\numberwithin{equation}{section}
\newcommand{\eps}{\varepsilon}
\newcommand{\diag}{\mathop{\mathrm{diag}}}
\newcommand{\E}{\mathsf{E}}
\newtheorem{lemma}{Lemma}[section]
\newtheorem{theorem}{Theorem}[section]
\newtheorem{remark}{Remark}[section]
\newcommand{\Pp}{\mathsf{P}}
\newcommand{\Pc}{\mathcal{P}}
\newcommand{\R}{\mathbb{R}}
\newcommand{\N}{\mathbb{N}}
\newcommand{\X}{\mathbb{X}}
\newcommand{\sgn}{\mathop{\rm sgn}}
\newcommand{\Law}{\mathop{\rm Law}}
\newcommand{\ONE}{{\bf 1}}
\newcommand{\Span}{\mathop{\rm span}}
\newcommand{\kap}{\varkappa}
\newcommand{\Cc}{\mathcal{C}}
\newcommand{\Wc}{\mathcal{W}}
\newcommand{\In}{\mathrm{In}}
\newcommand{\Out}{\mathrm{Out}}
\newcommand{\inin}{\mathrm{in}}
\newcommand{\out}{\mathrm{out}}
\newcommand{\z}{\mathbf{z}}
\author{
  Yuri Bakhtin
}
\address{School of Mathematics, Georgia Tech, Atlanta GA, 30332-0160, USA}
\email{bakhtin@math.gatech.edu}
\title{Noisy heteroclinic networks}
\begin{document}
\begin{abstract}We consider a white noise perturbation of dynamics in the neighborhood 
of a heteroclinic network. We show that
under the logarithmic time rescaling the diffusion converges in distribution in a special topology
to a piecewise constant process that jumps between saddle points along the heteroclinic orbits of the
network. We also obtain precise asymptotics for the exit measure for a domain containing the starting point
of the diffusion.
\end{abstract}
\maketitle

\section{Introduction}

In this note, we study small noise perturbations of a smooth continuous time dynamical system in the neighborhood of
its heteroclinic network.

The deterministic dynamics is defined
on $\R^d$ as the flow $(S^t)_{t\in\R}$ generated by a smooth vector field $b:\R^d\to\R^d$, i.e. $S^tx_0$ is the
solution of the initial-value problem
\begin{align}
\dot x(t)&=b(x(t)),\label{eq:ode}\\
x(0)&=x_0.\notag
\end{align}
We assume that the vector field $b$ generates a heteroclinic network, that is a set of isolated critical points connected
by heteroclinic orbits of the flow~$S$. 
Heteroclinic orbits arise naturally in systems with symmetries. Moreover, they are often robust under perturbations of the system
preserving the symmetries,
see the survey \cite{Krupa:MR1437986} and references therein, for numerous examples and 
a discussion of mechanisms of robustness.

We consider the system~\eqref{eq:ode} perturbed by uniformly elliptic noise:
\begin{align}
\label{eq:sde}
dX_\eps(t)&=b(X_\eps(t))dt +\eps  \sigma(X_\eps(t))dW(t),\\
X_\eps(0)&=x_0,\notag
\end{align}
where $W$ is a standard $d$-dimensional Wiener process, $\sigma(x)$ is a nondegenerate 
matrix of diffusion coefficients for every $x$, and $\eps>0$ is a small number. The initial point $x_0$ is assumed to belong
to one of the heteroclinic orbits.

Our principal result on the vanishing noise intensity asymptotics can be informally stated as follows.

{\bf Theorem.\ }{\it Under some technical nondegeneracy assumptions, as $\eps\to0$, the process $(X_\eps(t\ln(\eps^{-1})))_{t\ge 0}$ converges in distribution in an appropriate topology
to a process that spends all the time on the set of critical points and jumps instantaneously between them along the
heteroclinic trajectories.}

In fact, we shall provide much more detailed information on the limiting process and describe its distribution precisely. Thus, our
result provides a unified and mathematically rigorous background for the existing phenomenological studies, see e.g.~\cite{Armbruster--Stone--Kirk}. In particular, we shall see that in many cases the limiting process is not Markov. 
The precise description of the limiting process allows to obtain asymptotics of the exit distribution for domains
containing the starting point. This asymptotic result is of a different kind than the one provided by the classical Freidlin--Wentzell (FW)
theory, see~\cite{FW:MR1652127}. In fact, it allows to compute precisely the limiting probabilities of specific exit points 
that are indistinguishable from the point of view of the FW quasi-potential.

To prove the main result, we have to trace the evolution of the process along the heteroclinic orbits and in the neighborhood
of hyperbolic critical points. The latter was studied in \cite{Kifer} and \cite{Bakhtin-SPA:MR2411523}, where it was
demonstrated that in the vanishing diffusion limit, of all possible directions in the unstable manifold the system
chooses
to evolve along the invariant curve associated to the highest eigenvalue of the linearization of the system at the critical point, and the
asymptotics for the exit time was obtained. However, these results are not sufficient for the derivation of our main theorem, and more detailed analysis is required.

The paper is organized as follows. In Section~\ref{sec:example}, we study a simple example of a
heteroclinic network. In Section~\ref{sec:2d-linear-nonrigorous}, we give non-rigorous analysis of the behavior of
the diffusion in the neighborhood of one saddle point.
 The general
setting and the main result on weak convergence are given in Section~\ref{sec:setting}. To state the
 result we need to define in what sense the convergence is understood. Therefore, we begin our exposition in 
Section~\ref{sec:convergence_of_graphs}
 with a brief description of the relevant metric space, postponing  the proofs of all technical statements concerning
the metric space till Section~\ref{sec:basics-on-paths}. 
 In Section~\ref{sec:exit}, we state a result on the exit asymptotics and derive it from the main result
 of Section~\ref{sec:setting}. In Section~\ref{sec:main_proof}, we give the statement of the central technical lemma and
 use it to prove the main result. The proof of the central lemma is split into two parts. They are given in Section~\ref{sec:one_saddle} and 
 Section~\ref{sec:along_heteroclinic}, respectively. 
 Proofs of some auxiliary statements used in Section~\ref{sec:one_saddle}, are given in Section~\ref{sec:proof_tau_to_infty}.
 Section~\ref{sec:discussion} is devoted to an informal discussion of implications of our results and their extensions.

{\bf Acknowledgments.}  I started to think about heteroclinic networks in the Fall of 2006, after I attended a talk on their 
applications to neural dynamics by Valentin Afraimovich (see his paper~\cite{rabinovich:188103}). I am grateful to him for several stimulating discussions this work began with,
as well as for his words of encouragement without which it could not have been finished. I am also grateful to NSF for
their support via a CAREER grant.

\section{A simple example}\label{sec:example}
Here we recall a simple example of a heteroclinic network described in~\cite{Krupa:MR1437986}.  If $a_1<0$, the
deterministic cubic system defined by the drift of the following stochastic system
\begin{align}
dX_{\eps,1}&=X_{1,\eps}(1+a_1X_{1,\eps}^2+a_2X_{\eps,2}^2+a_3X_{\eps,3}^2)dt+\eps dW_1,\notag \\
dX_{\eps,2}&=X_{2,\eps}(1+a_1X_{\eps,2}^2+a_2X_{\eps,3}^2+a_3X_{\eps,1}^2)dt+\eps dW_2,\label{eq:example_system}\\
dX_{\eps,3}&=X_{3,\eps}(1+a_1X_{\eps,3}^2+a_2X_{\eps,1}^2+a_3X_{\eps,2}^2)dt+\eps dW_3 \notag
\end{align}
has 6 critical points 
\begin{align*}
z^{\pm}_1=(\pm\sqrt{-1/a_1},0,0),\\
z^{\pm}_2=(0,\pm\sqrt{-1/a_1},0 ),\\
z^{\pm}_3=(0,0,\pm\sqrt{-1/a_1}).
\end{align*}
The matrix of the linearization of the system at $z^+_1$ is given
by
\[\diag\left(-2,\frac{a_1-a_3}{a_1},\frac{a_1-a_2}{a_1}\right),\] and the linearizations at all other critical
points are can be obtained from it using the symmetries of the system. We see that if
\begin{equation}
 \label{eq:saddle_conditions}
 a_3>a_1\quad\text{and}\quad a_2<a_1,
\end{equation}
then all the critical points are saddles with one unstable direction corresponding to the eigenvalue
$(a_1-a_3)/a_1$. It is shown in~\cite{Krupa:MR1437986} that system~\eqref{eq:example_system} admits 12
orbits connecting $z_1^\pm$ to $z_2^\pm$, $z_2^\pm$ to $z_3^\pm$, and $z_3^\pm$ to $z_1^\pm$. Each of these orbits lies
entirely in one of the coordinate planes.

Let us equip~\eqref{eq:example_system}  with an initial condition $x_0$ on one of the 12 heteroclinic connections, say,
on the one connecting $z_1^+$ to $z_2^+$ denoted by $z_1^+\to z_2^+$.  

The theory developed in this paper allows to describe precisely the limiting behavior of the process as $\eps\to0$.
Namely, the process $X_\eps$ will stay close to the heteroclinic network, moving mostly
along the heteroclinic connections between the saddle points. At each saddle point it spends a time of order  
$\ln(\eps^{-1})$. More precisely,
the process $Z_\eps$ defined by $Z_\eps(t)=X_\eps(t\ln(\eps^{-1}))$ converges to a process
that jumps from $x_0$ to $z_2^+$ instantaneously along the heteroclinic connection $z_1^+\to z_2^+$, sits at $z_2^+$
for some time, then chooses one of the orbits $z_2^+\to z_3^+$ and $z_2^+\to z_3^-$, and jumps {\it along that
orbit}
instantaneously, spends some time in the endpoint of that orbit and then chooses a new outgoing orbit to follow,
etc.
However, the details of the process depend crucially on the eigenvalues of the linearization at the critical point.

 If at each saddle point the contraction is stronger than expansion, i.e.,\ if
\[
 \frac{a_1-a_2}{a_1}<-\frac{a_1-a_3}{a_1},
\]
which is equivalent to $a_2+a_3<2a_1$, then the system exhibits loss of memory and the sequence of saddle points
visited by the limiting process is a standard random walk on the directed graph formed by the network, i.e.\ a Markov
chain
on saddle points that at each point chooses to jump along one of the two possible outgoing connections with probability $1/2$.
 
If the expansion is stronger than the contraction, i.e., $a_2+a_3>2a_1$, then at $z_2^+$, the first saddle point
visited, the process still chooses each of the two possible next heteroclinic connections with probability $1/2$.
However, if it chooses $z_2^+\to z_3^+$, then it will cycle through $z_1^+,z_2^+,z_3^+$ and never visit any other
critical points. If it chooses $z_2^+\to z_3^-$, then it will cycle through $z_1^+,z_2^+,z_3^-$ and never visit any
other critical points.
This situation is strongly non-Markovian, because the choice the system makes at $z_2^+$ at any time is determined by
its choice during the first visit to that saddle point.

The case where $a_2+a_3=2a_1$ combines certain features of the situations described above. The limiting random saddle point 
sequence
explores all heteroclinic connections, but it makes asymmetric choices determined by its the history, also producing
non-Markov dynamics.

Besides the probability structure of limiting saddle point sequences, our main result also provides a
description of the random times spent by the process at each of the saddles it visits.

\section{Nonrigorous analysis of a linear system}\label{sec:2d-linear-nonrigorous}
In this section we consider the diffusion near a saddle point in the simplest possible case:
\begin{align*}
 dX_{\eps,1}&=\lambda_1 X_{\eps,1}dt+\eps dW_1,\\
 dX_{\eps,2}&=\lambda_2 X_{\eps,2}dt+\eps dW_2
\end{align*}
with initial conditions $X_{\eps,1}=0$, $X_{\eps,2}=1$. Here $\lambda_1>0>\lambda_2$, and $W_1,W_2$ are i.i.d.\
standard Wiener processes.

Let us study the exit distribution of $X_\eps$ for the strip $\{(x_1,x_2): |x_1|\le 1\}$. In other words, we are
interested in the distribution of $X_\eps(\tau_\eps)$, where
$\tau_\eps=\inf\{t:|X_{\eps,1}(t)|=1\}$.
Duhamel's principle implies
\[
 X_{\eps,1}(t)=\eps e^{\lambda_1t}\int_0^t e^{-\lambda_1s} dW_1(s).
\]
The integral in the r.h.s.\ converges a.s.\ to a centered Gaussian r.v.~$N_1$, so that for large $t$,
\[
 |X_{\eps,1}(t)|\approx\eps e^{\lambda_1t} N_1.
\]
Therefore, for the exit time $\tau_\eps$, we have to solve
\[
 1\approx\eps e^{\lambda_1t} |N_1|,
\]
so that
\[
 \tau_\eps\approx\frac{1}{\lambda_1}\ln(\eps |N_1|)^{-1}.
\]
So, the time spent by the diffusion in the neighborhood of the saddle point is about  $\lambda^{-1}\ln(\eps^{-1})$.

On the other hand,
\[
X_{\eps,2}(t)=e^{\lambda_2t} + \eps\int_0^t e^{\lambda_2(t-s)} dW_2(s). 
\]
and the integral in the r.h.s.\ converges in distribution to $N_2$, a centered Gaussian r.v.
Plugging the expression for $\tau_\eps$ into this relation, we get
\[
X_{\eps,2}(\tau_\eps)\approx\eps^{-\lambda_2/\lambda_1}|N_1|^{-\lambda_2/\lambda_1}+\eps N_2
\approx\begin{cases}
        \eps N_2,& \lambda_2<-\lambda_1,\\
        \eps^{-\lambda_2/\lambda_1}|N_1|^{-\lambda_2/\lambda_1},& \lambda_2>-\lambda_1,\\
        \eps(|N_1|+N_2),&  \lambda_2=-\lambda_1.
       \end{cases}
\]

Therefore, when the contraction is stronger than expansion $(\lambda_2<-\lambda_1)$, the limiting exit distribution is
centered Gaussian. In the opposite case $(\lambda_2>-\lambda_1)$, the limiting exit distribution is strongly asymmetric 
and concentrated on the positive semiline reflecting the fact that the initial condition $X_{\eps,2}(0)=1$ was positive
and presenting a strong memory effect. In the intermediate case $(-\lambda_2=\lambda_1)$, the limit is the distribution
of the sum of a symmetric r.v.\ $N_2$ and a positive r.v.\ $|N_1|$, and the resulting asymmetry also serves as a basis
for a certain memory effect.

In general, the asymmetry in the exit distribution means that at the next visited saddle point the choices of the exit direction
will not be symmetric, thus leading to non-Markovian dynamics. Notice that three types of behavior for the linear system that we just
derived correspond to the three types of cycling through the saddle points in the example of
Section~\ref{sec:example}.

One of the main goals of this paper is the precise mathematical meaning of the approximate identities of this section and their generalizations to multiplicative white noise perturbations of nonlinear dynamics in higher dimensions.

\section{Convergence of graphs in space-time}\label{sec:convergence_of_graphs}

The main result of this paper states the convergence of a family of continuous processes to a process with jumps.
This kind of convergence is impossible in the traditionally used Skorokhod topology on the space $D$
of processes with left and right limits, since the set of all
continuous functions is closed in the Skorokhod topology, see~\cite{Billingsley:MR1700749}. 

In this section we replace $D$ by another extension
of the space of continuous functions. This extension allows to describe not only the fact of an instantaneous jump
from one point to another, but also the curve along which the jump is made. We shall also introduce an appropriate topology
to characterize the convergence in this new space.  

It is interesting that in his classical paper~\cite{Skorohod:MR0084897} Skorokhod introduced several topologies for trajectories with jumps. 
Only one of them is widely known as the Skorokhod topology now. However, the construction that we are going to describe here did not appear either
in~\cite{Skorohod:MR0084897}, or anywhere else, to the best of our knowledge, at least in the literature on stochastic processes.

We consider all continuous functions (``paths'') 
\[
\gamma:[0,1]\to[0,\infty)\times\R^d
\]
such that $\gamma^0(s)$ is nondecreasing in $s$.
Here (and often in this paper) we use superscripts
to denote coordinates: $\gamma=(\gamma^0,\gamma^1,\ldots,\gamma^d)$.

We say that two paths $\gamma_1$ and $\gamma_2$
are equivalent, and write $\gamma_1\sim\gamma_2$ if there is a path $\gamma^*$ and nondecreasing surjective functions
$\lambda_1,\lambda_2:[0,1]\to[0,1]$ with 
$\gamma_1(s)=\gamma^*\circ \lambda_1(s)$ and $\gamma_2(s)=\gamma^*\circ\lambda_2(s)$ for all $s\in[0,1]$,
where $\circ$ means the composition of two functions.
(These are essentially reparametrizations of the path $\gamma^*$ except that we allow $\lambda_1,\lambda_2$
to be not strictly monotone.) In 
Section~\ref{sec:basics-on-paths} we shall prove the following statement:

\begin{lemma}\label{lm:equivalence} The relation $\sim$ on paths is a well-defined equivalence relation.
\end{lemma}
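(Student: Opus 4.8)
The plan is to verify the three defining properties of an equivalence relation: reflexivity, symmetry, and transitivity, where the only nontrivial one is transitivity.

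Reflexivity is immediate: given any path $\gamma$, take $\gamma^* = \gamma$ and $\lambda_1 = \lambda_2 = \mathrm{id}_{[0,1]}$, the identity map, which is nondecreasing and surjective. Symmetry is built into the definition, since the roles of $\gamma_1$ and $\gamma_2$ (and of $\lambda_1$ and $\lambda_2$) are interchangeable: the witnessing triple $(\gamma^*,\lambda_2,\lambda_1)$ shows $\gamma_2 \sim \gamma_1$ from the triple $(\gamma^*,\lambda_1,\lambda_2)$ showing $\gamma_1\sim\gamma_2$.

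The substance is transitivity. Suppose $\gamma_1\sim\gamma_2$ via $(\gamma^*,\lambda_1,\lambda_2)$ and $\gamma_2\sim\gamma_3$ via $(\tilde\gamma,\tilde\lambda_2,\tilde\lambda_3)$, so that $\gamma_1=\gamma^*\circ\lambda_1$, $\gamma_2=\gamma^*\circ\lambda_2=\tilde\gamma\circ\tilde\lambda_2$, and $\gamma_3=\tilde\gamma\circ\tilde\lambda_3$. I need to produce a single path $\hat\gamma$ and nondecreasing surjections relating it to $\gamma_1$ and $\gamma_3$. The natural idea is that $\gamma^*$ and $\tilde\gamma$ are themselves two reparametrizations of a common "maximally unparametrized" path — the image curve traced with all constancy intervals collapsed — so I would first establish a normal-form lemma: every path $\gamma$ is equivalent to a canonical representative $\bar\gamma$ obtained by collapsing each maximal interval on which $\gamma$ is constant to a point and rescaling, and moreover any two reparametrizations of the same curve have the same canonical representative. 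Concretely, since $\gamma_2 = \gamma^*\circ\lambda_2 = \tilde\gamma\circ\tilde\lambda_2$ expresses $\gamma_2$ two ways, one shows $\gamma^*$ and $\tilde\gamma$ share a common canonical form $\hat\gamma$, i.e. $\gamma^* = \hat\gamma\circ\mu$ and $\tilde\gamma = \hat\gamma\circ\tilde\mu$ for nondecreasing surjections $\mu,\tilde\mu$; then $\gamma_1 = \hat\gamma\circ(\mu\circ\lambda_1)$ and $\gamma_3 = \hat\gamma\circ(\tilde\mu\circ\tilde\lambda_3)$, and compositions of nondecreasing surjections $[0,1]\to[0,1]$ are again nondecreasing surjections, which completes the argument.

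The main obstacle is making the "collapse constancy intervals" construction precise and checking it is well defined: one must show that for a continuous $\gamma:[0,1]\to[0,\infty)\times\R^d$ the equivalence "$s\approx t$ iff $\gamma$ is constant on $[s\wedge t, s\vee t]$" partitions $[0,1]$ into closed intervals, that the quotient $[0,1]/\!\approx$ is order-isomorphic (via a continuous nondecreasing surjection) to $[0,1]$ — here I would use that the quotient is a compact connected linearly ordered space with countably many "jumps," hence order-homeomorphic to $[0,1]$ — and that the induced map $\hat\gamma$ on the quotient is continuous and has no nondegenerate constancy intervals, so it is genuinely canonical. The fact that $\gamma^0$ is nondecreasing is a convenient auxiliary but not essential; what matters is that two paths with the same canonical representative are exactly those that are reparametrizations of one another, which is the ``uniqueness'' half of the normal-form lemma and is where the care is needed. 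Once this normal-form lemma is in hand, transitivity, and hence the whole statement, follows formally.
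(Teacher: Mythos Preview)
Your plan is correct and rests on the same core idea as the paper's proof: factor out constancy intervals to obtain a path with no flat stretches, then use this as the common $\gamma^*$. The paper's Lemma~\ref{lm:factor_of_no_constancy} is exactly your ``normal-form lemma'': it constructs, for any nonconstant path $\gamma$, a path $\gamma'$ with no intervals of constancy and a time change $\lambda$ with $\gamma=\gamma'\circ\lambda$, via a Cantor-staircase-type construction.

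The one organizational difference is worth noting. You propose to take canonical forms of $\gamma^*$ and $\tilde\gamma$ separately and then argue they coincide (your ``uniqueness half''), which indeed requires some care. The paper sidesteps this by applying the no-constancy factorization directly to the \emph{middle} path $\gamma_2$: writing $\gamma_2=\gamma'\circ\lambda$ with $\gamma'$ constancy-free, one observes that the constancy intervals of $\lambda_2$ (resp.\ $\tilde\lambda_2$) are contained in those of $\lambda$, so by an elementary lemma (Lemma~\ref{lm:nested_segments}) the compositions $\lambda\circ\lambda_2^{-1}$ and $\lambda\circ\tilde\lambda_2^{-1}$ are genuine time changes, whence $\gamma^*=\gamma'\circ(\lambda\circ\lambda_2^{-1})$ and $\tilde\gamma=\gamma'\circ(\lambda\circ\tilde\lambda_2^{-1})$ directly. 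This avoids having to prove that two constancy-free representatives of the same curve are related by a bijection, and replaces your quotient-space argument by an explicit construction. Both routes work; the paper's is a bit shorter because it never needs the full uniqueness statement you flag as the delicate point.
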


Any non-empty class of equivalent paths will be called a curve. 
We denote the set of all curves by~$\X$.
Our choice of the equivalence relation ensures that each curve
is a closed set in sup-norm (see Section~\ref{sec:basics-on-paths}), and we
shall be able to introduce a metric on $\X$ induced by the sup-norm.

Since each curve in $\X$ is nondecreasing in the zeroth coordinate which plays the role of time, it
can be thought of as the graph of a function from~$[0,T]$ to~$\R^d$ for some nonnegative~$T$. However, any value $t\in[0,T]$ can be attained by a path's
``time'' coordinate 
for a whole interval of values of the variable parametrizing the curve, thus defining a curve in $\{t\}\times\R^d$, which is interpreted as the curve along which the jump at time $t$ is made.

We would like to introduce a distance in $\X$ that would be sensitive to the geometry of jump curves, but not to their
parametrization. So, for
two curves $\Gamma_1,\Gamma_2\in\X$, we denote
\begin{equation}
\label{eq:def-distance}
\rho(\Gamma_1,\Gamma_2)=\inf_{\gamma_1\in\Gamma_1, \gamma_2\in\Gamma_2} \sup_{s\in[0,1]}|\gamma_1(s)-\gamma_2(s)|,
\end{equation}
where $|\cdot|$ denotes the Euclidean norm in $[0,T]\times\R^d$.

\begin{theorem}\label{thm:metric_space}
\begin{enumerate}
\item The function $\rho$ defined above is a metric on $\X$.
\item The space $(\X,\rho)$ is Polish (i.e. complete and separable).
\end{enumerate}
\end{theorem}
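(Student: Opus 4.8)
The plan is to verify the metric axioms first, then separability, then completeness, in that order. For the metric axioms: symmetry is immediate from the definition; the triangle inequality is the standard argument for infimal-reparametrization pseudometrics. Given $\Gamma_1,\Gamma_2,\Gamma_3$, pick near-optimal representatives $\gamma_1,\gamma_2$ realizing $\rho(\Gamma_1,\Gamma_2)$ up to $\eps$, and $\gamma_2',\gamma_3$ realizing $\rho(\Gamma_2,\Gamma_3)$ up to $\eps$; the subtlety is that $\gamma_2$ and $\gamma_2'$ need not coincide. Here I would invoke Lemma 1.1 and the characterization of equivalence to produce a common representative: reparametrize so that $\gamma_2$ and $\gamma_2'$ become the \emph{same} path (pulling $\gamma_1$ and $\gamma_3$ along via the respective $\lambda$'s), possibly after passing to a further common refinement $\gamma_2^*$ of both using the surjective nondecreasing maps. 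Then the ordinary sup-norm triangle inequality on $[0,1]$ gives the result. The genuinely nontrivial axiom is $\rho(\Gamma_1,\Gamma_2)=0\implies\Gamma_1=\Gamma_2$: the infimum over reparametrizations need not be attained, so I must show that if there are representatives $\gamma_1^{(n)},\gamma_2^{(n)}$ with $\sup_s|\gamma_1^{(n)}(s)-\gamma_2^{(n)}(s)|\to 0$, then $\Gamma_1$ and $\Gamma_2$ contain a common path. This is where I would use the stated fact that each curve is closed in sup-norm: the image sets (as subsets of $[0,\infty)\times\R^d$) of $\gamma_1^{(n)}$ and $\gamma_2^{(n)}$ are forced together in Hausdorff distance, and because these images are precisely the monotone-in-time graphs underlying $\Gamma_1,\Gamma_2$, closedness plus monotonicity should let me conclude the two graphs coincide as sets, hence (again via the closedness/equivalence structure) $\Gamma_1=\Gamma_2$.

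For separability, I would exhibit a countable dense set explicitly: take piecewise-linear paths $[0,1]\to[0,\infty)\times\R^d$ with rational breakpoints and rational vertices, subject to the zeroth coordinate being nondecreasing, and pass to their equivalence classes. Given an arbitrary curve $\Gamma$ and any $\eps>0$, fix a representative $\gamma$; by uniform continuity of $\gamma$ on $[0,1]$ one approximates $\gamma$ uniformly within $\eps$ by such a rational piecewise-linear path $\tilde\gamma$ (one must check the approximation can be taken with $\tilde\gamma^0$ still nondecreasing, which is easy since monotone continuous functions are uniformly approximable by monotone piecewise-linear ones), and then $\rho(\Gamma,[\tilde\gamma])\le\sup_s|\gamma(s)-\tilde\gamma(s)|<\eps$.

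For completeness, let $(\Gamma_n)$ be $\rho$-Cauchy. Passing to a subsequence I may assume $\rho(\Gamma_n,\Gamma_{n+1})<2^{-n}$. The strategy is to build compatible representatives: choose $\gamma_1\in\Gamma_1$ arbitrarily, and inductively, having chosen $\gamma_n\in\Gamma_n$, use the near-optimality in the definition of $\rho$ together with the reparametrization freedom (Lemma 1.1) to choose $\gamma_{n+1}\in\Gamma_{n+1}$ with $\sup_s|\gamma_n(s)-\gamma_{n+1}(s)|<2^{-n+1}$; the point is that one can re-choose $\gamma_{n+1}$ within its class without disturbing the already-fixed $\gamma_n$, because re-parametrizing $\gamma_{n+1}$ does not change $\gamma_n$. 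Then $(\gamma_n)$ is uniformly Cauchy in $C([0,1],[0,\infty)\times\R^d)$, hence converges uniformly to some continuous $\gamma_\infty$; its zeroth coordinate is nondecreasing as a uniform limit of nondecreasing functions, so $\gamma_\infty$ is a legitimate path and $\Gamma_\infty:=[\gamma_\infty]\in\X$. Finally $\rho(\Gamma_n,\Gamma_\infty)\le\sup_s|\gamma_n(s)-\gamma_\infty(s)|\to 0$, giving convergence of the original Cauchy sequence.

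I expect the main obstacle to be the definiteness axiom $\rho=0\Rightarrow$ equality, because it is precisely the place where the non-attainment of the infimum bites and where one must convert Hausdorff-closeness of the underlying graphs into genuine equality of equivalence classes; the careful bookkeeping of simultaneous reparametrizations in the triangle inequality and in the completeness step is routine by comparison, and will rely on the composition/refinement structure already packaged in Lemma~\ref{lm:equivalence}.
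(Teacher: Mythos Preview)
Your overall architecture matches the paper's: verify the metric axioms, then separability, then completeness via a fast Cauchy subsequence with compatible representatives. The separability and completeness sketches are essentially what the paper does. But two of your metric-axiom arguments have real problems, and both are fixed in the paper by a device you do not mention: passing to representatives with no intervals of constancy.

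\textbf{Definiteness.} Your proposed route for $\rho(\Gamma_1,\Gamma_2)=0\Rightarrow\Gamma_1=\Gamma_2$ goes through Hausdorff distance of image sets: you argue the images of $\gamma_1^{(n)}$ and $\gamma_2^{(n)}$ are forced together, hence the underlying ``graphs'' coincide, hence $\Gamma_1=\Gamma_2$. This does not work, because the image set in $[0,\infty)\times\R^d$ does \emph{not} determine the equivalence class. The zeroth coordinate is monotone, but the spatial coordinates are not: for instance with $d=1$ the paths $s\mapsto(0,\,0\to1\to-1\to0)$ and $s\mapsto(0,\,0\to-1\to1\to0)$ have the same image $\{0\}\times[-1,1]$ and the same endpoints but are not equivalent (any common $\gamma^*$ would have to go both up-first and down-first under nondecreasing reparametrizations). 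So ``images coincide $\Rightarrow$ curves coincide'' is false. The paper instead fixes a single representative $\tilde\gamma_1\in\Gamma_1$ with no constancy intervals, reparametrizes the $\Gamma_2$-side (using that between two no-constancy representatives the time change is a \emph{bijection}) to get $\tilde\gamma_{2,n}\in\Gamma_2$ with $|\tilde\gamma_1-\tilde\gamma_{2,n}|_\infty\to0$, and then invokes the closedness of $\Gamma_2$ in sup-norm to conclude $\tilde\gamma_1\in\Gamma_2$.

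\textbf{Triangle inequality and the inductive step in completeness.} In both places you need, given a fixed $\gamma\in\Gamma$ and some other $\gamma',\gamma''\in\Gamma$ close to paths in neighbouring classes, to transport that closeness back to $\gamma$. Your ``common refinement $\gamma_2^*$'' idea asks to pull $\gamma_1$ and $\gamma_3$ back along the surjections $\mu,\mu'$ realizing $\gamma_2=\gamma_2^*\circ\mu$, $\gamma_2'=\gamma_2^*\circ\mu'$; but $\gamma_1$ is not a reparametrization of anything in $\Gamma_2$, and pushing forward along a non-injective $\mu$ is not defined. One can make a fiber-product argument work (find time changes $\nu,\nu'$ with $\mu\circ\nu=\mu'\circ\nu'$ and compare $\gamma_1\circ\nu$ with $\gamma_3\circ\nu'$), but you have not said this. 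The paper avoids the issue entirely: it first shows that every nonconstant curve has representatives with no constancy intervals, that any two such representatives differ by a \emph{bijective} time change, and that the infimum defining $\rho$ can be restricted to such representatives. With bijective time changes the transport is trivial, and both the triangle inequality and the inductive choice of $\gamma_{n+1}$ go through in one line.
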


We postpone the proof of this statement to Section~\ref{sec:basics-on-paths}.

Naturally, any continuous function $f:[0,T]\to\R^d$ defines a path $\gamma_f$ by
\begin{equation}
\label{eq:gamma_f}
\gamma_f(t)=\left(tT,f^1(tT),f^2(tT),\ldots,f^d(tT)\right),\quad t\in[0,1],
\end{equation}
and a curve $\Gamma_f$ that is the equivalence class of $\gamma_f$.

The following result shows that the convergence of continuous functions in sup-norm is consistent
with convergence of the associated curves in metric~$\rho$.

\begin{lemma} \label{lm:graph_coverge_if_continuous_function_converge}
Let $(f_n)_{n\in\N}$ and $g$ be continuous functions on~$[0,T]$ for some $T>0$. Then
\[\sup_{t\in[0,T]}|f_n(t)- g(t)|\to0,\quad n\to\infty\] is necessary and sufficient for 
\[\rho(\Gamma_{f_n},\Gamma_g)\to 0,\quad n\to\infty.\]
\end{lemma}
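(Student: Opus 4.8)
The plan is to prove both implications by passing through a single well-chosen representative path. For the forward direction, suppose $\sup_{t\in[0,T]}|f_n(t)-g(t)|\to 0$. Recall from \eqref{eq:gamma_f} that $\gamma_{f_n}(t)=(tT,f_n(tT),\ldots)$ and $\gamma_g(t)=(tT,g(tT),\ldots)$ are themselves legitimate representatives of $\Gamma_{f_n}$ and $\Gamma_g$ respectively, and they use the \emph{same} parametrization. Hence, directly from the definition \eqref{eq:def-distance} of $\rho$ as an infimum over representatives,
\[
\rho(\Gamma_{f_n},\Gamma_g)\le \sup_{s\in[0,1]}|\gamma_{f_n}(s)-\gamma_g(s)| = \sup_{t\in[0,T]}|f_n(tT)-g(tT)| = \sup_{t\in[0,T]}|f_n(t)-g(t)|,
\]
where the Euclidean norm on $[0,T]\times\R^d$ is used and the zeroth coordinates cancel. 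This tends to $0$, which is the forward implication. (One should note that the sup in \eqref{eq:def-distance} with $s$ ranging over $[0,1]$ and the sup in the lemma with $t$ ranging over $[0,T]$ match exactly after the rescaling $t=sT$.)

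The converse is the substantive direction. Assume $\rho(\Gamma_{f_n},\Gamma_g)\to 0$; we must recover uniform convergence of the functions themselves, and the issue is that a representative $\gamma_n\in\Gamma_{f_n}$ achieving (nearly) the infimum need not be the canonical one $\gamma_{f_n}$, so its zeroth coordinate can be any nondecreasing surjection of $[0,1]$ onto $[0,T]$ rather than the linear map $s\mapsto sT$. Here is where I expect the main obstacle. The key structural fact to exploit is that $g$ is a \emph{fixed} continuous function, hence uniformly continuous on $[0,T]$, with some modulus of continuity $\omega_g$. Given $\eta>0$, pick $n$ large so that $\rho(\Gamma_{f_n},\Gamma_g)<\delta$ and choose representatives $\gamma_1^{(n)}\in\Gamma_{f_n}$, $\gamma_2^{(n)}\in\Gamma_g$ with $\sup_{s}|\gamma_1^{(n)}(s)-\gamma_2^{(n)}(s)|<\delta$. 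In particular the zeroth coordinates satisfy $|(\gamma_1^{(n)})^0(s)-(\gamma_2^{(n)})^0(s)|<\delta$ for all $s$, so the two (possibly weird) time-reparametrizations are uniformly $\delta$-close to each other.

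The argument I would then run: fix $t\in[0,T]$ and choose $s\in[0,1]$ with $(\gamma_2^{(n)})^0(s)=t$ (possible since $(\gamma_2^{(n)})^0$ is surjective onto $[0,T]$). The spatial coordinates of $\gamma_2^{(n)}(s)$ then equal $g(t)$ up to the fact that $\gamma_2^{(n)}$ is a reparametrization of $\gamma_g$ — more precisely, by the definition of the equivalence relation there is a common path $\gamma^*$ with $\gamma_2^{(n)}=\gamma^*\circ\lambda_2$ and $\gamma_g=\gamma^*\circ\lambda_g$, and since the zeroth coordinate is nondecreasing, matching time values forces the spatial parts to coincide, giving $(\gamma_2^{(n)})^{\ge 1}(s)=g(t)$ (this uses that $g$'s graph, as a function, assigns a single spatial value to each time — there are no jumps in $\Gamma_g$). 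Then $|(\gamma_1^{(n)})^{\ge 1}(s)-g(t)|<\delta$, while $|(\gamma_1^{(n)})^0(s)-t|<\delta$; writing $u=(\gamma_1^{(n)})^0(s)$ and using that $\gamma_1^{(n)}$ is a reparametrization of $\gamma_{f_n}$, the spatial value $(\gamma_1^{(n)})^{\ge 1}(s)$ equals $f_n(u)$ for some $u$ with $|u-t|<\delta$ (again no jumps in $\Gamma_{f_n}$ since it comes from a function). Hence $|f_n(u)-g(t)|<\delta$ with $|u-t|<\delta$. To convert this into a bound on $|f_n(t)-g(t)|$ I would additionally need equicontinuity of $(f_n)$, which is \emph{not} assumed — so instead I run the symmetric argument to also get, for every $t$, a point near $t$ where $g$ is close to $f_n(t)$, and combine with uniform continuity of $g$ only: from $|f_n(u)-g(t)|<\delta$ and $|u-t|<\delta$ one gets $|f_n(u)-g(u)|<\delta+\omega_g(\delta)$, i.e. $f_n$ and $g$ are close at the point $u$, and letting $t$ (hence the associated $u=u(t)$) sweep $[0,T]$ one checks $u(t)$ also sweeps a set that, together with the constraint $|u-t|<\delta$ and surjectivity of the reparametrizations, covers $[0,T]$; combined with uniform continuity of $g$ this yields $\sup_{t}|f_n(t)-g(t)|\le \delta + 2\omega_g(\delta)$. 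Since $\delta\downarrow 0$ as $n\to\infty$ and $\omega_g(\delta)\to 0$, the converse follows. The one delicate point to write carefully is the claim that a representative whose graph equals that of a \emph{function} must, at any parameter value, take the spatial value dictated by that function at the corresponding time — this is exactly where "no jumps" enters and should be stated as a short lemma (it follows from the structure of the equivalence relation via the common path $\gamma^*$, and from the fact that the graph of a continuous function meets each vertical line $\{t\}\times\R^d$ in a single point).
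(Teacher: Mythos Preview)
Your forward direction is exactly what the paper does (it calls this part ``obvious''). For the converse you take a genuinely different route from the paper. The paper invokes Lemma~\ref{lm:basics-on-paths} to replace the two arbitrary representatives by $\gamma_g$ itself and some $\gamma_n\in\Gamma'_{f_n}$ with $|\gamma_n-\gamma_g|_\infty\to 0$; since $\gamma_n^0$ is then strictly increasing and uniformly close to $s\mapsto sT$, its inverse $u_n$ is a continuous near-identity reparametrization, and the identity $\gamma_n(u_n(s))=(sT,f_n(sT))$ gives $\sup_s|f_n(sT)-g(sT)|\to 0$ via uniform continuity of~$g$. Your approach bypasses the $\Gamma'$ machinery and works with arbitrary representatives plus the ``no-jumps'' property of a function graph; this is more elementary and self-contained, at the cost of having to justify the no-jumps step explicitly.

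One passage in your write-up is tangled: the ``sweeping'' argument is not clean as stated, because the map $t\mapsto u(t)$ depends on an arbitrary choice of $s(t)$ and need not be onto $[0,T]$. But you already have the correct replacement one sentence earlier, and should simply use it instead: for each $t\in[0,T]$ pick $s$ with $(\gamma_1^{(n)})^0(s)=t$ (surjectivity). By your no-jumps observation the spatial part of $\gamma_1^{(n)}(s)$ equals $f_n(t)$, while $\gamma_2^{(n)}(s)$ has time coordinate $v$ with $|v-t|<\delta$ and spatial part $g(v)$, so
\[
|f_n(t)-g(t)|\le |f_n(t)-g(v)|+|g(v)-g(t)|<\delta+\omega_g(\delta),
\]
uniformly in $t$; no sweeping is needed and the constant is $\delta+\omega_g(\delta)$. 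For the no-jumps step itself, the shortest justification is that $\gamma_g^0(s)=sT$ has no intervals of constancy, so by Lemma~\ref{lm:basics-on-paths} part~\ref{it:existence-of-reparametrization} every path in $\Gamma_g$ (and likewise in $\Gamma_{f_n}$) is of the form $\gamma_g\circ\lambda$, and then $\gamma^0(s)=t$ forces $\lambda(s)=t/T$, hence spatial part $=g(t)$. Your $\gamma^*$ argument also works once you note that the strict monotonicity of $\gamma_g^0$ forces $(\gamma^*)^0$ to be strictly increasing, so its level set at $t$ is a single point.
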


The proof of this lemma is also given in Section~\ref{sec:basics-on-paths}.  In fact, it can be extended to describe
the convergence of graphs of functions with varying domains.

We shall need the following notion in the statement of our main result.

An element $\Gamma$ of $\X$ is called {\it piecewise constant} if there is a path
$\gamma\in\Gamma$, a number $k\in\N$ and families of
numbers 
\[0=s_0\le s_1\le \ldots\le s_{2k}\le s_{2k+1}=1,\] 
\[0=t_0\le t_1\le\ldots\le t_{k-1}\le t_k
,\]
and points \[y_1,\ldots,y_k\in\R^d,\]
 such that $(\gamma^1(s),\ldots,\gamma^d(s))=y_j$ for  $s\in[s_{2j-1},s_{2j}]$,  $j=1,\ldots,k$, and
$\gamma^0(s)=t_j$  for  $s\in[s_{2j},s_{2j+1}]$, $j=0,\ldots,k$.
A piecewise constant $\Gamma$ describes a particle that sits at point $y_j$ between times
$t_{j-1}$ and $t_j$, and at time $t_j$ jumps 
{\it along the path $\gamma_j=(\gamma^1,\ldots,\gamma^d)|_{[s_{2j},s_{2j+1}]}$}.
It is natural to identify $\Gamma$ with a sequence of points and jumps, and we write
\begin{equation}
\label{eq:piecewise_constant}
\Gamma=(\gamma_0,y_1,\Delta t_1,\gamma_1,y_2,\Delta t_2,\gamma_2,\ldots,y_k,\Delta t_k,\gamma_k),
\end{equation} 
where $\Delta t_j=t_j-t_{j-1}$ denotes the time spent by the particle at point $y_j$.

\section{The setting and the main weak convergence result}
\label{sec:setting}

In this section we describe the setting and state the main result. The conditions of the setting and possible
generalizations are discussed in Section~\ref{sbsec:assumption_discussion}.

We assume that the vector field $b:\R^d\to\R^d$ is $C^2$-smooth, and the $d\times d$-matrix valued function $\sigma$ is
also
$C^2$. We assume that for each $x_0$, the  flow $S^tx_0$ associated to the 
system~\eqref{eq:ode} is well-defined for all $t\in\R$ (including negative values of $t$). We assume that $b$ admits a heteroclinic network of a special kind that
we proceed to describe.

We suppose that there is a finite or countable set of points $(z_i)_{i\in \Cc}$, where $\Cc=\N$ or $\Cc=\{1,\ldots,N\}$
for some $N\ge1$, with the following properties.
\begin{enumerate}
\item[(i)] For each $i\in\Cc$, there is a neighborhood $U_i$ of $z_i$ and a $d\times d$ matrix $A_i$
such that 
\[
b(x)=A_i(x-z_i)+Q_i(x),\quad x\in U_i, 
\]
where $|Q_i(x)|\le C_i|x-z_i|^2$, for a constant $C_i$ and every $x\in U_i$. In particular, $z_i$ is  a critical point for $b$,
since $b(z_i)=0$. 
Moreover, we require that $S^t$ is conjugated on $U_i$
to a linear dynamical system $\dot y=A_iy$ by a $C^2$-diffeomorphism $f_i$ satisfying $f_i(z_i)=0$. This means that 
for any $x_0\in U_i$,
there is $t_0=t_0(x_0)>0$ such that for all $t\in(-t_0,t_0)$,
\[
\frac{d}{dt} f_i(S^tx_0)=A_i f_i(S^tx_0).
\]
\item[(ii)] For each $i\in\Cc$, the eigenvalues $\lambda_{i,1},\ldots,\lambda_{i,d}$ of $A_i$ are real and simple,
we also assume that there is an integer $\nu_i$ with $2\le\nu_i\le d$ such that
\begin{equation}
\lambda_{i,1} >\ldots> \lambda_{i,\nu_i-1}>0>\lambda_{i,\nu_i}>\ldots>\lambda_{i,d}.
\label{eq:lambdas-i}
\end{equation}
\end{enumerate}

These requirements mean, in particular, that each $z_i$ is a hyperbolic fixed point (saddle) for the
dynamics. The Hartman--Grobman theorem (see Theorem~6.3.1 in~\cite{Katok--Hasselblatt}) guarantees the existence of a
homeomorphism conjugating the flow
generated by vector field $b$ to linear dynamics. Our condition (i) imposes a stronger requirement for this conjugation
to be $C^2$. This requirement is still often satisfied as follows from the Sternberg linearization theorem for
hyperbolic fixed points with no resonances, see 
Theorem~6.6.6 in~\cite{Katok--Hasselblatt}. In particular, the cubic system of Section~\ref{sec:example} is
$C^\infty$-conjugated to a linear system at each saddle point for typical values of its parameters.

We also want
to make several assumptions on orbits of the flow connecting these saddle points to each other.
First, we denote by $v_{i,1},\ldots,v_{i,d}$ the unit eigenvectors associated with the eigenvalues $\lambda_{i,1},\ldots,\lambda_{i,d}$. 
The hyperbolicity (and, even more straightforwardly, the conjugation to a linear flow) implies that
for every $i\in\Cc$ there is a $d-\nu+1$-dimensional $C^2$-manifold $\Wc_i^s$
containing $z_i$ such that $\lim_{t\to+\infty}S^tx= z_i$ for every $x\in\Wc_i^s$  (i.e.
$\Wc_i^s$ is the stable manifold associated to $z_i$.) For each $i$ the
unstable manifold of $z_i$ is also well-defined.
 
However, it is known, see \cite{Kifer} and \cite{Bakhtin-SPA:MR2411523}, that if the initial data for the stochastic
flow are close to
the stable manifold then after passing the saddle $z_i$ the solution evolves mostly along the invariant manifold
associated to the highest eigenvalue of $A_i$. So, what we  need is
the curve $\gamma_i\in C^2$ containing $z_i$,
tangent to $v_{i,1}$ at $z_i$, and invariant under the flow. Of course, the intersection of $\gamma_i$ 
with $U_i$ is well-defined and coincides with 
$f_i^{-1}(\Span(v_{i,1})\cap f_i(U_i))$.

For each $i\in\Cc$ we denote $g_i=f_i^{-1}$ and set
\[
q_i^{\pm}=g_i(\pm R_iv_{i,1}),
\] 
where the numbers $(R_i)_{i\in\Cc}$ are chosen so that 
\[
\tilde U_i=\left\{x:\ \max_{k=1,\ldots,d} |P^{i,k}f_i(x)|\le R_i\right\}\subset U_i,
\]
and these sets are mutually disjoint.
Here, for any $y\in\R^d$,  the number $P^{i,k}y$ is defined by
\[
y=\sum_{k=1}^d (P^{i,k}y) v_{i,k},
\]
and denotes the $k$-th coordinate of $y$ in the coordinate system defined by $v_{i,1},\ldots,v_{i,d}$.

 We denote the
orbits of $q^\pm_i$ by $\gamma_i^\pm$, and assume that for each~$i\in\Cc$, there are numbers~$n^\pm(i)\in\Cc$ such that
\[
\lim_{t\to\infty} S^tq^\pm_i=z_{n^\pm(i)}.
\]
This means that the curves $\gamma^{\pm}_i$ are heteroclinic orbits connecting the saddle point $z_i$ to
saddle points $z_{n^\pm(i)}$.
We do not prohibit these orbits to be homoclinic and connect $z_i$ to itself, i.e. the situations where $n^\pm(i)=i$ are allowed. 

For any $i\in\Cc$ we define 
\begin{align}
h^\pm_i&=\inf\{t:\ S^tq^\pm_i\in \tilde U_{n^\pm(i)}\},\label{eq:hpm}\\
x^\pm_i&=S^{h^\pm_i}q^\pm_i.\label{eq:xpm}
\end{align}
so that $h^\pm_i$ is the time it takes to travel from $q^\pm_i$ to the neighborhood of the next saddle, and $x^\pm_i$
is the point of entrance to that neighborhood.

Our first nondegeneracy assumption is that for all $i\in\Cc$, 
\begin{equation}
\label{eq:nondegeneracy_in_nu_coordinate}
P^{n^\pm(i),\nu_{n^\pm(i)}} f_{n^\pm(i)}(x^\pm_i)\ne 0,
\end{equation}
which means that each heteroclinic orbit $\gamma^{\pm}_i$ has a nontrivial component in the direction of
the $v_{n^\pm(i),\nu_{n^\pm(i)}}$ as  it approaches $z_{n^\pm(i)}$. Although this condition holds true for
all systems of interest (e.g., the system considered in Section~\ref{sec:example}), it is easy to adapt our reasoning to the situations where other components of
the projection of $f_{n^\pm(i)}(x^\pm_i)$ on the stable directions dominate.

We shall also need a nondegeneracy condition on the linearization of~\eqref{eq:ode} along $\gamma^{\pm}_i$. 
For each $x\in\R^d$ we consider the fundamental
matrix $\Phi_x(\cdot)$ solving the equation in variations along the orbit $(S^tx)_{t\ge 0}$:
\begin{align*}
\frac{d}{dt}\Phi_x(t)&=Db(S^tx)\Phi_x(t),\quad t\ge0,\\
\Phi_x(0)&=I.
\end{align*}
For all $i,j$ we denote 
\[
\bar v^{\pm}_{i,j}= \Phi_{q^{\pm}_i}(h_i^\pm) (Df_i(q_i^\pm))^{-1}v_{i,j}.
\]
The technical nondegeneracy assumption on $\Phi$ that we need is:
\begin{multline}
\label{eq:nondegeneracy_in_other_coordinates}
P^{n^\pm(i),k} Df_{n^\pm(i)}(x^\pm_i) \bar v^\pm_{i,j}\ne 0,\quad i\in\Cc,\ j\in\{2,\nu_i\},\ 
k=\begin{cases}1,2,&\nu_{n^\pm(i)}>2,\\
1,&\nu_{n^\pm(i)}=1.\end{cases}
\end{multline}
Again, we work with this condition since it holds true for any system of interest, but
it is easy to adapt our reasoning to the situations where
this condition is violated.

To formulate our main theorem we need a notion of an {\it entrance-exit map} describing
the limiting behavior of $X_\eps$ in the neighborhood of a saddle points, namely, the asymptotics
of the random entrance-exit Poincar\'e map as $\eps\to0$.

We denote  the set of all probability Borel measures on $\R^d$ by $\Pc(\R^d)$.

Let us denote by $\In_i$ the set of all triples $(x,\alpha,\mu)$ where
\begin{enumerate}
\item
$x\in\tilde U_i\cap \Wc^s_i$ 
satisfies $P^{\nu_i}_i f_i(x)\ne 0$;
\item $\alpha\in(0,1]$;
\item $\mu\in\Pc(\R^d)$ with
\[
\mu\{\phi:\ P^{1}_i Df_i(x)\phi\ne 0\}=1,\quad\text{if}\ \alpha<1,
\]
\[
\mu\{\phi:\ P^{2}_i Df_i(x)\phi\ne 0\}=1,\quad\text{if}\ \alpha<1\ \text{and}\ \nu_i>2.
\]
\end{enumerate}
This set will be used to describe the initial condition for equation~\eqref{eq:sde}:
$ X_\eps(0)=x+\eps^\alpha\phi_\eps,$
where the distribution of $\phi_\eps$ weakly converges to $\mu$ as $\eps\to 0$.

We also define
\[
\Out=\{(t,p,x,\beta, F):\ t\in(0,\infty),\  p\in[0,1],\ x\in\R^d,\ \beta\in(0,1], F\in \Pc(\R^d)\}
\]
and
\begin{multline*}
\widehat \Out_i=\{((t_-,p_-,x_-,\beta_-, F_-),(t_+,p_+,x_+,\beta_+, F_+))\in\Out^2:\\ t_-=t_+,\ x_\pm=x_{i}^{\pm},\ p_-+p_+=1,\  \beta_-=\beta_+\}.
\end{multline*}

Here, the numbers $p_\pm$ define the limiting probabilities of choosing each of the two branches of the invariant curve
associated with the highest eigenvalue of the linearization at the saddle point; $x_\pm$ are points on these orbits
that serve as entrance points to neighborhoods of the next saddle points; $t_\pm$ are the times it takes to reach
these
points under the proper (logarithmic) renormalization; $\beta$ is the scaling exponent so that the exit
distribution (serving as the entrance
distribution to the next saddle's neighborhood) takes the form $x_\pm+\eps^\beta\psi_\eps$, where the distribution of
$\psi_\eps$ converges to~$F_+$ or~$F_-$ depending on which of the two branches was chosen.

It is possible (see Lemma~\ref{lm:iteration_lemma}) to give a precise description of the asymptotic behavior of the
diffusion in the
neighborhood of each saddle point in terms of an appropriate entrance-exit map, i.e.\ a map that for each saddle point
computes a description of the exit parameters in terms of the entrance parameters: 
\[
\Psi_i:\In_i\to \widehat\Out_i,\quad i\in\Cc.
\]

For an entrance-exit map $(\Psi_i)_{i\in\Cc}=(\Psi_{i,-},\Psi_{i,+})_{i\in\Cc}$ we shall denote
its components by $t_i=t_{i,\pm}$, $p_{i,\pm}$, $x_{i,\pm}$, $\beta_i=\beta_{i,\pm}$,
$F_{\pm,i}$.

\medskip

Suppose $x_0$ belongs to one of heteroclinic orbits of the network. 
A sequence $\z=(\theta_0, z_{i_1},\theta_{1},z_{i_2},\theta_{2}\ldots,\theta_{k-1},z_{i_k},\theta_k)$
is called {\it admissible} for $x_0$ if 
\begin{enumerate}
\item $\theta_0$ is the positive orbit of $x_0$ with $\lim_{t\to\infty}S^tx_0=z_{i_1}$;
\item for each $j\in\{1,\ldots,k\}$, 
$\theta_{j}=\gamma_{i_j}^+$ or $\theta_{j}=\gamma_{i_j}^-$;
\item for each $j\in\{1,\ldots,k-1\},$ 
\[
i_{j+1}=
\begin{cases}
n^+(i_j),& \theta_{j}=\gamma_{i_j}^+,\\
n^-(i_j),& \theta_{j}=\gamma_{i_j}^-.
\end{cases}
\]
\end{enumerate}
The number $k=k(\z)$ is called the length of $\z$.

Our main limit theorem uses entrance-exit maps to assign limiting probabilities to admissible sequences. Let us proceed
to describe this procedure.

With each admissible sequence $\z$ we associate the following sequence:
\begin{equation}
\label{eq:associated_sequence}
\zeta(\z)=((\tilde x_0,\alpha_0,\mu_0),(t_1,p_1,\tilde x_1,\alpha_1,\mu_1),\ldots,(t_k,p_k,\tilde x_k,\alpha_k,\mu_k)).
\end{equation}
Here $\tilde x_0= S^{\tilde t(x_0)}x_0$, 
$\alpha_0=1$, and
\begin{equation}
\label{eq:mu_0}
\mu_0=\Law\left(\Phi_{x_0}(\tilde t(x_0))\int_0^{\tilde t(x_0)}\Phi_{x_0}^{-1}(s)\sigma(S^sx_0)dW(s)\right),
\end{equation}
where 
\begin{equation}
\tilde t(x_0)=\inf\{t\ge 0:\ S^tx_0\in \tilde U_{i_1}\}+1.
\label{eq:tilde_t}
\end{equation}
We add 1 in the r.h.s.\ so that the distribution $\mu_0$ is nondegenerate (and the maps
$\Psi_{i_1,\pm}(x_0,\alpha_0,\mu_0)$ are well-defined) even if $x_0\in\tilde U_{i_1}$.
All other entries in~\eqref{eq:associated_sequence} are obtained according to the following recursive procedure. 
For each~$j$, 
\begin{equation}
\label{eq:iteration}
(t_j,p_j,\tilde x_j,\alpha_j,\mu_j)
=\begin{cases}\Psi_{i_j,+}(\tilde x_{j-1},\alpha_{j-1},\mu_{j-1}),&\theta_{j}=\gamma_{i_j}^+\\
               \Psi_{i_j,-}(\tilde x_{j-1},\alpha_{j-1},\mu_{j-1}),&\theta_{j}=\gamma_{i_j}^-.
      \end{cases}
\end{equation}
The numbers $t_1=t_1(\z),\ldots,t_k=t_k(\z)$ defined above play the role of time, and the
admissible sequence $\z$ can be identified with a  piecewise constant trajectory $\Gamma(\z)\in\X$: 
\[
\Gamma(\z)=(\theta_0,z_{i_1},t_1,\theta_1,z_{i_2},t_2,\theta_2,\ldots,z_{i_k},t_k,\theta_{k}).
\]

The numbers $p_1=p_1(\z),\ldots,p_k=p_k(\z)$ defined in \eqref{eq:associated_sequence} play the roles of conditional probabilities,
and we denote
\begin{equation}
\label{eq:product_of_conditional_probabilities}
\pi(\z)=p_1(\z)p_2(\z)\ldots p_{k}(\z).
\end{equation}

The set of all admissible sequences for $x_0$ 
has the structure of a binary tree. The natural partial order on it is determined by inclusion.
 We say that
a set $L$ of admissible sequences for $x_0$ is {\it free}
 if no two sequences in $L$ are comparable with respect to this
partial order. If additionally, for any sequence not from $L$, it is comparable to one of the sequences from~$L$,
the set $L$ is called {\it complete}.
In the language of graph theory, a complete set is a section of the binary tree. 

It is clear that for any free set $L$, $\pi(L)\le 1$, where $\pi(L)=\sum_{\z\in L}\pi(\z)$.
A free set $L$ is called {\it conservative} if 
$\pi(L)= 1$.
Every complete set is finite and conservative.



We are ready to state our main result now.
\begin{theorem}\label{th:main}  Under the conditions stated above there is an
entrance-exit map $\Psi$ with the following property.

Let  $x_0$ belong to one of the heteroclinic orbits
of the network. For each $\eps>0$ define a stochastic process $Z_\eps$ by 
\begin{equation}
\label{eq:Z_eps_is_rescaled_X_eps}
Z_\eps(t)=X_\eps(t\ln(\eps^{-1})),\quad t\ge 0,
\end{equation}
where $X_\eps$ is the strong solution of~\eqref{eq:ode} with initial condition $X_\eps(0)=x_0$.

For any conservative set $L$ of $x_0$-admissible sequences, there is a family of stopping times $(T_\eps)_{\eps>0}$ such that
the distribution of the graph
$\Gamma_{Z_\eps(t),t\le T_\eps}$ converges weakly in $(\X,\rho)$ to the measure $M_{x_0,L}$ 
concentrated on the set
\[
\{\Gamma(\z)\ :\ \z\in L\},
\]
and satisfying
\begin{equation}
\label{eq:def_of_M}
M_{x_0,L}\{\Gamma(\z)\}=\pi(\z),\quad \z\in L,
\end{equation}
where $\pi(\z)$ is defined via $\Psi$ in~\eqref{eq:product_of_conditional_probabilities}.
\end{theorem}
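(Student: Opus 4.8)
The plan is to reduce the global statement to a finite concatenation of single-saddle analyses, glued along the deterministic heteroclinic orbits, and then to pass to the limit in the curve topology $(\X,\rho)$. First I would invoke the central iteration lemma (Lemma~\ref{lm:iteration_lemma}, referenced in the excerpt) which provides, for each saddle $z_i$, the entrance-exit map $\Psi_i:\In_i\to\widehat\Out_i$ and the key approximation statement: if $X_\eps$ enters $\tilde U_i$ at a point of the form $x+\eps^\alpha\phi_\eps$ with $\Law(\phi_\eps)\Rightarrow\mu$ and $(x,\alpha,\mu)\in\In_i$, then, conditionally on leaving through the $\pm$-branch (an event of asymptotic probability $p_{i,\pm}$), the rescaled exit time converges in distribution to $t_{i,\pm}$, the exit point has the form $x_i^\pm+\eps^{\beta_i}\psi_\eps$ with $\Law(\psi_\eps)\Rightarrow F_{i,\pm}$, and — crucially — the full rescaled path inside $\tilde U_i$ collapses (in $\rho$) onto the piecewise-constant curve that sits at $z_i$ for time $t_{i,\pm}$ and then jumps along $\gamma_i^\pm$. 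Between neighborhoods, on the compact time interval of length $h_i^\pm$ spent traversing the heteroclinic orbit from $q_i^\pm$ to $x_i^\pm$, standard Freidlin--Wentzell-type estimates (or simply Duhamel/Gronwall applied to the linearization, using $\Phi_{q_i^\pm}$) show that $X_\eps$ stays $\eps$-close to the deterministic orbit, so after the $\ln(\eps^{-1})$ rescaling this traversal becomes an instantaneous jump whose jump-curve is exactly $\gamma_i^\pm$; moreover the Gaussian perturbation picked up along the way is transported linearly by $\Phi_{q_i^\pm}(h_i^\pm)$, which is precisely how $\mu_0$ and the vectors $\bar v_{i,j}^\pm$ in the recursion were defined.

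Next I would set up an induction on the tree of admissible sequences. Fix the conservative set $L$; since $L$ is free, every complete set refining it is finite, and one can take a finite complete set $L'\supseteq$ (the truncations of) $L$ at some uniform depth $K=\max_{\z\in L}k(\z)$. The stopping time $T_\eps$ is chosen as the first time $X_\eps$ completes a number of saddle-passages equal to the length of the (random) admissible sequence it is following within $L$ — concretely, after the $\ln(\eps^{-1})$ rescaling, $T_\eps$ is the exit time from the $\ell(\z)$-th visited neighborhood, where $\z\in L$ is the realized branch. Iterating the single-saddle statement: the initial segment $\theta_0$ is handled by the deterministic flow plus the linearized noise, producing $(\tilde x_0,\alpha_0=1,\mu_0)$ exactly as in~\eqref{eq:mu_0}; then applying $\Psi_{i_1,\pm}$ gives the first branch choice with probabilities $p_1$, the first holding time $t_1$, and the entrance data $(\tilde x_1,\alpha_1,\mu_1)$ to the second neighborhood via~\eqref{eq:iteration}; one must check at each step that the produced triple lies in $\In_{i_{j+1}}$, which is exactly what nondegeneracy conditions~\eqref{eq:nondegeneracy_in_nu_coordinate} and~\eqref{eq:nondegeneracy_in_other_coordinates} guarantee (they ensure the relevant projections of the limiting measure are a.s.\ nonzero, so the next $\Psi$ is well-defined). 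By the Markov property of $X_\eps$ applied at the successive (rescaled) entrance times, the branch choices are asymptotically independent given the past data, so the probability of following a given $\z\in L$ converges to $\pi(\z)=p_1(\z)\cdots p_k(\z)$; since $L$ is conservative these probabilities sum to $1$, so no mass escapes.

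Finally, convergence in $(\X,\rho)$: on the event that $X_\eps$ follows branch $\z$ (asymptotic probability $\pi(\z)$), the rescaled graph $\Gamma_{Z_\eps(t),\,t\le T_\eps}$ is, by the concatenation of the single-saddle collapse statements and the orbit-traversal estimates, within $o_\eps(1)$ in $\rho$ of the piecewise-constant curve $\Gamma(\z)=(\theta_0,z_{i_1},t_1,\theta_1,\ldots,z_{i_k},t_k,\theta_k)$; here I use that $\rho$ is insensitive to reparametrization (Theorem~\ref{thm:metric_space}) so that the long $\ln(\eps^{-1})$-scale holding times at saddles and the genuinely vanishing traversal times are correctly separated in the time coordinate while the jump-curves $\gamma_i^\pm$ are retained. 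Combining over the finitely many $\z\in L$ (a finite partition of the sample space up to $o_\eps(1)$ probability) yields weak convergence of $\Law(\Gamma_{Z_\eps,\,t\le T_\eps})$ to $M_{x_0,L}=\sum_{\z\in L}\pi(\z)\,\delta_{\Gamma(\z)}$, which is~\eqref{eq:def_of_M}. The main obstacle is the single-saddle analysis packaged as Lemma~\ref{lm:iteration_lemma} — in particular controlling the exit distribution when expansion dominates contraction (so that the memory term $\eps^{-\lambda_2/\lambda_1}$-type contribution, foreshadowed in Section~\ref{sec:2d-linear-nonrigorous}, survives and makes $\alpha_j<1$, $\beta_j<1$), and making the "conditionally on the branch" statements uniform enough to chain them $K$ times without accumulating error; the gluing across orbits and the topological passage to the limit are, by comparison, routine given Lemma~\ref{lm:graph_coverge_if_continuous_function_converge} and Theorem~\ref{thm:metric_space}.
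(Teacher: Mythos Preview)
Your proposal is correct and follows essentially the same strategy as the paper: reduce to the iteration lemma (Lemma~\ref{lm:iteration_lemma}), chain it along each admissible sequence via the strong Markov property starting from the linearized data $(\tilde x_0,1,\mu_0)$, and conclude weak convergence in $(\X,\rho)$ to the discrete measure $M_{x_0,L}$. The paper's only refinements over your sketch are the explicit form of the stopping time $T_\eps=\eps^{-1}\wedge\inf_{\z\in L}T_{\eps,\z}$ with a sublogarithmic tail $(\ln\eps^{-1})^{1/2}$ appended so that the final jump curve $\theta_k$ is fully traced (this is where part~\ref{it:sublog-tracking} of Lemma~\ref{lm:iteration_lemma} is used), and the use of a concrete proximity criterion (Lemma~\ref{lm:proximity_to_piecewise_constant}) together with the Portmanteau per-atom check in place of your direct partition argument and appeal to reparametrization invariance.
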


For any entrance-exit map the family of conservative sets includes all finite complete sets, so that the content of Theorem~\ref{th:main} is nontrivial.

Importantly, we actually construct the desired entrance-exit map $\Psi$ in the proof. This allows to study the details
of the limiting process in Section~\ref{sec:discussion}. At this point let us just mention that 
the sequence of saddles visited by the limiting process can be Markov or non-Markov depending
on the eigenvalues of the linearizations at the saddle points.
We discuss this memory effect and some other 
implications and possible extensions of  Theorem~\ref{th:main} in Section~\ref{sec:discussion}.

\section{Exit measure asymptotics}\label{sec:exit}

We shall now apply Theorem~\ref{th:main} to the exit problem along a heteroclinic network, and formulate a theorem that, in a sense, gives more precise information on the exit distribution than the FW theory.
We assume that there is a domain $D\subset\R^d$ with piecewise smooth boundary
such that $x_0\in D$. The FW theory implies that, as $\eps\to 0$, the exit
measure for the process~$X_\eps$ started at $x_0$ concentrates at points $y\in\partial D$ that provide the minimum value
of the so called quasi-potential $V(x_0,y)$. Since for all the points that are reachable
from $x_0$ along the heteroclinic network, the quasi-potential equals~0, we conclude
that in the case of heteroclinic networks, the exit measure
asymptotically concentrates at the boundary points that can be
reached from $x_0$ along the heteroclinic network. However, this approach does not allow to
distinguish between the exit points while ours allows to determine an exact limiting probability for each
exit point.

We take a point $x_0$ on one of the heteroclinic orbits of the network and denote by $L(x_0,D)$ the set of 
all the admissible sequences 
$\z$ (of any length $k$) for $x_0$ such that the last curve $\theta_{k}$ of the sequence intersects
$\partial D$ transversally at a point $q(\z)$ (if there are several points of intersection we take
the first one with respect to the natural order on $\theta_{k}$), and $\theta_j$ does not intersect $\partial D$ for
all $j<k$.   
Let 
\[
\tau_{\eps}(x_0,D)=\inf\{t:X_\eps(t)\in\partial D\}.
\]
The distribution of $X_\eps(\tau_\eps(x_0,D))$ is concentrated on $\partial D$.

For each $\z\in L(x_0,D)$, one can define $\pi(\z)$ via \eqref{eq:product_of_conditional_probabilities}.
\begin{theorem}\label{th:exit_asymptotics} For the setting described above, if the set $L(x_0,D)$
is conservative,
then the distribution of $X(\tau_\eps(x_0,D))$ converges
weakly, as $\eps\to0$,  to 
\[
P_{x_0,D}=\sum_{\z\in L(x_0,D)}\pi(\z)\delta_{q(\z)}.
\]
\end{theorem}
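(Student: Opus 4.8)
The plan is to derive Theorem~\ref{th:exit_asymptotics} from the main weak convergence result, Theorem~\ref{th:main}, essentially by a continuous-mapping argument on the curve space $(\X,\rho)$, supplemented by a localization step that handles the stopping time. First I would check that $L(x_0,D)$, being a conservative free set of admissible sequences, fits into the framework of Theorem~\ref{th:main}: conservativity is assumed, and freeness follows because once $\theta_k$ crosses $\partial D$ transversally we stop, so no two sequences in $L(x_0,D)$ are comparable in the binary-tree order. Theorem~\ref{th:main} then provides stopping times $(T_\eps)$ and tells us that $\Gamma_{Z_\eps(t),\,t\le T_\eps}$ converges weakly in $(\X,\rho)$ to $M_{x_0,L(x_0,D)}$, which is supported on the finite set $\{\Gamma(\z):\z\in L(x_0,D)\}$ with mass $\pi(\z)$ on each $\Gamma(\z)$.

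The core step is to produce a functional $q:\X\to\partial D$ that (a) reads off the exit point on the boundary, and (b) is continuous at each $\Gamma(\z)$, $\z\in L(x_0,D)$. Concretely, for a curve $\Gamma$ I would let $q(\Gamma)$ be the first point (in the natural time-order along the curve) at which the spatial projection of $\Gamma$ meets $\partial D$. On the limit curves $\Gamma(\z)$ the last jump-curve $\theta_k$ crosses $\partial D$ \emph{transversally} at $q(\z)$ and no earlier piece of $\Gamma(\z)$ touches $\partial D$; transversality is exactly the open condition that makes $q$ continuous there, since any curve $\rho$-close to $\Gamma(\z)$ must have a nearby first crossing of $\partial D$ near $q(\z)$ (here I would use Lemma~\ref{lm:graph_coverge_if_continuous_function_converge} and the fact that $\rho$-convergence of curves forces uniform closeness of suitable reparametrizations, so a transversal crossing is stable). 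By the continuous mapping theorem for weak convergence, $q(\Gamma_{Z_\eps(t),\,t\le T_\eps})$ converges weakly to the pushforward of $M_{x_0,L(x_0,D)}$ under $q$, which is precisely $\sum_{\z\in L(x_0,D)}\pi(\z)\delta_{q(\z)}$.

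It remains to identify $q(\Gamma_{Z_\eps(t),\,t\le T_\eps})$ with $X_\eps(\tau_\eps(x_0,D))$ up to asymptotically negligible error. Here I would argue that with probability tending to $1$, the process $X_\eps$ exits $D$ during the time window $[0,T_\eps\ln(\eps^{-1})]$ and does so along (a small tube around) the last heteroclinic curve $\theta_k$ of the realized admissible sequence: before reaching that curve the trajectory stays in a shrinking neighborhood of the network, which lies in the interior of $D$ for all the earlier curves $\theta_j$, $j<k$, by definition of $L(x_0,D)$. Hence the first boundary crossing of the rescaled path and the first boundary crossing recorded by $q$ coincide up to an event of vanishing probability, and $X_\eps(\tau_\eps(x_0,D))\to q(\Gamma(\z))$ jointly with the curve convergence. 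I expect the main obstacle to be precisely this last localization: one must rule out ``spurious'' early exits caused by the $\eps$-noise pushing $X_\eps$ across $\partial D$ while it is transiting near an interior heteroclinic orbit, and one must ensure $T_\eps$ is large enough that the genuine exit along $\theta_k$ has already happened — both require quantitative control of the tube around the network on the relevant time scale, which should follow from the estimates underlying Theorem~\ref{th:main} (the central technical lemma) together with standard large-deviation bounds on the displacement of $X_\eps$ away from the deterministic orbit on time intervals of order $\ln(\eps^{-1})$.
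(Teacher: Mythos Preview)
Your proposal is correct and follows the same approach as the paper: apply Theorem~\ref{th:main} to the conservative set $L(x_0,D)$ and then read off the exit point via the continuous mapping theorem, using transversality of $\theta_k$ at $q(\z)$ to guarantee continuity of the ``first boundary crossing'' functional at each $\Gamma(\z)$ in the support of $M_{x_0,L(x_0,D)}$. The paper's proof compresses this into two sentences. One minor comment: your final localization worry (spurious early exits, whether $T_\eps$ is large enough) is already absorbed by the continuous-mapping step and does not need a separate large-deviation argument---since $M_{x_0,L(x_0,D)}$ puts full mass on curves that do cross $\partial D$ (strictly before their endpoint, transversally, and nowhere earlier), the Portmanteau theorem forces $\Pp\{\Gamma_{Z_\eps(t),\,t\le T_\eps}\ \text{crosses }\partial D\}\to 1$, and on that event $q(\Gamma_{Z_\eps(t),\,t\le T_\eps})=X_\eps(\tau_\eps(x_0,D))$ exactly.
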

\begin{proof} Let us use Theorem~\ref{th:main} to choose the times
$(T_\eps)_{\eps>0}$ providing convergence of the distribution of $\Gamma_{Z_\eps(t),t\le T_\eps}$
to $M_{x_0,L(x_0,D)}$. The theorem follows since
$X_\eps(\tau_\eps(x_0,D))$ is a functional of $\Gamma_{Z_\eps(t),t\le T_\eps}$, continuous on the support
of $M_{x_0,L(x_0,D)}$. 
\end{proof}

\begin{remark}\rm
Notice that for different sequences
$\z$ and $\z'$ it is possible to have $q(\z)=q(\z')$ so that an exit point can accumulate its
limiting probability from a variety of admissible sequences.
\end{remark}
\begin{remark}\rm
The behavior of the system up to $\tau_{\eps}(x_0,D)$ is entirely determined by the drift and
diffusion coefficients inside $D$. Therefore, there is an obvious generalization of this theorem for
heteroclinic  networks in a domain, where one requires the invariant manifolds associated to the highest
eigenvalue at a critical point to connect that critical point either to another critical point, or to a point on $\partial D$. An advantage of that theorem is that one does not have to specify the (irrelevant) coefficients of~\eqref{eq:sde} outside of~$D$. We omit the precise formulation for brevity. 
\end{remark}
\begin{remark}\rm 
In the case of nonconservative $L(x_0,D)$, the limit theorem is harder to formulate. In the limit, the exit happens
along the sequences belonging to $L=L(x_0,D)$ with positive probability $\pi(L)<1$. With
probability $1-\pi(L)$, the exit happens in a more complicated way (and in a longer than logarithmic time) depending on the details of the driving vector
field. 
\end{remark}

\section{Proof of Theorem~\ref{th:main}}
\label{sec:main_proof}

We begin with the central lemma that we need in the proof. It has a lengthy statement, and after formulating it, we
also give a brief informal explanation.

For each $i\in\Cc$ we introduce $U^+_i$ and $U^-_i$ via
\begin{equation}
U^\pm_i=\{x\in U: P^{i,1}f_i(x)=\pm R_i\}.
\label{eq:exit_boundary}
\end{equation}


\begin{lemma}\label{lm:iteration_lemma} For each $i\in\Cc$, there is a map
\[
\Psi_i=((t_{i,-},p_{i,-},x_{i,-},\beta_{i,-},
F_{i,-}),(t_{i,+},p_{i,+},x_{i,+},\beta_{i,+}, F_{i,+})):\In_i\to \widehat\Out_i
\]
with the following property. 

Take any $(x,\alpha,\mu)\in \In_i$
and any family of distributions $(\mu_\eps)_{\eps>0}$ in $\Pc(\R^d)$ with
$\mu_\eps\Rightarrow \mu$ as $\eps\to0$.
For each $\eps>0$, consider the solution $X_\eps$ of~\eqref{eq:sde} with initial condition
\begin{equation}
X_\eps(0)=x+\eps^\alpha\phi_\eps,
\label{eq:inidata_before_splitting} 
\end{equation}
where
\begin{equation}
\label{eq:convergence_in_tangent_space}
\Law(\phi_\eps)= \mu_\eps,\quad \eps>0,
\end{equation}
and define a stopping time
\[
T_{\out,\eps}=\inf\{t\ge0: X_\eps(t)\in U_i^+\cup U_i^-\},\quad \eps>0,
\]
and two events
\[
A_{i,\pm,\eps}=\{X_\eps(T_{\out,\eps})\in U^\pm_i\}, \quad \eps>0.
\]
Then
\begin{enumerate}
\item\label{it:convergence_of_times} As $\eps\to0$,
\[\frac{T_{\out,\eps}}{\ln(\eps^{-1})}\stackrel{\Pp}{\to}
t_{i,\pm}(x,\alpha,\mu).\]
\item\label{it:convergence_of_splitting_probabilities} As $\eps\to0$,
\[
\Pp(A_{i,\pm,\eps})\to p_{i,\pm}(x,\alpha,\mu).
\]
\item\label{it:distribution_in_the_tangent_space}  
There is a family of  random vectors
$(\psi_{i,\eps})_{\eps>0}$ such that
\begin{enumerate}
\item for every $\eps>0$, on
$A_{i,\pm,\eps}$
\[X_\eps(T_{\out,\eps}+h^\pm_{i})=x_{i,\pm}(x,\alpha,\mu)+\eps^{\beta_{i,\pm}(x,\alpha,\mu)} \psi_{i,\eps}, \]
where $h^{\pm}_i$ was defined in~\eqref{eq:hpm};
\item as $\eps\to0$,
\[\Law(\psi_{i,\eps}| A_{i,\pm,\eps})\Rightarrow F_{i,\pm}(x,\alpha,\mu);
\]
\item
\[
F_{i,\pm}(x,\alpha,\mu)\left\{\psi: P^{n^\pm(i),k} Df_{n^{\pm}(i)}(x_{i,\pm})\psi\ne 0\right\}=1,\quad k=
\begin{cases}
1,2,&\nu>2,\\1,&\nu=2.
\end{cases}
\]
\end{enumerate}
\item \label{it:tracking} For any $r>0$, there is $T(r)$ such that, as $\eps\to0$,
\[
\Pp\left\{\sup_{0\le t\le T(r)}|X_\eps(t)-S^tx|\ge r\right\}\to 0,
\]
\[
\Pp\left\{\sup_{T(r)\le t\le T_{\out,\eps}-T(r)}|X_\eps(t)-z_{i}|\ge
r\right\}\to 0,
\]
\[
\Pp\left(A_{i,\pm,\eps}\cap
\left\{\sup_{T_{\out,\eps}-T(r)\le t\le T_{\out,\eps}+h^\pm_{i}}|X_\eps(t)-S^{t-T_{\out,\eps}}q_{i,\pm}|\ge
r\right\}\right)\to 0.
\]
\item\label{it:sublog-tracking} For any $r>0$, as $\eps\to0$,
\[
\Pp\left(A_{i,\pm,\eps}\cap
\left\{\sup_{0\le t\le (\ln\eps^{-1})^{1/2}} |X_\eps(T_{\out,\eps}+h^\pm_{i}+t)-S^t x_{i,\pm}|\ge
r\right\}\right)\to 0.
\]
\end{enumerate}
\end{lemma}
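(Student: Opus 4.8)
The plan is to reduce the nonlinear problem near $z_i$ to a linear one via the conjugating diffeomorphism $f_i$, analyze the linear system explicitly in the spirit of Section~\ref{sec:2d-linear-nonrigorous}, and then transport the results back. First I would change coordinates: set $Y_\eps(t)=f_i(X_\eps(t))$ on the time interval during which $X_\eps$ stays in $\tilde U_i$. By It\^o's formula and condition (i), $Y_\eps$ solves an SDE whose drift is exactly $A_iY_\eps$ (since the conjugation is $C^2$ and linearizes the flow) plus an $O(\eps^2)$ It\^o correction, with diffusion matrix $\tilde\sigma(Y_\eps)=Df_i(f_i^{-1}(Y_\eps))\sigma(f_i^{-1}(Y_\eps))$, still uniformly nondegenerate. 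So up to negligible corrections I am looking at $dY_\eps = A_iY_\eps\,dt + \eps\tilde\sigma(Y_\eps)\,dW$ started from $f_i(x)+\eps^\alpha Df_i(x)\phi_\eps + o(\eps^\alpha)$. Diagonalizing $A_i$ through the eigenbasis $v_{i,1},\dots,v_{i,d}$, I track the scalar coordinates $Y^{(k)}_\eps = P^{i,k}Y_\eps$, each obeying $dY^{(k)}_\eps=\lambda_{i,k}Y^{(k)}_\eps\,dt+\eps\,d\tilde W^{(k)}_\eps$ up to coupling terms that I control by a Gronwall/localization argument while the process remains in $\tilde U_i$.

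The core of the argument is the exit analysis for the unstable coordinate $Y^{(1)}_\eps$, which carries the largest eigenvalue $\lambda_{i,1}$. By Duhamel, $Y^{(1)}_\eps(t)=e^{\lambda_{i,1}t}\bigl(Y^{(1)}_\eps(0)+\eps\int_0^t e^{-\lambda_{i,1}s}\,d\tilde W^{(1)}_\eps(s)\bigr)$; the stochastic integral converges a.s.\ to a Gaussian-type limit, so the effective ``initial displacement'' in the top unstable direction is of order $\eps^{\alpha\wedge 1}$ times a nondegenerate random variable $\Xi_i$ (a mixture of $P^{i,1}Df_i(x)\phi$ under $\mu$ and an independent Gaussian, depending on whether $\alpha<1$, $\alpha=1$, or $\alpha>1$), whose sign determines which of $U_i^\pm$ is hit. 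This yields $p_{i,\pm}=\Pp(\pm\Xi_i>0)$ and, solving $R_i\approx \eps^{\alpha\wedge1}e^{\lambda_{i,1}T_{\out,\eps}}|\Xi_i|$, gives $T_{\out,\eps}/\ln(\eps^{-1})\stackrel{\Pp}{\to}(\alpha\wedge1)/\lambda_{i,1}=:t_{i,\pm}$, proving parts (1) and (2). I must be careful that the contribution along the other unstable directions $v_{i,2},\dots,v_{i,\nu_i-1}$ does not cause an earlier exit through a face of $\tilde U_i$ other than $U_i^\pm$; this is where the nondegeneracy hypothesis~\eqref{eq:nondegeneracy_in_other_coordinates} on the $\bar v$'s (inherited along the heteroclinic orbit) and a comparison of exponential growth rates $\lambda_{i,1}>\lambda_{i,k}$ guarantee that $U_i^\pm$ is reached first with probability tending to $1$ — this and the analogous statement for the saddle geometry I expect to be the main technical obstacle.

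Next I would read off part (3): at the exit time the stable coordinates $Y^{(k)}_\eps(T_{\out,\eps})$, $k\ge\nu_i$, are given by Duhamel as $e^{\lambda_{i,k}T_{\out,\eps}}(Y^{(k)}_\eps(0)+\eps\int_0^{T_{\out,\eps}}e^{-\lambda_{i,k}s}d\tilde W^{(k)}_\eps)$; substituting $T_{\out,\eps}\sim (\alpha\wedge1)\lambda_{i,1}^{-1}\ln(\eps^{-1})$ produces, exactly as in the three cases of Section~\ref{sec:2d-linear-nonrigorous}, an exit displacement of order $\eps^{\gamma}$ for an explicit exponent $\gamma$ (the min of $1$ and $-(\alpha\wedge1)\lambda_{i,k}/\lambda_{i,1}$ over the relevant $k$, combined with the decaying initial term), together with a limiting conditional law of the renormalized vector. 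I then compose with the deterministic flow over the fixed time $h_i^\pm$ to move from $U_i^\pm$ to $\tilde U_{n^\pm(i)}$: this multiplies the displacement by the fundamental matrix $\Phi_{q_i^\pm}(h_i^\pm)$ (pulled through $Df$'s appropriately, giving the vectors $\bar v^\pm_{i,j}$), defines $x_{i,\pm}=x_i^\pm$, $\beta_{i,\pm}=\gamma$, and $F_{i,\pm}$ as the pushforward law; the nondegeneracy conclusion in (3c) follows from~\eqref{eq:nondegeneracy_in_nu_coordinate} and~\eqref{eq:nondegeneracy_in_other_coordinates}. Finally, parts (4) and (5) are ``tracking'' statements: on $[0,T(r)]$ and on any window of length $(\ln\eps^{-1})^{1/2}$ the process follows the deterministic orbit because, by Gronwall and a standard estimate on $\sup|\eps W|$ over such windows, the noise-driven deviation is $o(1)$; on the long middle stretch $[T(r),T_{\out,\eps}-T(r)]$ the process is trapped near $z_i$ because it has not yet escaped along the unstable direction and the stable directions contract — this is essentially the content already established in~\cite{Kifer} and~\cite{Bakhtin-SPA:MR2411523}, which I would invoke, supplementing with the refined estimates above where needed.
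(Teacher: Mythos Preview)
Your overall strategy---linearize via $f_i$, run Duhamel in the eigenbasis, read off the exit time and splitting from the top unstable coordinate, then transport along the heteroclinic connection by the fundamental matrix---is exactly the route the paper takes in Sections~\ref{sec:one_saddle} and~\ref{sec:along_heteroclinic}. Two substantive pieces are missing, however, and without them the argument does not go through.

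First, the paper does \emph{not} run Duhamel directly on $[0,T_{\out,\eps}]$. It introduces an intermediate stopping time $\tau_\eps$ at which the process first deviates from the deterministic orbit $S^t_A y_0$ by $\eps^{\bar\alpha}$ for some fixed $\bar\alpha\in(0,\alpha)$, and treats the two phases $[0,\tau_\eps]$ and $[\tau_\eps,T_{\out,\eps}]$ separately. This split is essential: the stochastic integral $\int_0^t e^{-\lambda_k s}B^k(Y_\eps(s))\,dW(s)$ depends on $\eps$ through $Y_\eps$, and one cannot identify its limit without knowing which deterministic curve $Y_\eps$ tracks. On the first phase $Y_\eps$ tracks $S^t_A y_0$, giving the Gaussian vector $N_0$ of~\eqref{eq:Gaussian_along_the_stable}; on the second phase it tracks $S^t_A(\pm\eps^{\bar\alpha}v_1)$, and a time-reversal change of variables yields the distinct Gaussian vector $\bar N_0$ with covariance computed along $S^r_A(\pm Rv_1)$, $r\le 0$. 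Your one-shot Duhamel does not separate these two limits, and the sentence ``the stochastic integral converges a.s.\ to a Gaussian-type limit'' hides precisely this difficulty.

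Second, your computation of the exit displacement in part~(3) only looks at the stable coordinates $k\ge\nu_i$. When $\nu_i>2$ the \emph{second unstable} coordinate $Y^{(2)}_\eps$ can dominate: at the exit time it has size of order $\eps^{\alpha(1-\lambda_{i,2}/\lambda_{i,1})}$, and when $-\lambda_{i,\nu_i}\ge\lambda_{i,1}-\lambda_{i,2}$ this beats the stable-direction contribution of order $\eps^{-\alpha\lambda_{i,\nu_i}/\lambda_{i,1}}$. This is the $\eta_+$ term in the paper's Lemma~\ref{lm:main_linear_lemma}, it gives the exponent $\beta=\alpha(1-\lambda_{i,2}/\lambda_{i,1})$ rather than anything of the form you wrote, and it is exactly what makes the nondegeneracy requirement on the $k=2$ component in~\eqref{eq:nondegeneracy_in_other_coordinates} relevant. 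Without this term the limiting law $F_{i,\pm}$ and the exponent $\beta_{i,\pm}$ are wrong in half the cases, and the verification of~(3c) for $k=2$ has no source.
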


\medskip
Part~\eqref{it:convergence_of_times} of the lemma describes the asymptotic behavior of exit times.
Part~\eqref{it:convergence_of_splitting_probabilities} determines the limiting probabilities of choosing each of the two
outgoing heteroclinic orbits. Part \eqref{it:distribution_in_the_tangent_space} describes the entrance distribution for the
next visited saddle point (it takes $T_{\out,\eps}+h^\pm_{i}$ to reach its neighborhood); parts (a) and (b)
give the asymptotic scaling law, and part (c) ensures that
we can apply this lemma at the next saddle, too, which gives rise to the iteration scheme (see the definition of $\In_i$).
Part~\eqref{it:tracking} formalizes the fact that for small $\eps$, with high probability, the diffusion trajectory 
first closely follows the deterministic trajectory $S^tx$, then spends some time in a small neighborhood of the saddle point,
and then follows closely one of the outgoing heteroclinic connections until it reaches the neighborhood of the next saddle point.
Part~\eqref{it:sublog-tracking} shows that after reaching the neighborhood of the next saddle point the trajectory
continues to follow the same heteroclinic connection for a sublogarithmic time.

The proof of this Lemma will be given in Sections~\ref{sec:one_saddle} and \ref{sec:along_heteroclinic}. 
The solution~$X_\eps$ spends most of the time in the neighborhood of the saddle points and in beetween it travels from
one saddle point to another along a heteroclinic connection. 
We split the analysis in two parts accordingly. In Section~\ref{sec:one_saddle} we describe
the behavior of the system in the neighborhood of the saddle point assuming that the initial data is given
by~\eqref{eq:inidata_before_splitting}. In Section~\ref{sec:along_heteroclinic} we describe the motion between
neighborhoods
of two saddle points and finish the proof of Lemma~\ref{lm:iteration_lemma}.

The rest of this section is devoted to the derivation of our main result from Lemma~\ref{lm:iteration_lemma}.

\begin{proof}[Proof of Theorem~\ref{th:main}.] We have to show that the map $\Psi$ constructed in 
Lemma~\ref{lm:iteration_lemma} satisfies the statement of the theorem. 

For any sequence $\z=(\theta_0, z_{i_1},\theta_{1},z_{i_2},\theta_{2}\ldots,\theta_{k-1},z_{i_k},\theta_k)$
and any $\eps$ we define a sequence of stopping times in the following way. First, we set
\[
\tau_{\eps,\z,1,\inin}=\tilde t(x_0),
\]
where $\tilde t(x_0)$ was defined in \eqref{eq:tilde_t}. Then, for $j=1,\ldots,k$, we recall that $U_{i_j}^{\pm}$
was defined in~\eqref{eq:exit_boundary} and define
recursively
\begin{align*}
\tau_{\eps,\z,j,\out}&=
\begin{cases}
\inf\{t\ge \tau_{\eps,\z,j,\inin} :\ X_\eps(t)\in  U^+_{i_j}\},&\theta_j=\gamma^+_{i_j},\\
\inf\{t\ge \tau_{\eps,\z,j,\inin} :\ X_\eps(t)\in  U^-_{i_j}\},&\theta_j=\gamma^-_{i_j},
\end{cases}
\\
\tau_{\eps,\z,j+1,\inin}&=
\begin{cases}
\tau_{\eps,\z,j,\out}+h_{i_j}^+,& \theta_j=\gamma^+_{i_j},\\
\tau_{\eps,\z,j,\out}+h_{i_j}^-,& \theta_j=\gamma^-_{i_j}.
\end{cases}
\end{align*}
Less formally, for each $j$,  the process $X_\eps$ leaves the neighborhood
of $z_{i_j}$ at time $\tau_{\eps,\z,j,\out}$ and travels for time $t_{i_j}^\pm$ along one of the two heteroclinic connections $\theta_j=\gamma^\pm_{i_j}$ emerging from
$z_{i_j}$. At time $\tau_{\eps,\z,j+1,\inin}$ the solution is in the neighborhood of the next saddle
point of the sequence, close to $x^{\pm}_{i_j}$. 

The last point of the sequence $\z$ is special. We define now 
\[
T_{\eps,\z}=\frac{\tau_{\eps,\z,k+1,\inin}+(\ln\eps^{-1})^{1/2}}{\ln(\eps^{-1})}
\] 
and 
\[
 T_\eps=\eps^{-1}  \wedge \inf_{z\in L}T_{\eps,\z}.
\]
The $\eps^{-1}$ term is introduced to make $T_\eps$ finite (to take into account the improbable case where $X_\eps$
does not evolve along
any $\z\in L$) so that the curve
$\Gamma_{Z_\eps(t),t\le T_\eps}$ is well-defined.

According to Theorem~2.1 in \cite{Billingsley:MR1700749},
it suffices to show that for any open set $G\in\X$,
\[\liminf_{\eps\to 0} \Pp\{\Gamma_{Z_\eps(t),t\le T_\eps}\in G\}\ge M_{x_0,L}(G),\]
where the probability measure $M_{x_0,L}$ is defined by~\eqref{eq:def_of_M}.
Since the
measure~$M_{x_0,L}$ is discrete, it is sufficient to check that for any
$\z\in L$ and any open set
$G\subset \X$ containing $\Gamma(\z)$,
\begin{equation}
\lim_{\eps\to0}\Pp\{\Gamma_{Z_\eps(t),t\le T_\eps}\in G\}\ge\pi(\z).
\label{eq:limit_probability}
\end{equation}

We start with the linearization along the orbit $S^tx_0$. It follows 
from~\cite{Blagoveschenskii:MR0139204} (see also Lemma~\ref{lm:linearization_on_finite_interval}) that 
\[X_\eps(\tilde t(x_0))=\tilde x_0+\eps\phi_\eps,\]
with $\Law(\phi_\eps)\Rightarrow\mu_0,$
where $\mu_0$ was defined in~\eqref{eq:mu_0}.

Now the strong Markov property allows to apply  Lemma~\ref{lm:iteration_lemma} iteratively along the saddle points of
sequence $\z$. We can complete the proof by reparametrizing the heteroclinic connections of $\z$
appropriately and applying the following proximity criterion to
derive~\eqref{eq:limit_probability}.

\begin{lemma} Suppose that $\delta$ is a positive number and a path $\gamma$ defines  a piecewise constant curve
$\Gamma$ as given by~\eqref{eq:piecewise_constant}. Suppose a continuous function~$f$ and  nondecreasing nonnegative
number sequences $(r_m)_{m=0}^{2k+1}$ and $(s'_m)_{m=0}^{2k+1}$
satisfy the following properties:
\begin{enumerate}
\item $t_j-\delta\le r_{2j}\le t_j\le r_{2j+1}\le t_j+\delta$,\quad $j=0,\ldots,k$;
\item $|f(r)-y_j|\le \delta$\quad for $j=1,\ldots,k$ and $r\in[r_{2j-1},r_{2j}]$; 
\item $s_{2j}\le s'_{2j}\le s'_{2j+1}\le s_{2j+1}$,\quad $j=0,\ldots,k$;
\item for each $j=0,\ldots,k$, there is a nondecreasing bijection \[\lambda_j:[r_{2j},r_{2j+1}]\to[s'_{2j},s'_{2j+1}]\]
such that \[|\gamma(\lambda_j(r))-(r,f^1(r),\ldots, f^d(r))|<\delta,\quad r\in[r_{2j},r_{2j+1}];\]   
\item for each $j=0,\ldots,k$,
\begin{align*}
|\gamma(s)-\gamma(s_{2j})|\le\delta,&\quad s\in[s_{2j},s'_{2j}],\\
|\gamma(s)-\gamma(s_{2j+1})|\le\delta,&\quad s\in[s'_{2j+1},s_{2j+1}].
\end{align*}
\end{enumerate}
Then $\rho(\Gamma,\Gamma_f)\le 3\delta$.
\label{lm:proximity_to_piecewise_constant}
\end{lemma}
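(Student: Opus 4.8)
The plan is to exhibit an explicit common parametrization realizing (or nearly realizing) the infimum in the definition~\eqref{eq:def-distance} of $\rho$. Recall $\Gamma = \Gamma_{\gamma}$ and $\Gamma_f$ is the equivalence class of the canonical path $\gamma_f(u) = (uT, f^1(uT), \ldots, f^d(uT))$ associated to $f$ on $[0,T]$. To compare $\Gamma$ and $\Gamma_f$ under $\rho$, it suffices to produce one path $\tilde\gamma \in \Gamma$ and one path $\tilde\gamma_f \in \Gamma_f$ (each a reparametrization by a nondecreasing surjection of $[0,1]$) such that $\sup_{s}|\tilde\gamma(s) - \tilde\gamma_f(s)| \le 3\delta$. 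Concretely, I would reparametrize both curves over a common ``schedule'' variable that alternates between $k+1$ \emph{waiting blocks} (where the curve $\Gamma_f$ sits near a jump point $y_j$ at a time $\approx t_j$, matching the interval $[s_{2j}, s_{2j+1}]$ of $\gamma$ where $\gamma^0 = t_j$) and $k$ \emph{jump blocks} (where $\Gamma_f$ traverses the arc of $f$ between consecutive waiting times, matching the interval $[s_{2j-1},s_{2j}]$ of $\gamma$ where the spatial coordinates equal $y_j$ — wait, re-indexing: the jump of $\gamma$ along $\gamma_j$ occupies $[s_{2j}, s_{2j+1}]$ in time-constant mode and the particle sits at $y_j$ on $[s_{2j-1},s_{2j}]$). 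I will align these blocks using the bijections $\lambda_j$ from hypothesis (4) on the jump portions and affine maps on the waiting portions.

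Here are the steps in order. First, I fix the decomposition of $[0,1]$ for the canonical path $\gamma_f$: choose $0 = u_0 \le u_1 \le \cdots \le u_k = 1$ with $u_j T = r_{2j}$ — wait, one must be careful since $\gamma_f$'s time coordinate is exactly linear; instead I work on the schedule interval $[0,T]$ and note that the ``time'' variable $r$ ranging over $[0,T]$ can itself serve as a parameter. Hypotheses (1)–(5) are organized precisely so that: on each $r \in [r_{2j-1}, r_{2j}]$, $f(r)$ is within $\delta$ of $y_j$ (hyp. 2), while $\gamma$ on $[s_{2j-1},s_{2j}]$ has spatial part exactly $y_j$ and time part in $[t_{j-1}, t_j]$, which differs from $r$ by at most... here is where I need hyp. (1) giving $r_{2j-1} \in [t_{j-1}, t_{j-1}+\delta]$ and $r_{2j} \in [t_j - \delta, t_j]$, so the time coordinate $r$ lies within $\delta$ of the interval $[t_{j-1}, t_j]$ swept by $\gamma^0$; pairing $r$ with an appropriate reparametrization of $\gamma|_{[s_{2j-1},s_{2j}]}$ gives spatial error $\le \delta$ and temporal error $\le \delta$, hence Euclidean error $\le \sqrt{2}\,\delta \le 2\delta$ — I will be slightly generous and absorb into $3\delta$. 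Second, on each $r \in [r_{2j},r_{2j+1}]$ (the jump portion of $f$), hyp. (4) hands me the bijection $\lambda_j$ onto $[s'_{2j}, s'_{2j+1}]$ with $|\gamma(\lambda_j(r)) - (r, f(r))| < \delta$ directly. Third, the leftover sub-intervals $[s_{2j}, s'_{2j}]$ and $[s'_{2j+1}, s_{2j+1}]$ of $\gamma$'s parameter (hyp. 5) are where $\gamma$ barely moves — within $\delta$ of its endpoint $\gamma(s_{2j})$ or $\gamma(s_{2j+1})$ — and I insert these as short ``pauses'' in the common parametrization during which $\gamma_f$ is held fixed at the corresponding matched value, contributing error $\le \delta$. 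Fourth, concatenate all these local correspondences and check they glue into genuine nondecreasing surjections $[0,1] \to [0,1]$ on each side (this is where monotonicity of $(r_m)$ and $(s'_m)$ and the nesting $s_{2j} \le s'_{2j} \le s'_{2j+1} \le s_{2j+1}$ from hyp. (3) are used), reparametrize time-scales affinely to land on $[0,1]$, and conclude $\rho(\Gamma, \Gamma_f) \le 3\delta$.

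The main obstacle I anticipate is the bookkeeping at the \emph{junctions between blocks}: one must verify that the piecewise-defined reparametrizations on the two sides agree at the shared breakpoints (so the composite maps are continuous and nondecreasing, not merely piecewise so) and that the supremum of the Euclidean distance is genuinely controlled across \emph{all} of $[0,1]$, including the transition points where a waiting block meets a jump block — here the constant $3\delta$ rather than $\sqrt2\,\delta$ gives the needed slack, since at a junction one is simultaneously within $\delta$ of a $y_j$ (spatial) and within $\delta$ of a time level $t_j$, plus possibly a further $\delta$ from the pause insertion of step three. A secondary technical point is handling the degenerate cases $s_{2j} = s_{2j+1}$ or $r_{2j} = r_{2j+1}$ (empty blocks, e.g. instantaneous sitting or vanishing arcs), which the formulation with $\le$ throughout permits; these are dealt with by simply omitting the corresponding block from the concatenation. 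Everything else is a routine but somewhat lengthy verification that the constructed $\lambda_1, \lambda_2 : [0,1]\to[0,1]$ are nondecreasing surjections, after which~\eqref{eq:def-distance} yields the bound.
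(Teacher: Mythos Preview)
Your plan is correct and is essentially the paper's approach: exhibit an explicit representative of $\Gamma_f$ that tracks $\gamma$ block by block (waiting/jump/pause) and check the sup-distance is at most $3\delta$. The paper's execution is slightly more economical in that it keeps $\gamma$ itself as one of the two paths and builds only a single reparametrization $\tilde\gamma_f\in\Gamma_f$ over the \emph{same} parameter $s\in[0,1]$ --- using $\gamma^0(s)$ directly as the time to plug into $f$ on the waiting blocks and $\lambda_j^{-1}(s)$ on the jump blocks --- rather than reparametrizing both curves; this sidesteps the need to construct and glue two surjections, but the underlying block decomposition and error accounting are identical to yours.
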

The proof of Lemma~\ref{lm:proximity_to_piecewise_constant} is given in Section
~\ref{sec:basics-on-paths}
\end{proof}


\section{The system in the neighborhood of a saddle point}\label{sec:one_saddle}

In this section we fix $i\in\Cc$ and consider a saddle point $z_i$. We recall our assumption that the dynamics generated
by $b$ in a small
neighborhood $U_i$ of $z_i$ is conjugated by a $C^2$-diffeomorphism $f_i$ to that generated by a linear vector 
field generated by a matrix $A_i$ in a neighborhood
of the origin.  In this section we often denote $A_i,f_i$, etc.\ by
$A,f$, etc., omitting the dependence on $i$. In particular, the flow 
generated by the linearized vector field is given by $e^{tA}$ and sometimes
will be denoted by $S^t_A$.

We recall that the eigenvalues $\lambda_1,\ldots,\lambda_d$ of $A$ are real and simple,
and there is a number $\nu\ge2$ such that
\[
\lambda_1 >\ldots> \lambda_{\nu-1}>0>\lambda_{\nu}>\ldots>\lambda_d.
\]
We denote the associated eigenvectors by $v_1,\ldots,v_d$, and introduce the coordinates $u^1,\ldots, u^d$ of a vector $u$ via $u=\sum_{k=1}^d u^k v_k$. We define
\[
L=\Span\{v_2,\ldots,v_d\},\quad L^-=\Span\{v_\nu,\ldots,v_d\},
\]
and denote by $\Pi_L$ the projection on $L$ along $v_1$.

We begin the analysis with the derivation of
the It\^o equation for $Y_\eps(t)=f(X_\eps(t))$. 
 The It\^o formula gives:
\begin{align*}
dY_{\eps}^j(t)&=\sum_{k=1}^d \partial_k f^j(X_\eps(t))dX_{\eps}^k
+\frac{\eps^2}{2}\sum_{k,l=1}^d(\sigma\sigma^*)^{kl}(X_\eps(t)) \partial_{kl} f^j(X_\eps(t)) dt\\
   &=\sum_{k=1}^d \partial_k f^j(X_\eps(t))b^k(X_\eps(t))dt
     + \eps\sum_{k=1}^d \partial_k f^j(X_\eps(t))\sigma^k_m(X_\eps(t))dW^m(t)\\
&+\frac{\eps^2}{2}\sum_{k,l=1}^d(\sigma\sigma^*)^{kl}(X_\eps(t)) \partial_{kl} f^j(X_\eps(t)) dt
\end{align*}
where $\sigma^*$ denotes the transpose of $\sigma$, and $(\sigma\sigma^*)^{kl}(x)$ denotes $(\sigma(x)\sigma^*(x)))^{kl}$. 
Since the pushforward of the vector field $b$ under $f$ at
a point $y$ is given by~$Ay$, we have
\begin{equation*}
Ay= Df(g(y))b(g(y)),
\end{equation*}
where $g=f^{-1}$, and the equation above rewrites as
\begin{align*}
dY_{\eps}^j(t)   &=\sum_{k=1}^d A^j_kY_\eps^k(t) dt
     + \eps\sum_{k=1}^d \partial_k f^j(g(Y_\eps(t)))\sigma^k_m(g(Y_\eps(t)))dW^m(t)\\
&+\frac{\eps^2}{2}\sum_{k,l=1}^d(\sigma\sigma^*)^{kl}(g(Y_\eps(t))) \partial_{kl} f^j(g(Y_\eps(t))) dt.
\end{align*}

We rewrite the last identity as
\begin{equation}
dY_{\eps}(t)=AY_\eps(t)dt + \eps B(Y_\eps(t))dW(t) + \eps^2 C(Y_\eps(t))dt,
\label{eq:main_conjugated_system}
\end{equation}
or, equivalently,
\begin{align*}
dY^j_{\eps}(t)&=A^j_kY^k_\eps(t)dt + \eps B^{j}(Y_\eps(t))dW(t) + \eps^2 C^j(Y_\eps(t))dt\\
&=A^j_kY^k_\eps(t)dt + \eps\sum_{k=1}^d B^{j}_k(Y_\eps(t))dW^k(t) + \eps^2 C^j(Y_\eps(t))dt.
\end{align*}
We see that $B$ and $C$ are continuous and bounded in the neighborhood $f(U)$ of the origin,
and $B$ is nodegenerate.

It is also clear that if we assume~\eqref{eq:inidata_before_splitting}, then
\begin{equation}
Y_\eps(0)=y_0+\eps^\alpha \xi_\eps,
\label{eq:xi_0}
\end{equation}
where $y_0=f(x)$, and
$\xi_\eps$ converges, as $\eps\to0$, in distribution to $\xi_0=DF(x_0)\phi_0$, with $\Law(\phi_0)=\mu$, see \eqref{eq:convergence_in_tangent_space}.



Recall that for the saddle point $z=z_i$, two neighborhoods $\tilde U\subset U$ are defined.  
Define $V=f(U)$ and $\tilde V=f(\tilde U)$, so that 
\[
\tilde V=\{y\in\R^d:\ |y^j|\le R, j=1,\ldots,d\}\subset V,
\]
(we use the notation  $R=R_i$, $\tilde U=\tilde U_i$, $U=U_i$ in this section).
Then $y_0\in \tilde V\cap L^-$. In particular, $y_0^k=0$ for all $k<\nu$.

In the remainder of this section we study the system~\eqref{eq:main_conjugated_system}
 with initial data given by \eqref{eq:xi_0}. The solution is actually defined up to a stopping time~$t_{V,\eps}$ at
which the solution hits $\partial V$. Let us define another stopping time
\[
t_\eps=\inf\{t\ge0:\ |Y^1_\eps(t)|=R\}\wedge t_{V,\eps}.
\]
As we shall see later,
\[
\Pp\{t_\eps <  t_{V,\eps}\}\to 1,\quad\eps\to 0,
\]
and thus it makes sense to study the asymptotic behavior of $t_\eps$ and $Y(t_\eps)$. 

To state our main result on system~\eqref{eq:main_conjugated_system},\eqref{eq:xi_0}, see Lemma~\ref{lm:main_linear_lemma} below, 
we have to introduce a certain multidimensional distribution
that is easier to describe in terms of random variables defined on some sufficiently
rich probability space. 

We start with the random vector $\xi_0$ introduced after~\eqref{eq:xi_0}.
Then, on the same probability space 
we define a $(\nu-1)$-dimensional centered Gaussian vector $(N^1_0,\ldots,N^{\nu-1}_0)$
with covariance
\begin{equation}
\E N^k_0 N^j_0 =\int_0^\infty e^{-(\lambda_k+\lambda_j) s}(BB^*)^{kj}(S^s_Ay_0)ds,\quad k,j<\nu,
\label{eq:Gaussian_along_the_stable}
\end{equation}
independent of $\xi_0$. Next, we define $(\kap^1,\ldots,\kap^{\nu-1})$ by
\begin{equation}
\kap^k=\xi^k_0+N_0^k\ONE_{\alpha=1},\quad k<\nu.
\label{eq:kappa}
\end{equation}

Finally, on the same probability space we define a random vector $(\bar N^\nu_0,\ldots,\bar N^{d}_0)$.
Conditioned on each of the two events $\{\sgn(\kap^1)=\pm1\}$, 
 it is a centered Gaussian vector with
\[
\E \bar N^k_0 \bar N^j_0 = \int_{-\infty}^0 e^{-(\lambda_k+\lambda_j)s} (BB^*)^{kj}(S_A^s(\pm Rv_1))ds, \quad k,j\ge\nu,
\]
and independent of $(|\kap^1|,\kap^2,\ldots,\kap^{\nu-1})$.

\begin{lemma}
\label{lm:main_linear_lemma} 
 Suppose the following nondegeneracy conditions are satisfied:
\[y_0^\nu\ne0,\]
\[
\Pp\{\xi^1_0\ne0\}=1,\quad\text{if}\ \alpha<1,
\]
\[
\Pp\{\xi^2_0\ne0\}=1,\quad\text{if}\ \alpha<1\ \text{and}\ \nu>2.
\]
Then
there exists a number $\beta$, and a random vector $(y', \xi', \zeta)$ such that the random vector
\begin{equation*}
\left(Y^1_\eps(t_\eps),\ \eps^{-\beta}\Pi_L Y_\eps(t_\eps),\ t_\eps-\frac{\alpha}{\lambda_1}\ln\frac{1}{\eps}\right)
\end{equation*}
converges in distribution  to $(y', \xi', \zeta)$. More precisely,
\begin{equation*}
\beta=
\begin{cases}
1,& \nu=2\ \text{\rm and}\  -\alpha\lambda_\nu\ge \lambda_1,\\
-\alpha\frac{\lambda_\nu}{\lambda_1},& \nu=2\ \text{\rm and}\ -\alpha\lambda_\nu<\lambda_1,\\
\alpha\left(1-\frac{\lambda_1}{\lambda_2}\right),& \nu>2\ \text{\rm and}\ -\lambda_\nu\ge \lambda_1-\lambda_2,\\
-\alpha\frac{\lambda_\nu}{\lambda_1},&\nu>2\ \text{\rm and}\ -\lambda_\nu< \lambda_1-\lambda_2,
\end{cases}
\end{equation*}

\medskip

\begin{equation*}
y'=\sgn(\kap^1)R,
\end{equation*}
and
\begin{equation}
\label{eq:exit_distribution}
\xi'=
\begin{cases}
N,& \nu=2\ \text{\rm and}\  -\alpha\lambda_\nu>\lambda_1,\\
 N + \eta_-,& \nu=2\ \text{\rm and}\  -\alpha\lambda_\nu=\lambda_1,\\
 \eta_-,& \nu=2\ \text{\rm and}\ -\alpha\lambda_\nu<\lambda_1,\\
\eta_+,& \nu>2\ \text{\rm and}\ -\lambda_\nu> \lambda_1-\lambda_2,\\
\eta_+ + \eta_-,& \nu>2\ \text{\rm and}\ -\lambda_\nu= \lambda_1-\lambda_2,\\
\eta_-,&\nu>2\ \text{\rm and}\ -\lambda_\nu< \lambda_1-\lambda_2,
\end{cases}
\end{equation}
with
\[
\eta_-=R^{\frac{\lambda_\nu}{\lambda_1}}|\kap^1|^{-\frac{\lambda_\nu}{\lambda_1}}y_0^\nu v_\nu,
\]
\[
\eta_+=R^{\frac{\lambda_2}{\lambda_1}}|\kap^1|^{-\frac{\lambda_2}{\lambda_1}}\kap^2 v_2.
\]
and
$$
N=\sum_{k=\nu}^d \bar N^k_0 v_k.
$$
\end{lemma}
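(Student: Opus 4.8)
The plan is to pass to the eigencoordinates $y^1,\dots,y^d$ in which the linear part of~\eqref{eq:main_conjugated_system} is diagonal, so that Duhamel's formula gives, for each $j$ and each $t$ before the exit from $V$,
\[
Y^j_\eps(t)=e^{\lambda_j t}\Big(Y^j_\eps(0)+\eps\int_0^t e^{-\lambda_j s}B^j(Y_\eps(s))\,dW(s)+\eps^2\int_0^t e^{-\lambda_j s}C^j(Y_\eps(s))\,ds\Big).
\]
I would first treat the unstable coordinate $j=1$. Since $y_0\in L^-$ we have $Y^1_\eps(0)=\eps^\alpha\xi^1_\eps$; because $\lambda_1>0$ the stochastic integral has uniformly bounded quadratic variation, and splitting it at a large fixed time $T$ and using that $Y_\eps$ stays uniformly close to $S^s_Ay_0$ on $[0,T]$ (a finite-interval linearization estimate of the type in~\cite{Blagoveschenskii:MR0139204}), it converges in distribution to the Gaussian $N^1_0$ of~\eqref{eq:Gaussian_along_the_stable}. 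Hence $Y^1_\eps(t)=\eps^\alpha e^{\lambda_1 t}(\kap^1+o(1))$ uniformly for $t$ in a neighborhood of the exit, with $\kap^1$ as in~\eqref{eq:kappa}, and the nondegeneracy hypotheses force $\kap^1\ne0$ a.s.\ (Gaussian nondegeneracy of $N^1_0$ when $\alpha=1$; the assumption $\Pp\{\xi^1_0\ne0\}=1$ when $\alpha<1$). Solving $|Y^1_\eps(t_\eps)|=R$ then gives $\Pp\{t_\eps<t_{V,\eps}\}\to1$, the asymptotics $t_\eps=\tfrac{\alpha}{\lambda_1}\ln\tfrac1\eps+\tfrac1{\lambda_1}\ln\tfrac{R}{|\kap^1|}+o(1)$, and $\sgn Y^1_\eps(t_\eps)=\sgn\kap^1$ with high probability, which yields the limit of $t_\eps-\tfrac{\alpha}{\lambda_1}\ln\tfrac1\eps$ and $y'=\sgn(\kap^1)R$. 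The same control of $Y^1_\eps$ together with the smallness of the remaining coordinates (next paragraph) also gives the near-exit tracking $Y_\eps(t_\eps-u)=S^{-u}_A(y'v_1)+o(1)$ on compact $u$-intervals, by running the equation over the short interval $[t_\eps-u,t_\eps]$ and invoking a Gronwall estimate.

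Next I would substitute these asymptotics into the Duhamel formula for the coordinates $j\ge2$. For each such $j$ there are three competing contributions. The deterministic term $e^{\lambda_j t_\eps}Y^j_\eps(0)$ equals, for $2\le j<\nu$, $\eps^{\alpha(1-\lambda_j/\lambda_1)}R^{\lambda_j/\lambda_1}|\kap^1|^{-\lambda_j/\lambda_1}\kap^j(1+o(1))$ (here $Y^j_\eps(0)=\eps^\alpha\xi^j_\eps$ since $y_0\in L^-$, and the associated martingale converges to $N^j_0$), and, for $j\ge\nu$, $\eps^{-\alpha\lambda_j/\lambda_1}R^{\lambda_j/\lambda_1}|\kap^1|^{-\lambda_j/\lambda_1}y_0^j(1+o(1))$ (here $Y^j_\eps(0)=y_0^j$). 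The martingale term is of order $\eps$, and the $C$-term is of strictly higher order in $\eps$, hence negligible. Since $\lambda_2>\dots>\lambda_{\nu-1}>0$ the slowest-decaying unstable deterministic term is the $v_2$-component $\eta_+$, and since $0>\lambda_\nu>\dots>\lambda_d$ the slowest-decaying stable deterministic term is the $v_\nu$-component $\eta_-$. Comparing the exponents $\alpha(1-\lambda_2/\lambda_1)$ (present only for $\nu>2$), $-\alpha\lambda_\nu/\lambda_1$, and $1$ produces exactly the four regimes in the statement: $\beta$ is their minimum, and $\xi'$ is the sum of the terms attaining it — $\eta_+$, $\eta_-$, and/or the Gaussian $N=\sum_{k\ge\nu}\bar N^k_0 v_k$ built from the stable martingale terms. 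Along the way one checks that $|Y^j_\eps(t)|<R$ for all $j\ne1$ and $t\le t_\eps$ with high probability, so that $t_\eps$ is genuinely the exit time from $\tilde V$.

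The delicate point, on which I would spend most of the effort, is the stable martingale term $\eps e^{\lambda_j t_\eps}\int_0^{t_\eps}e^{-\lambda_j s}B^j(Y_\eps(s))\,dW(s)$ with $\lambda_j<0$. Here the weight $e^{\lambda_j(t_\eps-s)}$ concentrates the integral in a window of fixed length just before the \emph{random} time $t_\eps$, over which $Y_\eps$ is close to the backward orbit $s\mapsto S^{s-t_\eps}_A(\sgn(\kap^1)Rv_1)$ of the exit point; the limiting law is therefore Gaussian with covariance $\int_{-\infty}^0 e^{-(\lambda_k+\lambda_j)s}(BB^*)^{kj}(S_A^s(\pm Rv_1))\,ds$, conditioned on $\sgn\kap^1$, i.e.\ exactly the vector $(\bar N^\nu_0,\dots,\bar N^d_0)$ of the statement. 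To make this rigorous I would cut the integral at a deterministic time $t_*(\eps)=\tfrac{\alpha}{\lambda_1}\ln\tfrac1\eps-T$, bound the contribution of $[0,t_*]$ using the exponential weight, and on $[t_*,t_\eps]$ replace $Y_\eps$ by the reference backward orbit (via the near-exit tracking established above) before letting $\eps\to0$ and then $T\to\infty$; the separation between this late window and the $O(1)$ window that determines $\kap^1,\dots,\kap^{\nu-1}$ is what yields their asymptotic independence and the conditional-Gaussian structure of $\bar N_0$.

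Throughout, the stochastic integrals with $B$ and $C$ evaluated along $Y_\eps$ are handled by freezing the coefficients along the relevant deterministic reference orbits — $S^s_Ay_0$ for small $s$, the origin for intermediate $s$, and $S^{s-t_\eps}_A(y'v_1)$ for $s$ near $t_\eps$ — and controlling the discrepancy by Gronwall estimates together with the a priori confinement of $Y_\eps$ to $V$. Assembling the finitely many jointly convergent building blocks then gives the asserted joint convergence in distribution of $\big(Y^1_\eps(t_\eps),\,\eps^{-\beta}\Pi_L Y_\eps(t_\eps),\,t_\eps-\tfrac{\alpha}{\lambda_1}\ln\tfrac1\eps\big)$ to $(y',\xi',\zeta)$. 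The main obstacle is precisely the analysis of the stable-direction fluctuations anchored at the random exit point: everything else is a careful but routine bookkeeping of powers of $\eps$.
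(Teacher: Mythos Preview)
Your overall strategy is correct and matches the paper's: Duhamel in eigencoordinates, extract the $j=1$ asymptotics to get $t_\eps$ and $\sgn\kap^1$, then read off the dominant power of $\eps$ in each remaining coordinate. The one genuine structural difference is how the analysis is organized. The paper does \emph{not} work directly up to $t_\eps$; it introduces an intermediate stopping time $\tau_\eps=\inf\{t:|Y^k_\eps(t)-(S^t_Ay_0)^k|=\eps^{\bar\alpha}\ \text{for some }k\}$ with an auxiliary exponent $\bar\alpha\in(0,\alpha)$, shows $\tau_\eps\to\infty$ and $\tau_\eps=\tau^1_\eps$ with high probability, records the precise asymptotics of every coordinate at $\tau_\eps$, and then restarts via the strong Markov property, tracking $\bar Y_\eps(t)=Y_\eps(\tau_\eps+t)$ against the \emph{forward} orbit $S^t_A(\pm\eps^{\bar\alpha}v_1)$. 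The stable-direction Gaussian $\bar N_0$ is then obtained by freezing $B$ along this forward reference orbit and a change of variables $r=s-\tau'_\eps$.

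What this buys relative to your plan: (i) the a~priori confinement $|Y^j_\eps(t)|<R$ and the uniform tracking are proved in two short steps (close to $S^t_Ay_0$ before $\tau_\eps$, close to $S^t_A(\pm\eps^{\bar\alpha}v_1)$ after) rather than via a backward argument anchored at the random time $t_\eps$; (ii) the conditional independence of $(\bar N^\nu_0,\dots,\bar N^d_0)$ from $(|\kap^1|,\kap^2,\dots,\kap^{\nu-1})$ comes for free from the strong Markov restart, whereas in your scheme it relies on the ``separation of windows'' heuristic that still needs to be made rigorous. Your single-phase approach with a deterministic cut at $t_*(\eps)=\tfrac{\alpha}{\lambda_1}\ln\tfrac1\eps-T$ can certainly be carried through, but the circularity you already noticed (near-exit tracking uses smallness of the stable coordinates, which in turn uses near-exit tracking) has to be broken by first establishing crude $o(1)$ bounds on all coordinates and only then upgrading to the precise Gaussian limit; the paper's two-phase decomposition sidesteps that bootstrap entirely.
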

\begin{remark}\rm Even if the nondegeneracy assumptions do not hold, a version of this lemma
still holds true. This will be obvious from the proof, and, for brevity, we omit a variety of related results on these degenerate 
situations.  
\end{remark}
\begin{remark} \label{rem:non-markov}\rm 
We see that of all random variables $N$, $\eta_-$, $\eta_+$,
involved in the description of the limit, conditioned on $\sgn(\kap^1)$, only $N$ does not depend
in any way on the initial distribution data given by $y_0$, $\alpha$,
and $\xi_0$. This guarantees
the Markovian loss of memory for the case $[\nu=2; -\alpha\lambda_\nu>\lambda_1],$ and potentially leads to non-Markov
situations in all the other cases, see Section~\ref{sec:discussion} for further discussion.
\end{remark}

The proof consists of two parts. The first part provides the analysis of the evolution of $Y_\eps$ mostly along the
stable manifold. The second part is mostly responsible for the motion along the unstable manifold of the origin. 

Using It\^o's formula it is easy to verify that Duhamel's principle holds:
$$
Y_\eps(t)=e^{At}Y_\eps(0)+\eps e^{At}\int_0^{t}e^{-As}B(Y_\eps(s))dW(s) +\eps^2e^{At}\int_0^{t}e^{-As}C(Y_\eps(s))ds,
$$
or, equivalently,
\begin{equation}
Y^{k}_\eps(t)=e^{\lambda_kt}\left(y_0^k+\eps^\alpha\xi_\eps^k
+\eps\int_0^{t}e^{-\lambda_k s}B^k(Y_\eps(s))dW(s)+\eps^2\int_0^{t}e^{-\lambda_ks}C^k(Y_\eps(s))ds\right).
\label{eq:Duhamel_for_Y_in_coordinates}
\end{equation}

We start with a study of the outcome of the evolution of $Y_\eps$ along the stable manifold. 
We fix a number $\bar\alpha\in(0,\alpha)$.
Our first goal is to analyze the distribution of $Y_\eps(\tau_\eps)$, where
\[
\tau_\eps=\min\{\tau_\eps^k,\ k=1,\ldots,d\},
\]
and for every $k=1,\ldots,d$,
\[
\tau_\eps^k=\inf\{t:|Y^k_{\eps}(t)-(S^t_Ay_0)^k|=\eps^{\bar\alpha}\}.
\]


\begin{lemma}\label{lm:convergence_along_stable}
\[
\sup_{t\ge0}|Y_{\eps}(\tau_\eps\wedge t)-(S^{t\wedge\tau_\eps}_Ay_0)|\stackrel{\Pp}{\to}0,\quad \eps\to\infty.
\]
\end{lemma}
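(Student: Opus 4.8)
The plan is to show that, with probability tending to $1$, the process $Y_\eps$ stays $\eps^{\bar\alpha}$-close to the deterministic orbit $S^t_A y_0$ for all $t \le \tau_\eps$, and simultaneously that $\tau_\eps$ itself is forced to be large in a controlled way; then the stopping-time definition of $\tau_\eps$ does the rest. The key observation is that the deterministic orbit $S^t_A y_0 = e^{At}y_0$ lies in $L^-$ (the span of the stable eigenvectors), since $y_0 \in \tilde V \cap L^-$, so each coordinate $(S^t_A y_0)^k = e^{\lambda_k t} y_0^k$ either decays to $0$ (for $k \ge \nu$) or is identically $0$ (for $k < \nu$). In particular the orbit never leaves $\tilde V$, so up to time $\tau_\eps$ we have $Y_\eps(t) \in V$ and the coefficients $B, C$ are bounded; this makes all the stochastic estimates below uniform.

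First I would write, using the Duhamel representation \eqref{eq:Duhamel_for_Y_in_coordinates}, the discrepancy
\[
D^k_\eps(t) := Y^k_\eps(t) - (S^t_A y_0)^k = e^{\lambda_k t}\left(\eps^\alpha \xi^k_\eps + \eps M^k_\eps(t) + \eps^2 R^k_\eps(t)\right),
\]
where $M^k_\eps(t) = \int_0^t e^{-\lambda_k s} B^k(Y_\eps(s))\,dW(s)$ and $R^k_\eps(t) = \int_0^t e^{-\lambda_k s} C^k(Y_\eps(s))\,ds$, all stopped at $\tau_\eps$. For the stable coordinates $k \ge \nu$ one has $\lambda_k < 0$, so $e^{\lambda_k t}$ is bounded by $1$ and $e^{\lambda_k t}\int_0^t e^{-\lambda_k s}(\cdots)\,ds$ is a contraction-type integral; the martingale term, after the time-change/Dambis--Dubins--Schwarz or a direct maximal inequality on the exponentially-weighted integral $e^{\lambda_k t}M^k_\eps(t)$, has a second moment bounded uniformly in $t$ by a constant times $\int_0^\infty e^{2\lambda_k s}\,ds < \infty$ (here boundedness of $B$ on $V$ is used). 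Hence $\sup_{t\le \tau_\eps}|e^{\lambda_k t}M^k_\eps(t)|$ and the drift term are $O_\Pp(1)$, and multiplying by $\eps$ (resp. $\eps^2$), together with $\eps^\alpha\xi^k_\eps = o_\Pp(1)$ since $\alpha > \bar\alpha > 0$, gives $\sup_{t\le\tau_\eps}|D^k_\eps(t)| = o_\Pp(\eps^{\bar\alpha})$ for every stable $k$. The unstable coordinates $k < \nu$ are the delicate ones: here $\lambda_k > 0$, $y_0^k = 0$, so $D^k_\eps(t) = Y^k_\eps(t) = e^{\lambda_k t}(\eps^\alpha\xi^k_\eps + \eps M^k_\eps(t) + \eps^2 R^k_\eps(t))$, and the prefactor $e^{\lambda_k t}$ grows. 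However, the martingale $M^k_\eps$ with $\lambda_k > 0$ converges: $M^k_\eps(t) = \int_0^t e^{-\lambda_k s} B^k\,dW$ is an $L^2$-bounded martingale (its quadratic variation is bounded by $\|B\|_\infty^2\int_0^\infty e^{-2\lambda_k s}\,ds$), so $\sup_{t\ge 0}|M^k_\eps(t)| = O_\Pp(1)$, uniformly in $\eps$; likewise $R^k_\eps$ is $O_\Pp(1)$. Therefore $|Y^k_\eps(t)| \le e^{\lambda_k t}\cdot O_\Pp(\eps^{\bar\alpha'})$ for any $\bar\alpha' \in (\bar\alpha, \alpha)$, and this is $\le \eps^{\bar\alpha}$ as long as $e^{\lambda_k t}$ has not yet grown past $\eps^{\bar\alpha - \bar\alpha'}$, i.e. for $t$ up to order $(\lambda_k)^{-1}(\bar\alpha' - \bar\alpha)\ln(\eps^{-1})$, which tends to infinity. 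This is precisely the mechanism that makes $\tau_\eps \to \infty$ in probability.

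Putting it together: on the event (of probability $\to 1$) that all the $O_\Pp(1)$ quantities above are bounded by a fixed large constant $C$, the stopping time $\tau_\eps = \min_k \tau^k_\eps$ is attained (if at all) by one of the unstable coordinates $k < \nu$ and satisfies $\tau_\eps \ge c\ln(\eps^{-1}) \to \infty$; and on that same event, for every $t \le \tau_\eps$ and every $k$ we have $|D^k_\eps(t)| \le \eps^{\bar\alpha}$ with room to spare, in fact $\sup_{t\le\tau_\eps}|Y_\eps(t) - S^t_A y_0| \to 0$ in probability because the stable-coordinate discrepancies are $o_\Pp(\eps^{\bar\alpha})$ and the unstable coordinates are bounded by $\eps^{\bar\alpha} \to 0$ by definition of $\tau_\eps$. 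Since for $t \ge \tau_\eps$ the quantity $Y_\eps(\tau_\eps\wedge t) - S^{t\wedge\tau_\eps}_A y_0$ is frozen at its value at $\tau_\eps$, the claimed uniform convergence over all $t \ge 0$ follows. The main obstacle is the careful handling of the exponentially-weighted stochastic integrals in the stable directions — getting a maximal inequality for $\sup_t |e^{\lambda_k t}\int_0^t e^{-\lambda_k s}B^k\,dW|$ that is uniform in $\eps$ and does not blow up — together with the bootstrap argument that the process has not left $V$ before $\tau_\eps$, so that the boundedness of $B$ and $C$ may legitimately be invoked throughout; this is presumably where the auxiliary results of Section~\ref{sec:proof_tau_to_infty} (that $\tau_\eps \to \infty$) enter.
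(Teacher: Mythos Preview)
Your argument is correct in spirit but vastly overcomplicated, and it obscures the one-line proof the paper actually gives. Recall that $\tau_\eps=\min_{k=1,\ldots,d}\tau_\eps^k$ with
\[
\tau_\eps^k=\inf\{t:\ |Y^k_\eps(t)-(S^t_Ay_0)^k|=\eps^{\bar\alpha}\}.
\]
By continuity of $Y_\eps$ and $S^\cdot_Ay_0$, this means that for every $t\le\tau_\eps$ and \emph{every} coordinate $k=1,\ldots,d$ (stable and unstable alike) one has $|Y^k_\eps(t)-(S^t_Ay_0)^k|\le\eps^{\bar\alpha}$. Hence
\[
\sup_{t\ge0}|Y_\eps(\tau_\eps\wedge t)-S^{t\wedge\tau_\eps}_Ay_0|\le d\,\eps^{\bar\alpha}\to 0,
\]
deterministically, not merely in probability. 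That is the whole proof.

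You do invoke this ``by definition of $\tau_\eps$'' argument, but only for the unstable coordinates $k<\nu$; you then launch into a separate Duhamel/maximal-inequality analysis for the stable coordinates to get $o_\Pp(\eps^{\bar\alpha})$ there. That analysis is fine---indeed it is essentially the content of the \emph{next} lemma in the paper, Lemma~\ref{lm:tau-to-infty}, showing $\tau_\eps\stackrel{\Pp}{\to}\infty$---but it is entirely unnecessary for the present statement, since the stopping time $\tau_\eps$ already controls all $d$ coordinates. In short: you have folded the proof of Lemma~\ref{lm:tau-to-infty} into what should be a triviality, and in doing so made the logical structure (Lemma~\ref{lm:convergence_along_stable} is immediate; Lemma~\ref{lm:tau-to-infty} is the substantive estimate) harder to see.
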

\begin{proof} This statement is obvious due to 
\begin{equation}
\label{eq:bounded_by_eps_alpha}
\sup_{t\ge0}|Y_{\eps}(\tau_\eps\wedge t)-(S^{t\wedge\tau_\eps}_Ay_0)|\le d\eps^{\alpha},
\end{equation}
which follows from the definition of $\tau_\eps$.
\end{proof}

\begin{lemma}\label{lm:tau-to-infty}
\[
\tau_\eps\stackrel{\Pp}{\to}\infty,\quad \eps\to\infty.
\]
\end{lemma}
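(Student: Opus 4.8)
The plan is to show that for every fixed $T>0$, the probability that $\tau_\eps<T$ tends to $0$ as $\eps\to0$; since $T$ is arbitrary this gives $\tau_\eps\stackrel{\Pp}{\to}\infty$. Write $D_\eps(t)=Y_\eps(t)-S^t_Ay_0$ for the deviation of the conjugated diffusion from the deterministic linear trajectory. Subtracting the Duhamel representation \eqref{eq:Duhamel_for_Y_in_coordinates} from the deterministic solution $S^t_Ay_0$ of $\dot y=Ay$ (with the same initial point $y_0$, note $(S^t_Ay_0)^k=e^{\lambda_kt}y_0^k$), the $k$-th coordinate of $D_\eps$ is
\[
D^k_\eps(t)=e^{\lambda_k t}\eps^\alpha\xi^k_\eps
+\eps\, e^{\lambda_k t}\int_0^t e^{-\lambda_k s}B^k(Y_\eps(s))\,dW(s)
+\eps^2 e^{\lambda_k t}\int_0^t e^{-\lambda_k s}C^k(Y_\eps(s))\,ds.
\]

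First I would handle the stopped process. On the event $\{\tau_\eps\ge t\}$ we have $|D^k_\eps(s)|\le\eps^{\bar\alpha}$ for all $s\le t$, so $Y_\eps(s)$ stays in a bounded neighborhood of the compact deterministic arc $\{S^s_Ay_0:0\le s\le T\}\subset\tilde V$, where $B$ and $C$ are bounded, say by a constant $M$ (this uses that $\tilde V\subset V=f(U)$ and the boundedness of $B,C$ on $f(U)$ noted after \eqref{eq:main_conjugated_system}). Therefore, on $0\le t\le T\wedge\tau_\eps$, each of the three terms is controlled: the first by $e^{\lambda_k T}\eps^\alpha|\xi^k_\eps|$, the drift term by $\eps^2 M\,e^{\lambda_k T}\!\int_0^T e^{-\lambda_k s}ds$, and for the stochastic term I would use the exponential martingale / Burkholder--Davis--Gundy or Doob's maximal inequality applied to the martingale $M^k_\eps(t)=\int_0^{t\wedge\tau_\eps} e^{-\lambda_k s}B^k(Y_\eps(s))\,dW(s)$, whose quadratic variation is bounded on $[0,T]$ by $M^2\!\int_0^T e^{-2\lambda_k s}ds=:K_{k,T}$. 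Then
\[
\Pp\Big\{\sup_{0\le t\le T\wedge\tau_\eps}\Big|\eps\, e^{\lambda_k t}\!\int_0^t e^{-\lambda_k s}B^k(Y_\eps(s))dW(s)\Big|\ge \tfrac{1}{3}\eps^{\bar\alpha}\Big\}
\le \Pp\Big\{\sup_{0\le t\le T}|M^k_\eps(t)|\ge \tfrac{\eps^{\bar\alpha-1}}{3\,e^{\lambda_k T}}\Big\}
\le \frac{9\,e^{2\lambda_k T}K_{k,T}}{\eps^{2(\bar\alpha-1)}}=C_{k,T}\,\eps^{2(1-\bar\alpha)}\to0,
\]
since $\bar\alpha<\alpha\le 1$ forces $1-\bar\alpha>0$. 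The first term is $O(\eps^\alpha)$ times a tight random variable ($\xi_\eps$ converges in distribution, hence is tight), so it is $\ll\eps^{\bar\alpha}$ with probability tending to $1$; the drift term is $O(\eps^2)\ll\eps^{\bar\alpha}$ deterministically.

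Combining the three estimates, $\Pp\{\sup_{0\le t\le T\wedge\tau_\eps}|D^k_\eps(t)|\ge\eps^{\bar\alpha}\}\to0$ for each $k$, hence $\Pp\{\sup_{0\le t\le T\wedge\tau_\eps}|D_\eps(t)|\ge \tfrac{\eps^{\bar\alpha}}{\sqrt d}\,\}\to0$ after summing over $k$. But by definition of $\tau_\eps=\min_k\tau^k_\eps$, on the event $\{\tau_\eps<T\}$ there is some $k$ and some $s=\tau^k_\eps\le\tau_\eps<T$ with $|D^k_\eps(s)|=\eps^{\bar\alpha}$, i.e.\ $\sup_{0\le t\le T\wedge\tau_\eps}|D^k_\eps(t)|\ge\eps^{\bar\alpha}$; by continuity of paths the supremum is actually attained and equals $\eps^{\bar\alpha}$ on $\{\tau_\eps<T\}$. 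Hence $\{\tau_\eps<T\}\subset\bigcup_k\{\sup_{0\le t\le T\wedge\tau_\eps}|D^k_\eps(t)|\ge\eps^{\bar\alpha}\}$, whose probability tends to $0$. Letting $T\to\infty$ finishes the proof. The main obstacle is purely bookkeeping: making sure the stopped stochastic integral is genuinely a martingale on $[0,T]$ with the stated quadratic-variation bound (which is why stopping at $\tau_\eps$ before estimating $B$ is essential), and that all constants $C_{k,T}$ are finite and $\eps$-independent — no subtle probabilistic input beyond Doob's inequality and tightness of $\xi_\eps$ is needed.
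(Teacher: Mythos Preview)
Your proof is correct and follows essentially the same route as the paper's: fix $T$, decompose $D^k_\eps$ via Duhamel into the initial-data, stochastic, and drift pieces, and bound each against the threshold $\eps^{\bar\alpha}$ using tightness of $\xi_\eps$, a maximal/BDG inequality on the stopped martingale (yielding the same $O(\eps^{2(1-\bar\alpha)})$), and a trivial deterministic bound, respectively. One cosmetic slip: the factor $e^{\lambda_k T}$ in your bounds should be $1\vee e^{\lambda_k T}$ so that the estimate is valid also for the contracting directions $k\ge\nu$ (where $\lambda_k<0$), exactly as the paper writes it; this does not affect the argument.
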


A proof based on \eqref{eq:Duhamel_for_Y_in_coordinates} is given in Section~\ref{sec:proof_tau_to_infty}.

\begin{lemma}\label{lm:tau=tau_1}
\[
\Pp\{\tau_\eps=\tau_\eps^1\}{\to}1,\quad \eps\to\infty.
\]
\end{lemma}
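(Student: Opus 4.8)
\textbf{Proof proposal for Lemma~\ref{lm:tau=tau_1}.}

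The plan is to show that among the exit times $\tau_\eps^k$, the one corresponding to the top unstable coordinate $k=1$ is, with probability tending to $1$, the smallest. Intuitively this is because, by Lemma~\ref{lm:tau-to-infty}, $\tau_\eps\to\infty$ in probability, so we may work on a time interval $[0,\tau_\eps]$ that is large; on such an interval the deterministic trajectory $S^t_Ay_0$, which stays on $L^-=\Span\{v_\nu,\ldots,v_d\}$, is exponentially contracting with all its nonzero coordinates decaying like $e^{\lambda_k t}$, $k\ge\nu$, while the noise-driven fluctuations in coordinate $k$ grow or decay according to the sign of $\lambda_k$. The key point is that only the coordinates $k=1,\ldots,\nu-1$ have $\lambda_k>0$, and among these the fluctuation in coordinate $1$ grows fastest, like $\eps e^{\lambda_1 t}$ times a nondegenerate Gaussian-type factor; hence $|Y^1_\eps(t)-(S^t_Ay_0)^1|$ reaches the level $\eps^{\bar\alpha}$ strictly before any other coordinate's deviation does.

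The steps I would carry out, in order, are as follows. First, using Duhamel's formula \eqref{eq:Duhamel_for_Y_in_coordinates}, write, for each $k$,
\[
Y^k_\eps(t)-(S^t_Ay_0)^k=e^{\lambda_k t}\Bigl(\eps^\alpha\xi_\eps^k+\eps\int_0^t e^{-\lambda_k s}B^k(Y_\eps(s))\,dW(s)+\eps^2\int_0^t e^{-\lambda_k s}C^k(Y_\eps(s))\,ds\Bigr),
\]
valid for $t\le\tau_\eps$ (where, by Lemma~\ref{lm:convergence_along_stable}, $Y_\eps$ stays within $d\eps^\alpha$ of the bounded deterministic orbit, so $B,C$ are bounded and $B$ uniformly nondegenerate along the relevant region). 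Second, for the \emph{stable} coordinates $k\ge\nu$ one has $\lambda_k<0$, so the stochastic integral $\int_0^t e^{-\lambda_k s}B^k\,dW(s)$ has quadratic variation growing like $e^{-2\lambda_k t}$; thus $|Y^k_\eps(t)-(S^t_Ay_0)^k|$ is of order $\eps$ times something like $e^{-\lambda_k t}$ times a bounded-in-probability factor --- but it only matters that it stays below $\eps^{\bar\alpha}$ for times up to roughly $\frac{\alpha-\bar\alpha}{-\lambda_k}\ln(1/\eps)$; the point is that this takes at least as long as for $k=1$ because $-\lambda_k$ competes against the much larger $\lambda_1$. Third, and this is the crux, for the \emph{unstable} coordinates $2\le k\le\nu-1$ compare with $k=1$: the deviation in coordinate $k$ is at most $\eps e^{\lambda_k t}\Xi_k^\eps(t)$ where $\Xi_k^\eps$ is bounded in probability uniformly over the relevant horizon, while the deviation in coordinate $1$ is $\eps e^{\lambda_1 t}\Xi_1^\eps(t)$ with $\Xi_1^\eps$ bounded away from $0$ in probability for large $t$ (this uses that the martingale $\int_0^t e^{-\lambda_1 s}B^1\,dW(s)$ converges a.s.\ to a nondegenerate limit, by the argument already sketched in Section~\ref{sec:2d-linear-nonrigorous}, together with the nondegeneracy of $B$); since $\lambda_1>\lambda_k$, for large $t$ we have $e^{\lambda_1 t}\gg e^{\lambda_k t}$, so coordinate $1$ hits the threshold $\eps^{\bar\alpha}$ first. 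Fourth, make this quantitative: fix $T$ large, show $\Pp\{\tau_\eps>T\}\to1$ (Lemma~\ref{lm:tau-to-infty}), then on $\{\tau_\eps>T\}$ argue via the explicit bounds that for $t\in[T,\tau_\eps]$ the coordinate-$1$ deviation dominates, so that the first $k$ to reach level $\eps^{\bar\alpha}$ must be $k=1$; a standard Borel--Cantelli / uniform-bound argument over a $\ln(1/\eps)$-scale horizon closes the estimate. Conclude $\Pp\{\tau_\eps=\tau_\eps^1\}\to1$.

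I expect the main obstacle to be the third step: controlling the \emph{lower bound} on the growth of $|Y^1_\eps(t)-(S^t_Ay_0)^1|$ uniformly in $t$ over a logarithmically long horizon, so that one can assert it reaches $\eps^{\bar\alpha}$ before coordinate $1$ ``wastes'' too much time near zero. This requires showing that the convergent martingale $M^1_\eps(t)=\int_0^t e^{-\lambda_1 s}B^1(Y_\eps(s))\,dW(s)$ has a limit that is bounded away from zero with probability close to $1$, uniformly as $\eps\to0$ --- equivalently, ruling out that the integrated noise cancels against the initial term $\eps^{\alpha-1}\xi^1_\eps$ and the drift correction. Here the nondegeneracy of $B$ and the fact that the quadratic variation $\int_0^\infty e^{-2\lambda_1 s}(BB^*)^{11}(S^s_A y_0)\,ds$ is strictly positive are essential, and one must combine them with the a.s.\ closeness of $Y_\eps$ to the deterministic orbit (Lemma~\ref{lm:convergence_along_stable}) to transfer statements about the ``frozen-coefficient'' Gaussian martingale to the actual process. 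The remaining steps (exponential moment bounds for the stable-coordinate martingales and the drift terms, and the union bound over $k$) are routine given the boundedness of $B$ and $C$ near the orbit.
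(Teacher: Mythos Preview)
Your overall plan coincides with the paper's own sketch: the paper says only that the argument is ``similar to the proof of Lemma~\ref{lm:tau-to-infty}'' and rests on the fact that, since $\tau_\eps\stackrel{\Pp}{\to}\infty$, the prefactor $e^{\lambda_1 t}$ outpaces every other $e^{\lambda_k t}$, so the first coordinate is the first to reach the level $\eps^{\bar\alpha}$. Your plan is thus more detailed than what the paper actually provides, and your identification of the lower bound on $|Y^1_\eps(t)-(S^t_Ay_0)^1|$ (via the nondegenerate Gaussian limit of $\int_0^\infty e^{-\lambda_1 s}B^1\,dW(s)$) as the one nontrivial point is accurate.

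One step in your write-up is incorrect as stated and would not go through. For $k\ge\nu$ you claim the deviation $|Y^k_\eps(t)-(S^t_Ay_0)^k|$ is of order $\eps\,e^{-\lambda_k t}=\eps\,e^{|\lambda_k|t}$, and then argue that this stays below $\eps^{\bar\alpha}$ for long enough because ``$-\lambda_k$ competes against the much larger $\lambda_1$''. But nothing in the hypotheses forces $|\lambda_k|<\lambda_1$ for $k\ge\nu$; with strong contraction ($|\lambda_k|\gg\lambda_1$) your estimate would blow past $\eps^{\bar\alpha}$ long before coordinate~1 does. The fix is that your growth rate is off by exactly the prefactor you dropped: the integral $\int_0^t e^{-\lambda_k s}B^k\,dW(s)$ has standard deviation of order $e^{|\lambda_k|t}$, but it is multiplied by $e^{\lambda_k t}$ in Duhamel's formula, so $R^k_\eps(t)=e^{\lambda_k t}\int_0^t e^{-\lambda_k s}B^k\,dW(s)$ is $O_\Pp(1)$ pointwise and $o_\Pp(\eps^{-p})$ uniformly over a $\ln(1/\eps)$-horizon (this is exactly the estimate~\eqref{eq:o(eps-2p)} that appears later in the paper). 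Hence the stable-coordinate deviation is $\eps^\alpha e^{\lambda_k t}|\xi^k_\eps|+O_\Pp(\eps^{1-p})\ll\eps^{\bar\alpha}$ regardless of how large $|\lambda_k|$ is, and these coordinates simply never compete. With this correction the rest of your outline is sound.
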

A sketch of a proof is also  given in Section~\ref{sec:proof_tau_to_infty}.

Let us now take a closer look at the stochastic integral term in the expression~\eqref{eq:Duhamel_for_Y_in_coordinates}.
For $k<\nu$ we introduce
\begin{equation}
\label{eq:Nk}
N^k_\eps(t)=\int_0^t e^{-\lambda_k s}B^k(Y_\eps(s))dW(s)=\sum_{m=1}^d\int_0^t e^{-\lambda_k s}B^k_m(Y_\eps(s))dW^m(s)
\end{equation}
and
\[
M^k(t)=\int_0^t e^{-\lambda_k s}B^k(S^s_Ay_0)dW(s)=\sum_{m=1}^d\int_0^t e^{-\lambda_k s}B^k_m(S^s_Ay_0)dW^m(s).
\]
A straightforward application based on BDG inequalities (see e.g.~\cite[Theorem 3.28, Chapter 3]{Karatzas--Shreve}),
local Lipschitzness of $B$, and \eqref{eq:bounded_by_eps_alpha} implies that, as $\eps\to 0$, 
\[
\sup_{t\le\tau_\eps}\left|N^k_\eps(t)-M^k(t)\right|=\sup_{t\le\tau_\eps}
\left|\int_0^t e^{-\lambda_k s}(B^k(Y_\eps(s))-B^k(S^s_Ay_0))dW(s)\right|\stackrel{\Pp}{\to}0.
\]
Lemma~\ref{lm:tau-to-infty}
implies that
the terminal value $N_\eps(\tau_\eps)$
converges to  $M(\infty)$. 
Computing $\E M^k(\infty) M^j(\infty)$, we see that $\Law(M(\infty))=\Law(N_0)$,
where~$N_0$ is a centered Gaussian  vector defined in \eqref{eq:Gaussian_along_the_stable}. Therefore,
\[
N_\eps(\tau_\eps)\stackrel{\Law}{\to} N_0,\quad\eps\to0.
\]
We also notice that
\[
\sup_{t\le\tau_\eps}\left|\eps^2\int_0^{t}e^{-\lambda_ks}C^k(Y_\eps(s))ds\right|=o_\Pp(\eps),
\]
where we use the notation $\phi(\eps)=o_\Pp(\psi(\eps))$ for any families of random variables $\phi(\eps),\psi(\eps),\eps>0$ such that
$\frac{\phi(\eps)}{\psi(\eps)}\stackrel{\Pp}{\to}0$ as $\eps\to0$.

Therefore, for $k<\nu$ we have
\begin{equation}
Y^{k}_\eps(\tau_\eps)=e^{\lambda_k\tau_\eps}(\eps^\alpha\xi_\eps^k+\eps N^k_\eps(\tau_\eps)+o_\Pp(\eps)).
\label{eq:Y_at_tau}
\end{equation}
We denote
\begin{equation*}
\kap^k_\eps = Y^{k}_\eps(\tau_\eps)e^{-\lambda_k\tau_\eps}\eps^{-\alpha},
\end{equation*}
so that
\[
\kap^k_\eps=\xi_\eps^k+\eps^{1-\alpha} N^k_\eps(\tau_\eps)+o_\Pp(\eps^{1-\alpha}),
\]
and rewrite~\eqref{eq:Y_at_tau} as
\begin{equation}
Y^{k}_\eps(\tau_\eps)=e^{\lambda_k\tau_\eps}\eps^\alpha\kap^k_\eps.
\label{eq:Y_at_tau_via_kappa}
\end{equation}

Lemma~\ref{lm:tau=tau_1} implies that 
\begin{equation}
\label{eq:Y1(tau_eps)}
\Pp\{|Y^{1}_\eps(\tau_\eps)|=\eps^{\bar\alpha}\}\to1.
\end{equation}
 So, taking logarithms of
\begin{equation*}
e^{\lambda_1\tau_\eps}\eps^\alpha|\kap^1_\eps|=\eps^{\bar\alpha}
\end{equation*}
we see that with probability approaching $1$, as $\eps\to0$, 
\begin{equation}
\label{eq:tau-asymptotics}
\tau_\eps-\frac{\bar\alpha-\alpha}{\lambda_1}\ln\eps=-\frac{\ln|\kap^1_\eps|}{\lambda_1} 
\end{equation}
and
\begin{equation}
\label{eq:sign_of_Y}
Y_\eps^1(\tau_\eps)\eps^{-\bar\alpha}= \sgn(\kap^1_\eps),
\end{equation}

Plugging~\eqref{eq:tau-asymptotics} into~\eqref{eq:Y_at_tau_via_kappa}, we obtain the following
result:
\begin{lemma} For $1<k<\nu$,
\label{lm:positive_exponents_at_exit}
\begin{equation*}
Y^{k}_\eps(\tau_\eps)=
\eps^{\frac{\lambda_k}{\lambda_1}(\bar\alpha-\alpha)+\alpha}
\kap^k_\eps|\kap^1_\eps|^{-\frac{\lambda_k}{\lambda_1}}.
\end{equation*}
\end{lemma}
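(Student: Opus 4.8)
The plan is to derive the lemma directly from the two displays \eqref{eq:Y_at_tau_via_kappa} and \eqref{eq:tau-asymptotics} established immediately above it; no new analytic input is required. First I would record that \eqref{eq:Y_at_tau_via_kappa} is merely the definition of $\kap^k_\eps$ rewritten, hence it holds identically for every coordinate index, in particular both for the index $k$ appearing in the statement and for $k=1$, giving $Y^1_\eps(\tau_\eps)=e^{\lambda_1\tau_\eps}\eps^\alpha\kap^1_\eps$.

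Next I would work on the event $\{|Y^1_\eps(\tau_\eps)|=\eps^{\bar\alpha}\}$, which by \eqref{eq:Y1(tau_eps)}, and hence ultimately by Lemma~\ref{lm:tau=tau_1}, has probability tending to $1$ as $\eps\to0$. On this event $\tau_\eps=\tau_\eps^1$, and taking absolute values in the previous display yields $e^{\lambda_1\tau_\eps}\eps^\alpha|\kap^1_\eps|=\eps^{\bar\alpha}$, i.e.\ \eqref{eq:tau-asymptotics}; in particular this forces $\kap^1_\eps\ne0$ on the event, so that solving for $e^{\lambda_1\tau_\eps}=\eps^{\bar\alpha-\alpha}|\kap^1_\eps|^{-1}$ and then raising both sides to the power $\lambda_k/\lambda_1$ (which is positive since $0<\lambda_k<\lambda_1$) is legitimate and gives $e^{\lambda_k\tau_\eps}=\eps^{\frac{\lambda_k}{\lambda_1}(\bar\alpha-\alpha)}|\kap^1_\eps|^{-\frac{\lambda_k}{\lambda_1}}$.

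Finally I would substitute this expression for $e^{\lambda_k\tau_\eps}$ into \eqref{eq:Y_at_tau_via_kappa} written for the index $k$ and collect the powers of $\eps$, which produces exactly $Y^k_\eps(\tau_\eps)=\eps^{\frac{\lambda_k}{\lambda_1}(\bar\alpha-\alpha)+\alpha}\kap^k_\eps|\kap^1_\eps|^{-\frac{\lambda_k}{\lambda_1}}$. I do not expect a genuine obstacle: all the substantive work — that $\tau_\eps\to\infty$, the identification $\tau_\eps=\tau_\eps^1$ with asymptotically full probability, and the convergence of the stochastic-integral and $\eps^2$-drift terms entering $\kap^k_\eps$ — is already contained in Lemmas~\ref{lm:tau-to-infty} and \ref{lm:tau=tau_1} and in the computation leading to \eqref{eq:Y_at_tau_via_kappa}. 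The only point worth making explicit is that the displayed identity is to be read as valid on the event $\{\tau_\eps=\tau_\eps^1\}$, whose probability approaches $1$; this is precisely the form in which the formula is used afterwards (e.g.\ in the derivation of Lemma~\ref{lm:main_linear_lemma}), where every assertion about $Y^k_\eps(\tau_\eps)$ is a convergence-in-probability or convergence-in-distribution statement.
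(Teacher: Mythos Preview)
Your proposal is correct and follows exactly the paper's approach: the paper's proof is the single sentence ``Plugging~\eqref{eq:tau-asymptotics} into~\eqref{eq:Y_at_tau_via_kappa}, we obtain the following result,'' and you carry out precisely this substitution, with the added (and appropriate) care of noting that the identity is valid on the event $\{\tau_\eps=\tau_\eps^1\}$ of asymptotically full probability.
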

This lemma describes the asymptotics of $Y^{k}_\eps(\tau_\eps),k<\nu$ very
precisely since
due to~\eqref{eq:tau-asymptotics} we have the following obvious statement:
\begin{lemma}\label{lm:convergence_of_kappa}
\[
\left(\xi_\eps,\tau_\eps-\frac{\bar\alpha-\alpha}{\lambda_1}\ln\eps,
\kap^1_\eps,\ldots,\kap^{\nu-1}_\eps\right)
\stackrel{\Law}{\to}
\left(\xi_0,-\frac{\ln|\kap^1|}{\lambda_1},
\kap^1,\ldots,\kap^{\nu-1}\right),
\]
where the random variables in the r.h.s. were defined before the statement of  Lemma~\ref{lm:main_linear_lemma}.
\end{lemma}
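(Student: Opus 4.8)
The plan is to combine the results already assembled in this section, so the proof of Lemma~\ref{lm:convergence_of_kappa} is short. First I would recall that by construction
\[
\kap^k_\eps=\xi^k_\eps+\eps^{1-\alpha}N^k_\eps(\tau_\eps)+o_\Pp(\eps^{1-\alpha}),\qquad k<\nu,
\]
and that $\alpha\in(0,1]$. When $\alpha<1$ the factor $\eps^{1-\alpha}\to0$, so the Gaussian term and the error term both vanish in probability, and $\kap^k_\eps$ has the same limit in law as $\xi^k_\eps$, which by~\eqref{eq:xi_0} and the hypothesis $\Law(\phi_\eps)\Rightarrow\mu$ converges to $\xi^k_0$; this matches the definition~\eqref{eq:kappa} with the indicator $\ONE_{\alpha=1}$ equal to zero. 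When $\alpha=1$ we instead use that $N_\eps(\tau_\eps)\stackrel{\Law}{\to}N_0$, established above via the BDG estimate $\sup_{t\le\tau_\eps}|N^k_\eps(t)-M^k(t)|\stackrel{\Pp}{\to}0$ together with Lemma~\ref{lm:tau-to-infty} giving $M^k(\tau_\eps)\to M^k(\infty)$ and the covariance computation identifying $\Law(M(\infty))=\Law(N_0)$; hence $\kap^k_\eps\to\xi^k_0+N^k_0$, again matching~\eqref{eq:kappa}.

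Next I would handle the joint convergence rather than the marginals. The key point is that $N_0$ is defined on the enlarged probability space to be independent of $\xi_0$, and this independence is genuinely available in the limit: the driving increments $dW$ that produce $N_\eps(\tau_\eps)=\int_0^{\tau_\eps}e^{-\lambda s}B(Y_\eps(s))dW(s)$ are asymptotically decoupled from $\xi_\eps$ because $\xi_\eps$ is a function of the initial datum $\phi_\eps$, which is independent of the Brownian motion $W$ driving~\eqref{eq:sde}, while the stochastic integral up to $\tau_\eps$ is well-approximated (in sup-norm on $[0,\tau_\eps]$) by the non-random integrand process $M(t)=\int_0^t e^{-\lambda s}B(S^s_Ay_0)dW(s)$, so $(\xi_\eps, N_\eps(\tau_\eps))\Rightarrow(\xi_0,N_0)$ jointly with the correct independence structure. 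Then by the continuous mapping theorem applied to $(\xi,N)\mapsto\xi+N\ONE_{\alpha=1}$ we get $(\xi_\eps,\kap^1_\eps,\ldots,\kap^{\nu-1}_\eps)\stackrel{\Law}{\to}(\xi_0,\kap^1,\ldots,\kap^{\nu-1})$.

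Finally I would append the time coordinate. From~\eqref{eq:tau-asymptotics}, which holds on an event of probability tending to $1$ by~\eqref{eq:Y1(tau_eps)}, we have
\[
\tau_\eps-\frac{\bar\alpha-\alpha}{\lambda_1}\ln\eps=-\frac{\ln|\kap^1_\eps|}{\lambda_1}+o_\Pp(1),
\]
so this third coordinate is (up to an $o_\Pp(1)$ term and a negligible event) a fixed continuous function, namely $w\mapsto -\lambda_1^{-1}\ln|w|$, of $\kap^1_\eps$. Since the function $w\mapsto-\lambda_1^{-1}\ln|w|$ is continuous on $\R\setminus\{0\}$ and $\Pp\{\kap^1=0\}=0$ (guaranteed by the nondegeneracy hypotheses $\Pp\{\xi^1_0\ne0\}=1$ when $\alpha<1$, and by nondegeneracy of the Gaussian component $N^1_0$ when $\alpha=1$), another application of the continuous mapping theorem, together with Slutsky's lemma to absorb the $o_\Pp(1)$, upgrades the joint convergence of $(\xi_\eps,\kap^1_\eps,\ldots,\kap^{\nu-1}_\eps)$ to the full joint convergence claimed in the statement. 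The only mild obstacle is justifying the asymptotic independence of $\xi_\eps$ and the stochastic integral term carefully; everything else is the continuous mapping theorem applied to the pieces already proved above.
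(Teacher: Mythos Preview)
Your proof is correct and follows exactly the approach the paper intends: the paper declares the lemma ``obvious'' from~\eqref{eq:tau-asymptotics} and the preceding convergence $N_\eps(\tau_\eps)\stackrel{\Law}{\to}N_0$, and you have simply written out the details that the author leaves implicit. Your handling of the asymptotic independence of $\xi_\eps$ and $N_\eps(\tau_\eps)$ via the approximation $N_\eps(\tau_\eps)\approx M(\infty)$ (a functional of $W$ alone) and the continuous mapping argument for the time coordinate are precisely the missing steps the paper is suppressing.
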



We shall now consider $k\ge\nu$. For these values of $k$, we denote 
\[
R^k_\eps(t)=e^{\lambda_kt}\int_0^t e^{-\lambda_ks}B^k(Y_\eps(s))dW(s).
\]
Let us take any numbers $T\in\N$ and $p>0$, and use BDG inequalities to write
\begin{align}\notag
\Pp\left\{\sup_{s\in[0,T]}|R^k_\eps(t\wedge\tau_\eps)|>\eps^{-p}\right\}&\le\sum_{n=0}^{T-1}
\Pp\left\{\sup_{t\in[n,n+1]}\left|R^k_\eps(t\wedge\tau_\eps)\right|>\eps^{-p}\right\}\\ \notag
&\le\sum_{n=0}^{T-1}\Pp\left\{\sup_{t\le n+1}\left|\int_0^{t\wedge\tau_\eps} e^{-\lambda_ks}B^k(Y_\eps(s))dW(s)\right|\ge e^{-\lambda_kn}\eps^{-p}\right\}
\\&\le  KT\eps^{-2p},
\label{eq:o(eps-2p)}
\end{align}
for some $K$.

Since the term
\[
e^{\lambda_kt}\int_0^t e^{-\lambda_ks}C^k(Y_\eps(s))ds
\]
is bounded on $t\le\tau_\eps$, we see that \eqref{eq:Duhamel_for_Y_in_coordinates},\eqref{eq:tau-asymptotics} and \eqref{eq:o(eps-2p)}
imply the following result:
\begin{lemma}\label{lm:negative_exponents_at_exit}
 For all $k\ge\nu$ and any $p>0$,
\begin{equation*}
Y_\eps^k(\tau_\eps)=\eps^{\frac{\lambda_k}{\lambda_1}(\bar\alpha-\alpha)}|\kap^1_\eps|^{-\frac{\lambda_k}{\lambda_1}}(y_0^k+\eps^\alpha\xi_\eps^k)+o_\Pp(\eps^{1-p}).
\end{equation*}
\end{lemma}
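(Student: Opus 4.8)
The plan is to mimic the derivation already carried out for the coordinates $k<\nu$ (Lemma~\ref{lm:positive_exponents_at_exit}) but now tracking the stable directions $k\ge\nu$, where the deterministic part $y_0^k$ of the initial condition does not vanish. I start from the Duhamel formula~\eqref{eq:Duhamel_for_Y_in_coordinates}, which for $k\ge\nu$ reads
\[
Y^{k}_\eps(\tau_\eps)=e^{\lambda_k\tau_\eps}\left(y_0^k+\eps^\alpha\xi_\eps^k
+\eps\int_0^{\tau_\eps}e^{-\lambda_k s}B^k(Y_\eps(s))dW(s)+\eps^2\int_0^{\tau_\eps}e^{-\lambda_ks}C^k(Y_\eps(s))ds\right).
\]
The drift ($C^k$) term is bounded on $t\le\tau_\eps$ since $\lambda_k<0$, so after multiplication by $\eps^2 e^{\lambda_k\tau_\eps}$ and using that $e^{\lambda_k\tau_\eps}$ is bounded (indeed $\tau_\eps\ge0$ and $\lambda_k<0$), that term is $o_\Pp(\eps^{1-p})$ for any $p>0$. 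The stochastic term is $\eps R^k_\eps(\tau_\eps)$, and estimate~\eqref{eq:o(eps-2p)} together with Lemma~\ref{lm:tau-to-infty} (so that $\tau_\eps$ is, with probability tending to one, at most any fixed $T$... — more precisely one first localizes on an event of the form $\{\tau_\eps\le T\}$, whose probability can be made close to $1$ uniformly after letting $T\to\infty$, or uses that $\tau_\eps$ is a.s.\ finite for each $\eps$ and applies~\eqref{eq:o(eps-2p)} with a slowly growing $T=T(\eps)$) shows $\eps R^k_\eps(\tau_\eps)=o_\Pp(\eps^{1-p})$ as well.

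Having discarded these two terms, what remains is
\[
Y^{k}_\eps(\tau_\eps)=e^{\lambda_k\tau_\eps}\left(y_0^k+\eps^\alpha\xi_\eps^k\right)+o_\Pp(\eps^{1-p}).
\]
Now I substitute the asymptotics for $\tau_\eps$ from~\eqref{eq:tau-asymptotics}, which holds with probability approaching $1$: on that event
\[
e^{\lambda_k\tau_\eps}=e^{\lambda_k\left(\frac{\bar\alpha-\alpha}{\lambda_1}\ln\eps-\frac{\ln|\kap^1_\eps|}{\lambda_1}\right)}
=\eps^{\frac{\lambda_k}{\lambda_1}(\bar\alpha-\alpha)}|\kap^1_\eps|^{-\frac{\lambda_k}{\lambda_1}},
\]
which yields exactly the claimed formula. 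The complement of the event where~\eqref{eq:tau-asymptotics} holds has probability $\to0$, so it does not affect convergence in probability; one should be slightly careful to note that on that complement the relevant quantities are still controlled (e.g.\ $\tau_\eps\le t_{V,\eps}$ keeps $Y_\eps$ bounded), so the $o_\Pp$ statement is genuinely global.

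The only real subtlety — and the step I expect to be the main obstacle — is making the bound $\eps R^k_\eps(\tau_\eps)=o_\Pp(\eps^{1-p})$ rigorous, because $\tau_\eps$ is an unbounded stopping time and the $L^2$-type maximal bound~\eqref{eq:o(eps-2p)} is only stated over a fixed finite horizon $[0,T]$. The clean way around this is: fix $p>0$; given $\delta>0$, use Lemma~\ref{lm:tau-to-infty} backwards, i.e.\ the fact that the law of $\tau_\eps-\frac{\bar\alpha-\alpha}{\lambda_1}\ln\eps$ is tight (Lemma~\ref{lm:convergence_of_kappa}), to choose $T=T_\eps$ growing like a constant times $\ln(\eps^{-1})$ such that $\Pp\{\tau_\eps> T_\eps\}<\delta$ for all small $\eps$; then apply~\eqref{eq:o(eps-2p)} with this $T_\eps$, noting $KT_\eps\eps^{-2p}\cdot\eps^{2-2p'}\to0$ provided $p'>p$ is chosen so that the polynomial gain beats the logarithmic loss — i.e.\ conclude $\eps R^k_\eps(\tau_\eps\wedge T_\eps)=o_\Pp(\eps^{1-p})$ and combine with $\Pp\{\tau_\eps>T_\eps\}<\delta$ and let $\delta\to0$. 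Since the statement allows an arbitrary $p>0$, this slack is harmless. Everything else is routine bookkeeping of the kind already done for the $k<\nu$ coordinates.
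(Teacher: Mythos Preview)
Your argument is correct and follows exactly the paper's approach: the paper derives the lemma directly from the Duhamel formula~\eqref{eq:Duhamel_for_Y_in_coordinates}, the exit-time asymptotics~\eqref{eq:tau-asymptotics}, and the maximal estimate~\eqref{eq:o(eps-2p)} for $R^k_\eps$, together with the observation that the $C^k$-integral term is bounded for $t\le\tau_\eps$. Your extra care with the localization $T=T_\eps\sim\ln(\eps^{-1})$ to control the stochastic term over the unbounded horizon is precisely the detail the paper leaves implicit.
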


Lemmas \ref{lm:positive_exponents_at_exit}, \ref{lm:convergence_of_kappa}, and \ref{lm:negative_exponents_at_exit}
provide all the necessary information on the behavior of $Y_\eps$ up to $\tau_\eps$. We now turn to the second part of the proof, the
analysis
of the evolution of $Y_\eps$ after $\tau_\eps$.

Let $\bar Y_\eps(t)=Y_\eps(\tau_\eps+t)$.
We shall study $\bar Y_\eps$ conditioned on 
$\bar Y^1(0)=Y_\eps^1(\tau_\eps)=\pm\eps^{\bar\alpha}$. We consider only the case of $+\eps^{\bar\alpha}$, 
since the analysis of the other case is entirely the same.
First, we rewrite \eqref{eq:Duhamel_for_Y_in_coordinates} for $\bar Y_\eps$:
\begin{equation}
\bar Y^{k}_\eps(t)=e^{\lambda_kt}\left(Y^k_\eps(\tau_\eps)
+\eps\int_0^{t}e^{-\lambda_k s}B^k(\bar Y_\eps(s))dW(s)+\eps^2\int_0^{t}e^{-\lambda_ks}C^k(\bar Y_\eps(s))ds\right).
\label{eq:Duhamel_for_bar_Y_in_coordinates}
\end{equation}
Let $\bar\tau_\eps=\inf\{t:\ |\bar Y_\eps^1|=R\}.$
We can rewrite~\eqref{eq:Duhamel_for_bar_Y_in_coordinates} for $k=1$, $t=\bar\tau_\eps$ as
\begin{equation}
\bar Y^{1}_\eps(\bar\tau_\eps)=e^{\lambda_1\bar\tau_\eps}\eps^{\bar\alpha}(1+\eta_\eps),
\label{eq:Duhamel_for_bar_Y_1}
\end{equation}
with
\[
\eta_\eps =\eps^{1-\bar\alpha}\int_0^{\bar\tau_\eps}e^{-\lambda_1 s}B^1(\bar Y_\eps(s))dW(s)+\eps^{2-\alpha}\int_0^{\bar\tau_\eps}e^{-\lambda_1s}C^1(\bar Y_\eps(s))ds\stackrel{\Pp}{\to} 0.
\]
The last relation is obvious if $B$ and $C$ are bounded. In the general case it follows from a localization argument.

Relation~\eqref{eq:Duhamel_for_bar_Y_1} implies
\begin{equation}
\bar\tau_\eps=-\frac{\bar\alpha}{\lambda_1}\ln\eps+\frac{1}{\lambda_1}\ln\frac{R}{1+\eta_\eps}.
\label{eq:asymptotics_for_exit_at_1} 
\end{equation}
Plugging this into~\eqref{eq:Duhamel_for_bar_Y_in_coordinates} and applying  
Lemmas \ref{lm:positive_exponents_at_exit} and \ref{lm:negative_exponents_at_exit} we can prove
the following statement:
\begin{lemma}
\label{lm:uniform_convergence_out}
\begin{equation}
\label{eq:uniform_convergence_out}
\sup_{t\le\bar\tau_\eps}|\bar Y_\eps(t)-S^t_A (\eps^{\bar\alpha}v_1)|\stackrel{\Pp}{\to} 0.
\end{equation}
\end{lemma}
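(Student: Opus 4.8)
The plan is to compare $\bar Y_\eps(t)$ with the deterministic linear flow $S^t_A(\eps^{\bar\alpha}v_1)$ coordinate by coordinate, using the Duhamel representation~\eqref{eq:Duhamel_for_bar_Y_in_coordinates} together with the asymptotics~\eqref{eq:asymptotics_for_exit_at_1} for $\bar\tau_\eps$ that we already have. Note first that $S^t_A(\eps^{\bar\alpha}v_1)=\eps^{\bar\alpha}e^{\lambda_1 t}v_1$, so the target curve lives entirely in the $v_1$-direction and grows like $\eps^{\bar\alpha}e^{\lambda_1 t}$ up to time $\bar\tau_\eps$, at which point its first coordinate is exactly $R$ (modulo the $1+\eta_\eps$ correction that $\eta_\eps\stackrel{\Pp}{\to}0$ kills). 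So the content of the lemma is: (a) the $v_1$-coordinate of $\bar Y_\eps$ stays uniformly close to $\eps^{\bar\alpha}e^{\lambda_1 t}$, and (b) all the other coordinates $\bar Y^k_\eps(t)$, $k\ge 2$, stay uniformly small on $[0,\bar\tau_\eps]$.

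For (a), subtract: from~\eqref{eq:Duhamel_for_bar_Y_in_coordinates} with $k=1$ we have $\bar Y^1_\eps(t)=e^{\lambda_1 t}\eps^{\bar\alpha}(1+\eta_\eps(t))$ where $\eta_\eps(t)=\eps^{1-\bar\alpha}\int_0^t e^{-\lambda_1 s}B^1(\bar Y_\eps(s))\,dW(s)+\eps^{2-\bar\alpha}\int_0^t e^{-\lambda_1 s}C^1(\bar Y_\eps(s))\,ds$, and I would show $\sup_{t\le\bar\tau_\eps}|\eta_\eps(t)|\stackrel{\Pp}{\to}0$ exactly as~\eqref{eq:Duhamel_for_bar_Y_1} does for the endpoint, via a BDG/maximal-inequality estimate on $[n,n+1]$-blocks (the integrand $e^{-\lambda_1 s}$ is integrable, so the supremum of the martingale part is $O_\Pp(\eps^{1-\bar\alpha})$ on each block and summing a geometric series keeps it $O_\Pp(\eps^{1-\bar\alpha})$ overall), together with a localization argument to handle unbounded $B,C$. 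Then $|\bar Y^1_\eps(t)-\eps^{\bar\alpha}e^{\lambda_1 t}|=\eps^{\bar\alpha}e^{\lambda_1 t}|\eta_\eps(t)|\le R\cdot|\eta_\eps(t)|/(1-\sup|\eta_\eps|)$ on $t\le\bar\tau_\eps$ because $\eps^{\bar\alpha}e^{\lambda_1 t}\le R/(1-\sup|\eta_\eps|)$ there; this gives the $v_1$-component estimate.

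For (b), fix $k\ge 2$. If $2\le k<\nu$, then by Lemma~\ref{lm:positive_exponents_at_exit} $Y^k_\eps(\tau_\eps)=\eps^{\frac{\lambda_k}{\lambda_1}(\bar\alpha-\alpha)+\alpha}\kap^k_\eps|\kap^1_\eps|^{-\lambda_k/\lambda_1}$, and since $\lambda_k<\lambda_1$ the exponent $\frac{\lambda_k}{\lambda_1}(\bar\alpha-\alpha)+\alpha$ is strictly positive (it equals $\alpha-\frac{\lambda_k}{\lambda_1}(\alpha-\bar\alpha)$ and $\frac{\lambda_k}{\lambda_1}<1$), so $Y^k_\eps(\tau_\eps)=o_\Pp(1)$ — in fact polynomially small. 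For $k\ge\nu$, Lemma~\ref{lm:negative_exponents_at_exit} gives $Y^k_\eps(\tau_\eps)=o_\Pp(1)$ even more easily (the exponent $\frac{\lambda_k}{\lambda_1}(\bar\alpha-\alpha)$ is positive since $\lambda_k<0<\lambda_1$ and $\bar\alpha<\alpha$). Now from~\eqref{eq:Duhamel_for_bar_Y_in_coordinates}, $\bar Y^k_\eps(t)=e^{\lambda_k t}(Y^k_\eps(\tau_\eps)+\eps M^k_\eps(t)+\eps^2 \tilde C^k_\eps(t))$ with $M^k_\eps(t)=\int_0^t e^{-\lambda_k s}B^k(\bar Y_\eps(s))\,dW(s)$. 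When $k<\nu$ we have $\lambda_k>0$, but $\bar\tau_\eps\le -\frac{\bar\alpha}{\lambda_1}\ln\eps+O_\Pp(1)$ by~\eqref{eq:asymptotics_for_exit_at_1}, so $e^{\lambda_k\bar\tau_\eps}\le \eps^{-\bar\alpha\lambda_k/\lambda_1}(1+o_\Pp(1))$; multiplying by the polynomially small $Y^k_\eps(\tau_\eps)=O_\Pp(\eps^{\alpha-\bar\alpha\lambda_k/\lambda_1})$ gives $O_\Pp(\eps^{\alpha})\to 0$, and the stochastic-integral contribution $e^{\lambda_k t}\eps M^k_\eps(t)$ is controlled by the same block-BDG estimate as in~\eqref{eq:o(eps-2p)} (it is $o_\Pp(\eps^{1-p})$ for any $p>0$, and $\eps^{1-p}e^{\lambda_k\bar\tau_\eps}=o_\Pp(\eps^{1-p-\bar\alpha\lambda_k/\lambda_1})\to 0$ for $p$ small). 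When $k\ge\nu$, $\lambda_k<0$ makes $e^{\lambda_k t}\le 1$ and everything is even smaller. Combining (a) and (b) over the finitely many coordinates yields~\eqref{eq:uniform_convergence_out}.

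The main obstacle I expect is bookkeeping the exponents to confirm that every error term beats the relevant power of $e^{\lambda_k\bar\tau_\eps}$ uniformly in $t\le\bar\tau_\eps$ — i.e.\ that the positive-eigenvalue coordinates, which are being amplified by $e^{\lambda_k t}$ up to the nearly-logarithmic time $\bar\tau_\eps$, still vanish because they start out polynomially (not merely $o_\Pp(1)$) small thanks to Lemmas~\ref{lm:positive_exponents_at_exit} and~\ref{lm:negative_exponents_at_exit}. Once the inequality $\frac{\lambda_k}{\lambda_1}(\bar\alpha-\alpha)+\alpha>\bar\alpha\lambda_k/\lambda_1$ (equivalently $\alpha>\bar\alpha\cdot\frac{2\lambda_k-\lambda_1}{\lambda_1}$, which is automatic since the right side is $<\alpha$ when $\lambda_k<\lambda_1$... more simply: the left side minus $\bar\alpha\lambda_k/\lambda_1$ equals $\alpha-\alpha\lambda_k/\lambda_1=\alpha(1-\lambda_k/\lambda_1)>0$) is checked, the supremum over $t$ is handled by the block-decomposition BDG argument already used twice in the section, and localization deals with unbounded coefficients, so these are routine.
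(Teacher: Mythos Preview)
Your proposal is correct and follows exactly the approach the paper indicates: the paper's proof of this lemma is just the one-line remark ``Plugging this [i.e.~\eqref{eq:asymptotics_for_exit_at_1}] into~\eqref{eq:Duhamel_for_bar_Y_in_coordinates} and applying Lemmas~\ref{lm:positive_exponents_at_exit} and~\ref{lm:negative_exponents_at_exit},'' and you have filled in precisely these details --- coordinatewise Duhamel, the polynomially small initial data at $\tau_\eps$ from the two cited lemmas, and the block-BDG bound~\eqref{eq:o(eps-2p)} to control the stochastic integrals uniformly in $t\le\bar\tau_\eps$. One minor arithmetic slip: in the $2\le k<\nu$ case you write $Y^k_\eps(\tau_\eps)=O_\Pp(\eps^{\alpha-\bar\alpha\lambda_k/\lambda_1})$, which is not the exponent from Lemma~\ref{lm:positive_exponents_at_exit}; but your final check of the key inequality (that the net exponent after multiplying by $e^{\lambda_k\bar\tau_\eps}$ is $\alpha(1-\lambda_k/\lambda_1)>0$) is correct, so the argument goes through.
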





However, we need a more detailed information on $\bar Y_\eps(\bar \tau_\eps)$. To that end,
we analyze $\bar Y_\eps^k(\bar \tau_\eps)$ separately for $2\le k<\nu$ and $\nu\le k\le d$.

For $2\le k<\nu$, plugging~\eqref{eq:asymptotics_for_exit_at_1} 
into~\eqref{eq:Duhamel_for_bar_Y_in_coordinates}, using Lemma~\ref{lm:positive_exponents_at_exit}
for the first term and
Lemma~\ref{lm:uniform_convergence_out} to estimate the integral terms, we see that 

\begin{align}\notag
\bar Y_\eps^k(\bar\tau_\eps)&=\eps^{-\frac{\lambda_k}{\lambda_1}\bar\alpha}
R^{\frac{\lambda_k}{\lambda_1}}(1+\eta_\eps)^{-\frac{\lambda_k}{\lambda_1}}\left(\eps^{\frac{\lambda_k}{\lambda_1}(\bar\alpha-\alpha)+\alpha}
\kap_\eps^k|\kap_\eps^1|^{-\frac{\lambda_k}{\lambda_1}}+o_\Pp(\eps^{\frac{\lambda_k}{\lambda_1}(\bar\alpha-\alpha)+\alpha})\right)\\
&=\eps^{\alpha(1-\frac{\lambda_k}{\lambda_1})}R^{\frac{\lambda_k}{\lambda_1}}
\kap_\eps^k|\kap_\eps^1|^{-\frac{\lambda_k}{\lambda_1}}(1+o_\Pp(1)),\quad 2\le k<\nu.
\label{eq:exit_for_positive_lambda}
\end{align}


For $k\ge\nu$ we denote 
\[\bar N^k_\eps=e^{\lambda_k \bar\tau_\eps}\int_0^{\bar\tau_\eps}e^{-\lambda_k s}B^k(Y_\eps(s))dW(s)\]
\begin{lemma}\label{lm:convergence_to_Gaussian_at_exit_negative_lambdas} As $\eps\to0$,
\[
(\bar N^\nu_\eps,\ldots,\bar N^{d}_\eps)\stackrel{\Law}{\to}(\bar N^\nu_0,\ldots,\bar N^{d}_0),
\]
where $(\bar N^\nu_0,\ldots,\bar N^{d}_0)$ is the centered Gaussian vector defined before the statement
of Lemma~\ref{lm:main_linear_lemma} 

\end{lemma}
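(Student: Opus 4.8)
The plan is to argue on the event $\{\bar Y^1_\eps(0)=+\eps^{\bar\alpha}\}$, which by~\eqref{eq:sign_of_Y} coincides up to a set of vanishing probability with $\{\sgn(\kap^1_\eps)=+1\}$; the case of the other sign is completely symmetric. Recalling from~\eqref{eq:Duhamel_for_bar_Y_in_coordinates} that $\eps\bar N^k_\eps$ is exactly the stochastic-integral term in the Duhamel expansion of $\bar Y^k_\eps(\bar\tau_\eps)$, I would carry out three reductions: first freeze the integrand at the deterministic curve on the unstable axis, then replace the random exit time $\bar\tau_\eps$ by its deterministic leading order, and finally identify the resulting object through a time reversal.

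First I would freeze the integrand. By Lemma~\ref{lm:uniform_convergence_out}, $\sup_{t\le\bar\tau_\eps}|\bar Y_\eps(t)-g_\eps(t)|\stackrel{\Pp}{\to}0$ with $g_\eps(t)=S^t_A(\eps^{\bar\alpha}v_1)=\eps^{\bar\alpha}e^{\lambda_1 t}v_1$, which stays on the $v_1$-axis. Using local Lipschitzness of $B$ together with BDG inequalities, localized on the high-probability event $\{\bar\tau_\eps\le C\ln\eps^{-1}\}\cap\{\sup_{t\le\bar\tau_\eps}|\bar Y_\eps(t)-g_\eps(t)|\le\delta\}$, one obtains
\[
\bar N^k_\eps=e^{\lambda_k\bar\tau_\eps}\int_0^{\bar\tau_\eps}e^{-\lambda_k s}B^k(g_\eps(s))\,dW(s)+o_\Pp(1),\qquad k\ge\nu;
\]
the point is that $\lambda_k<0$ for $k\ge\nu$, so $e^{2\lambda_k\bar\tau_\eps}\int_0^{\bar\tau_\eps}e^{-2\lambda_k s}\,ds\le(-2\lambda_k)^{-1}$ is bounded and the error inherits the $o_\Pp(1)$ from the uniform proximity (one sends $\eps\to0$ first, then $\delta\to0$). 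Next I would replace $\bar\tau_\eps$ by $s_\eps:=\lambda_1^{-1}\ln(R\eps^{-\bar\alpha})$, noting that $g_\eps(s_\eps)=Rv_1$ exactly and $\bar\tau_\eps-s_\eps=\lambda_1^{-1}\ln\frac{1}{1+\eta_\eps}=o_\Pp(1)$ by~\eqref{eq:asymptotics_for_exit_at_1}; a BDG estimate over a shrinking window around $s_\eps$ shows the resulting difference is $o_\Pp(1)$, leaving
\[
\bar N^k_\eps=\tilde M^k_\eps+o_\Pp(1),\qquad\tilde M^k_\eps:=e^{\lambda_k s_\eps}\int_0^{s_\eps}e^{-\lambda_k s}B^k(g_\eps(s))\,dW(s).
\]

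Then I would reverse time: substituting $r=s_\eps-s$ and using that for deterministic integrands reversed Wiener increments have the same joint law, one gets $(\tilde M^k_\eps)_{k\ge\nu}\stackrel{\Law}{=}\bigl(\int_0^{s_\eps}e^{\lambda_k r}B^k(S^{-r}_A(Rv_1))\,d\tilde W(r)\bigr)_{k\ge\nu}$, since $g_\eps(s_\eps-r)=Re^{-\lambda_1 r}v_1=S^{-r}_A(Rv_1)$. As $\eps\to0$, $s_\eps\to\infty$, and since $\lambda_k<0$ with $B$ bounded near the origin, this truncated integral converges in $L^2$, hence in law, to $\int_0^\infty e^{\lambda_k r}B^k(S^{-r}_A(Rv_1))\,d\tilde W(r)$, a centered Gaussian in $k\ge\nu$ with covariance $\int_0^\infty e^{(\lambda_k+\lambda_j)r}(BB^*)^{kj}(S^{-r}_A(Rv_1))\,dr=\int_{-\infty}^0 e^{-(\lambda_k+\lambda_j)s}(BB^*)^{kj}(S^s_A(Rv_1))\,ds$ after $s=-r$, which matches exactly the covariance of $(\bar N^\nu_0,\dots,\bar N^d_0)$ conditioned on $\{\sgn(\kap^1)=+1\}$. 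To recover the conditional independence from $(|\kap^1|,\kap^2,\dots,\kap^{\nu-1})$ built into the definition of that vector, I would note that by Lemma~\ref{lm:convergence_of_kappa} the limit of $(\kap^1_\eps,\dots,\kap^{\nu-1}_\eps)$ is a functional of $\xi_0$ and of the integrals $\int_0^\infty e^{-\lambda_k s}B^k(S^s_A y_0)\,dW(s)$, $k<\nu$ (where $\lambda_k>0$, so these are morally supported on a bounded window near time $0$), whereas by the previous two reductions the limit of $(\bar N^k_\eps)_{k\ge\nu}$ depends only on Wiener increments in a bounded window just before $\tau_\eps+\bar\tau_\eps$, which tends to $\infty$ by Lemma~\ref{lm:tau-to-infty} and~\eqref{eq:asymptotics_for_exit_at_1}; a standard asymptotic-independence argument (conditioning on the early-window $\sigma$-field and using that the two windows drift infinitely far apart) then gives independence of the two limits, with $\sgn(\kap^1_\eps)$ entering only through which branch $\pm Rv_1$ the curve $g_\eps$ follows after $\tau_\eps$. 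This identifies the limit with $(\bar N^\nu_0,\dots,\bar N^d_0)$ and finishes the proof.

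The hard part will be making the first two reductions rigorous simultaneously: the stochastic integral defining $\bar N^k_\eps$ has the \emph{random} upper limit $\bar\tau_\eps\sim\lambda_1^{-1}\bar\alpha\ln\eps^{-1}$, which does not transform naively under time reversal, so it has to be replaced by the deterministic $s_\eps$ first, and every BDG bound must be localized on events where $\bar\tau_\eps$ is comparable to $\ln\eps^{-1}$ and the proximity of Lemma~\ref{lm:uniform_convergence_out} holds within a fixed $\delta$, respecting the order of limits ($\eps\to0$ before $\delta\to0$). The other place requiring care is turning the heuristic ``early window versus late window'' picture into an actual proof of the stated conditional independence.
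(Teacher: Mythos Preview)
Your proposal is correct and follows essentially the same route as the paper, with two tactical differences worth noting. First, the paper performs your two reductions in the opposite order: it replaces the random upper limit $\bar\tau_\eps$ by the deterministic $\tau'_\eps=s_\eps$ \emph{before} freezing the integrand at $S^{\cdot}_A(\eps^{\bar\alpha}v_1)$; this avoids the localization issue you correctly flag as ``the hard part,'' since the freezing step then has a deterministic upper limit and BDG applies cleanly. Second, once reduced to $\hat N^k_\eps=e^{\lambda_k\tau'_\eps}\int_0^{\tau'_\eps}e^{-\lambda_k s}B^k(S^s_A\eps^{\bar\alpha}v_1)\,dW(s)$, the paper does not reverse time; it simply observes that $(\hat N^k_\eps)_{k\ge\nu}$ is already a centered Gaussian vector (deterministic integrand), computes its covariance via the change of variables $r=s-\tau'_\eps$, and checks that the covariance converges to the target. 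Your time-reversal argument reaches the same endpoint but is slightly more elaborate than needed. Finally, your discussion of the conditional independence from $(|\kap^1|,\kap^2,\dots,\kap^{\nu-1})$ goes beyond what this lemma asserts: the statement is only about the marginal law of $(\bar N^\nu_\eps,\dots,\bar N^d_\eps)$ on each branch, and the paper handles the joint structure later by invoking the strong Markov property at $\tau_\eps$.
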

\begin{proof}

Denoting
\[\tau'_\eps=-\frac{\bar\alpha}{\lambda_1}\ln\eps+\frac{1}{\lambda_1}\ln R\to\infty,\]
we obtain
\[\tau'_\eps\to\infty,
\quad\eps\to0\]
and
\begin{equation*}
\bar\tau_\eps-\tau'_\eps \stackrel{\Pp}{\to}0,\quad\eps\to0.
\end{equation*}

Therefore,
\begin{equation}
\label{eq:approximation_for_bar_N}
\bar N^k_\eps-e^{\lambda_k \tau'_\eps }\int_0^{\tau'_\eps}e^{-\lambda_k s}B^k(Y_\eps(s))dW(s)\stackrel{\Pp}{\to}0.
\end{equation}
It follows from Lemma~\ref{lm:uniform_convergence_out} that
\begin{equation}
e^{\lambda_k \tau'_\eps }\int_0^{\tau'_\eps}e^{-\lambda_k s}B^k(Y_\eps(s))dW(s)
-\hat N^k_\eps\stackrel{\Pp}{\to}0,
\label{eq:closeness_inprobability}
\end{equation}
where
\[
\hat N^k_\eps=e^{\lambda_k \tau'_\eps }\int_0^{\tau'_\eps}e^{-\lambda_k s}B^k(S^{s}_A\eps^{\bar \alpha}v_1)dW(s),\quad k\ge\nu.
\]
We see that $(\hat N^\nu_\eps,\ldots, \hat N^d_\eps)$
is a centered Gaussian vector with
\begin{align}\notag
\E \hat N^j_\eps \hat N^k_\eps& = \int_0^{\tau'_\eps}e^{(\lambda_k+\lambda_j) (\tau'_\eps-s)}(BB^*)^{kj}(S^{s}_A\eps^{\bar \alpha}v_1)ds\\
&=\int_{-\tau'_\eps}^0 e^{-(\lambda_k+\lambda_j)r}(BB^*)^{kj}(S^{\tau'_\eps+r}_A\eps^{\bar \alpha}v_1)dr \notag \\
&\to \int_{-\infty}^0 e^{-(\lambda_k+\lambda_j)r}(BB^*)^{kj}(S^{r}_ARv_1)dr,\quad\eps\to0,
\label{eq:convergence_of_covariance}
\end{align}
where the second indentity follows from the change of variables $s-\tau'_\eps=r$, and the convergence in the last
line is implied by the uniform convergence $S^{\tau'_\eps+r}_A\eps^{\bar \alpha}v_1\to S^{r}_ARv_1$, $r\le0$.

Lemma~\ref{lm:convergence_to_Gaussian_at_exit_negative_lambdas}
follows now from \eqref{eq:approximation_for_bar_N}--\eqref{eq:convergence_of_covariance}. 
\end{proof}

Now, for $k\ge\nu$, equations \eqref{eq:Duhamel_for_bar_Y_in_coordinates},\eqref{eq:asymptotics_for_exit_at_1}, and Lemma~\ref{lm:negative_exponents_at_exit} imply:
\begin{align}\notag
\bar Y_\eps^k(\tau_\eps)&=
e^{\lambda_k\left(-\frac{\bar\alpha}{\lambda_1}\ln\eps+\frac{1}{\lambda_1}\ln\frac{R}{1+\eta_\eps}\right)}
\left(\eps^{\frac{\lambda_k}{\lambda_1}(\bar\alpha-\alpha)}|\kap^1_\eps|^{-\frac{\lambda_k}{\lambda_1}}(y^k_0+\eps^\alpha\xi_\eps^k)+o_\Pp(\eps^{1-p})\right)
+\eps\bar N_\eps^k+o_\Pp(\eps)
\\ &=\eps^{-\frac{\lambda_k}{\lambda_1}\alpha} R^{\frac{\lambda_k}{\lambda_1}} |\kap^1_\eps|^{-\frac{\lambda_k}{\lambda_1}}y^k_0(1+o_\Pp(1))
+o_\Pp(\eps^{1-p-\frac{\lambda_k}{\lambda_1}\bar\alpha})+\eps\bar N_\eps^k+o_\Pp(\eps).
\label{eq:exit_for_negative_lambda}
\end{align}

\bigskip

We are now ready to finish the proof of Lemma~\ref{lm:main_linear_lemma}. We notice
that formulas analogous to~\eqref{eq:exit_for_positive_lambda} and \eqref{eq:exit_for_negative_lambda} 
hold true if we condition on $\bar Y^1_\eps(0)=Y^1_\eps(\tau_\eps)=-\eps^\alpha$. 
Since $\bar Y_\eps(\bar \tau_\eps)=Y_\eps(t_\eps)$ and $t_\eps=\tau_\eps+\bar\tau_\eps$,
Lemma~\ref{lm:main_linear_lemma} is a consequence of the strong Markov property and
an elementary analysis of relations \eqref{eq:tau-asymptotics},\eqref{eq:asymptotics_for_exit_at_1},
\eqref{eq:exit_for_positive_lambda}, \eqref{eq:exit_for_negative_lambda}.
The proof reduces to  
extracting the leading order terms in \eqref{eq:exit_for_positive_lambda}, \eqref{eq:exit_for_negative_lambda}
in each of the cases that appear in the statement of the lemma.

For example, in the case of $\nu>2$ and $-\lambda_\nu< \lambda_1-\lambda_2$,
the greatest contribution (as $\eps\to0$) in \eqref{eq:exit_for_positive_lambda}
is of order of $\eps^{\alpha(1-\frac{\lambda_2}{\lambda_1})}$ and corresponds to $k=2$.
The greatest contribution in \eqref{eq:exit_for_negative_lambda} is of order of
$\eps^{-\frac{\lambda_\nu}{\lambda_1}\alpha}$ and corresponds to $k=\nu$ since one can choose
$p$  sufficiently small and neglect the asymptotic
 contribution of the $o_\Pp(\eps^{1-p-\frac{\lambda_k}{\lambda_1}\bar\alpha})$ term. (Notice also that in this case
 the $o_\Pp(\eps)$ term can also be neglected since $\eps=o(\eps^{-\frac{\lambda_\nu}{\lambda_1}\alpha})$
 due to $\alpha\le1$ and $-\frac{\lambda_\nu}{\lambda_1}<\frac{\lambda_1-\lambda_2}{\lambda_1}<1$.)
Among these two contributions,  $\eps^{-\frac{\lambda_\nu}{\lambda_1}\alpha}$ dominates
providing the desired asymptotics
\[
\Pi_L Y_\eps(t_\eps)\sim \eps^{-\frac{\lambda_\nu}{\lambda_1}\alpha} R^{\frac{\lambda_\nu}{\lambda_1}} |\kap^1_\eps|^{-\frac{\lambda_k\nu}{\lambda_1}}y^\nu_0 v_\nu.
\]

The analysis of all the other cases is similar, and we omit it.

\bigskip

Having Lemma~\ref{lm:main_linear_lemma} at hand, it is straightforward to write down the asymptotics for the original process 
$X_\eps(t_\eps)=g(Y_\eps(t_\eps))$.

\begin{lemma}
\label{lm:main_linear_lemma_back_to_nonlinear} Let $X_\eps$ solve the system~\eqref{eq:sde} with initial
condition~\eqref{eq:inidata_before_splitting} and assume $(x,\alpha,\mu)\in \In_i$.  
Let us define $\kap^1,\kap^2$ via~\eqref{eq:xi_0},\eqref{eq:Gaussian_along_the_stable},\eqref{eq:kappa}, and 
$\beta,\xi',\zeta$ as in Lemma~\ref{lm:main_linear_lemma}.

Then
\begin{equation}
X_\eps(t_\eps)=q_\eps+\eps^\beta\phi'_\eps,
\end{equation}
where
\[
 \Pp\{q_\eps=g(\pm R)\}=1,\quad \eps>0,
\]
and
\begin{equation*}
\left(q_\eps,\phi'_\eps,\ t_\eps-\frac{\alpha}{\lambda_1}\ln\frac{1}{\eps}\right)\stackrel{\Law}{\to}(q, \phi', \zeta),\quad\eps\to0.
\end{equation*}
Here
\[q=g(\sgn(\kap^1)R),\]
and
\[\phi'=(Df(q))^{-1}\xi'.\]
\end{lemma}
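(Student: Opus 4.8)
The plan is to transport the conclusions of Lemma~\ref{lm:main_linear_lemma} for the conjugated process $Y_\eps = f(X_\eps)$ back to $X_\eps = g(Y_\eps)$ via a first-order Taylor expansion of $g$ around the exit point. First I would recall that on the event $\{Y_\eps^1(t_\eps) = \pm\eps^{\bar\alpha}\}$ (which, by~\eqref{eq:Y1(tau_eps)} and the discussion leading to~\eqref{eq:sign_of_Y}, has probability tending to $1$) Lemma~\ref{lm:main_linear_lemma} gives $Y_\eps^1(t_\eps) = \sgn(\kap^1)R + o_\Pp(1)$ and $\Pi_L Y_\eps(t_\eps) = \eps^\beta(\xi' + o_\Pp(1))$, so that $Y_\eps(t_\eps) = \sgn(\kap^1)Rv_1 + \eps^\beta\xi' + o_\Pp(\eps^\beta)$, with the sign stabilizing in distribution jointly with the renormalized exit time $t_\eps - \tfrac{\alpha}{\lambda_1}\ln\tfrac1\eps \to \zeta$. (Here I use that $Y_\eps^1(t_\eps)$ takes exactly the value $\pm R$ at the stopping time $t_\eps$ when $t_\eps < t_{V,\eps}$, which happens with probability $\to 1$; strictly, $Y_\eps^1(t_\eps) = \pm R$ on this event with no error term, and only the sign is random — this slightly sharpens the bookkeeping.)

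Next I would set $q = g(\sgn(\kap^1)Rv_1)$, a random point taking one of the two values $g(\pm Rv_1) = g(\pm R)$, and define $q_\eps$ to be $g(\pm R)$ on $A_{i,\pm,\eps}$, so that $\Pp\{q_\eps = g(\pm R)\} = 1$ for every $\eps$ (note $q_\eps$ carries no $\eps$-dependence beyond which of the two values it is). Then I would write, conditionally on the sign, the Taylor expansion
\[
X_\eps(t_\eps) = g\bigl(\pm Rv_1 + \eps^\beta\xi' + o_\Pp(\eps^\beta)\bigr)
= g(\pm Rv_1) + Dg(\pm Rv_1)\bigl(\eps^\beta\xi' + o_\Pp(\eps^\beta)\bigr) + O_\Pp(\eps^{2\beta}),
\]
using that $g \in C^2$ on the relevant neighborhood. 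Since $2\beta > \beta$, this gives $X_\eps(t_\eps) = q_\eps + \eps^\beta\bigl(Dg(q)\xi' + o_\Pp(1)\bigr)$, and defining $\phi'_\eps = \eps^{-\beta}(X_\eps(t_\eps) - q_\eps)$ and $\phi' = Dg(q)\xi' = (Df(q))^{-1}\xi'$ (by the chain rule, $Dg(g^{-1}(y)) = (Df(y))^{-1}$... more precisely $Dg(w) = (Df(g(w)))^{-1}$, evaluated at $w = \pm Rv_1$, giving $Df(q)^{-1}$), the joint convergence $(q_\eps, \phi'_\eps, t_\eps - \tfrac{\alpha}{\lambda_1}\ln\tfrac1\eps) \Rightarrow (q, \phi', \zeta)$ follows from the joint convergence in Lemma~\ref{lm:main_linear_lemma} together with the continuous mapping theorem applied to the map $(s, \xi', \zeta) \mapsto (g(sRv_1), Dg(sRv_1)\xi', \zeta)$ for $s = \pm 1$.

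The one genuinely nontrivial point — rather than the routine Taylor estimate — is handling the conditioning correctly: the vector $\xi'$ and the sign $\sgn(\kap^1)$ are not independent (cf.\ the construction of $\bar N_0$ before Lemma~\ref{lm:main_linear_lemma}, which is conditionally Gaussian given the sign, and the appearance of $\kap^1$ itself inside $\eta_\pm$), so the convergence statement must be read as joint convergence of the triple, and the application of the continuous mapping theorem must respect this joint law. I would therefore phrase the argument as: condition on $A_{i,\pm,\eps}$, on which the sign is deterministic, apply the expansion above with the conditional law $\Law(\cdot \mid A_{i,\pm,\eps})$, invoke the conditional convergence from Lemma~\ref{lm:main_linear_lemma}~(and its analogue for the $-$ sign mentioned there), and then reassemble the unconditional law using $\Pp(A_{i,\pm,\eps}) \to p_{i,\pm}$. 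The remaining verifications — that the $o_\Pp(\eps^\beta)$ remainder in $Y_\eps(t_\eps)$ survives division by $\eps^\beta$ as $o_\Pp(1)$, and that $g$ is $C^2$ hence the quadratic remainder is genuinely $O_\Pp(\eps^{2\beta}) = o_\Pp(\eps^\beta)$ — are immediate from the hypotheses on $f_i, g_i$ in the setting of Section~\ref{sec:setting}.
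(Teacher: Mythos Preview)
Your proposal is correct and is exactly the argument the paper has in mind: the paper gives no proof beyond the remark ``Having Lemma~\ref{lm:main_linear_lemma} at hand, it is straightforward to write down the asymptotics for the original process $X_\eps(t_\eps)=g(Y_\eps(t_\eps))$,'' and your Taylor expansion of $g$ around $\pm Rv_1$ combined with the continuous mapping theorem is precisely that straightforward derivation. Your care with the conditioning on the sign and with the joint law of $(\sgn(\kap^1),\xi',\zeta)$ is appropriate and goes beyond what the paper spells out.
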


\begin{remark}\rm The case of $\alpha=1$ and $\phi_\eps=0$ is also covered by this lemma.
This situation corresponds to the deterministic initial condition for all $\eps>0$.
\end{remark}

\section{Asymptotics along the heteroclinic orbit}\label{sec:along_heteroclinic}

In this section we consider the equation~\eqref{eq:sde}, equipped with
the initial condition
\[
X_\eps(0)=x_0+\eps^\alpha\phi_\eps,
\] 
 on a finite time horizon (up to a
nonrandom time $T$). Here $\alpha\in(0,1]$, and~$(\phi_\eps)_{\eps>0}$ is a family of random
vectors satisfying
\[
\phi_\eps\stackrel{\Law}{\to}\phi_0,\quad \eps\to0,
\]
for some nondegenerate random vector $\phi_0$, independent of the Wiener process driving the  equation. 

The following statement is elementary.
\begin{lemma}\label{lm:X_to_S_in_probability} As $\eps\to0$,
\begin{equation*}
\sup_{t\in[0,T]}|X_\eps(t)-S^tx_0|\stackrel{\Pp}{\to}0.
\end{equation*}
\end{lemma}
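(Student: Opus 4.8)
The plan is to compare $X_\eps$ with the deterministic flow $S^tx_0$ via a Gronwall-type argument, exploiting that the only difference between the two trajectories on the fixed finite interval $[0,T]$ comes from the small random perturbation of the initial condition and the small stochastic forcing, both of which vanish as $\eps\to0$. First I would write the integrated forms
\[
X_\eps(t)=x_0+\eps^\alpha\phi_\eps+\int_0^t b(X_\eps(s))\,ds+\eps\int_0^t\sigma(X_\eps(s))\,dW(s),
\qquad
S^tx_0=x_0+\int_0^t b(S^sx_0)\,ds,
\]
and subtract to obtain, with $D_\eps(t)=X_\eps(t)-S^tx_0$,
\[
|D_\eps(t)|\le \eps^\alpha|\phi_\eps|+\int_0^t|b(X_\eps(s))-b(S^sx_0)|\,ds+\eps\Bigl|\int_0^t\sigma(X_\eps(s))\,dW(s)\Bigr|.
\]

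Next I would localize. Fix $r>0$; since $S^tx_0$ stays in a compact set $K$ for $t\in[0,T]$, enlarge it slightly to $K_r$ (the closed $r$-neighborhood of that compact orbit segment) and let $\theta_\eps=\inf\{t:X_\eps(t)\notin K_r\}\wedge T$. On $[0,\theta_\eps]$ the $C^2$ (hence locally Lipschitz) vector field $b$ satisfies $|b(X_\eps(s))-b(S^sx_0)|\le L_r|D_\eps(s)|$ for a constant $L_r$ depending only on $K_r$. Set $\Delta_\eps=\sup_{t\le\theta_\eps}\Bigl|\eps\int_0^t\sigma(X_\eps(s))\,dW(s)\Bigr|$; by the Burkholder--Davis--Gundy inequality (or simply Doob's $L^2$ inequality) together with boundedness of $\sigma$ on $K_r$, $\E\Delta_\eps^2\le C_r\eps^2 T$, so $\Delta_\eps\stackrel{\Pp}{\to}0$. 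Also $\eps^\alpha|\phi_\eps|\stackrel{\Pp}{\to}0$ since $\phi_\eps\Rightarrow\phi_0$ and $\alpha>0$. Applying Gronwall's inequality on $[0,\theta_\eps]$ gives
\[
\sup_{t\le\theta_\eps}|D_\eps(t)|\le\bigl(\eps^\alpha|\phi_\eps|+\Delta_\eps\bigr)e^{L_rT}=:\delta_\eps,
\qquad \delta_\eps\stackrel{\Pp}{\to}0.
\]

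Finally I would close the localization: on the event $\{\delta_\eps<r\}$ the trajectory $X_\eps$ never leaves $K_r$ before $T$, so $\theta_\eps=T$ and $\sup_{t\le T}|D_\eps(t)|\le\delta_\eps<r$; hence $\Pp\{\sup_{t\le T}|X_\eps(t)-S^tx_0|\ge r\}\le\Pp\{\delta_\eps\ge r\}\to0$. Since $r>0$ was arbitrary, this is exactly the claimed convergence in probability. The main (though mild) obstacle is the localization bookkeeping needed because $b$ and $\sigma$ are only assumed $C^2$, not globally Lipschitz or bounded; everything else is a routine Gronwall/BDG estimate. (One could alternatively invoke the standard SDE-versus-ODE stability estimate of \cite{Blagoveschenskii:MR0139204} cited later in the paper, which yields the same conclusion directly.)
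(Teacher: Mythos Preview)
Your argument is correct: the localization--Gronwall--BDG scheme you outline is precisely the standard elementary proof, and all the steps (tightness of $\phi_\eps$, local Lipschitzness of $b$, the BDG bound on the stopped stochastic integral, and the closing of the localization via $\{\delta_\eps<r\}\subset\{\theta_\eps=T\}$) go through as written. The paper itself gives no proof at all---it simply declares the statement ``elementary''---so your write-up is in fact more detailed than what the paper provides, and your closing remark that one could alternatively invoke~\cite{Blagoveschenskii:MR0139204} matches the paper's own use of that reference for the refined estimate immediately following this lemma.
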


We are going to give more precise asymptotics in the spirit of~\cite{Blagoveschenskii:MR0139204}. To that end we denote
\[
Y(t)=X(t)-S^tx_0,
\]
and write
\begin{align*}
Y_\eps(t)&=\eps^\alpha\phi_\eps+\int_0^t(b(X_\eps(s)-b(S^tx_0)))ds+\eps\int_0^t\sigma(X_\eps(s))dW(s)\\
&=\eps^\alpha\phi_\eps+\int_0^t Db(S^tx_0)Y_\eps(s)ds+\int_0^t Q(S^tx_0,Y_\eps(s))ds +\eps\int_0^t\sigma(X_\eps(s))dW(s),
\end{align*}
where
\begin{equation}
\label{eq:quadratic_correction}
|Q(x,y)|\le Cy^2,
\end{equation}
for some constant $C$ and all $y$ with $|y|<1$.
Treating this as a linear nonhomogeneous equation we apply the Duhamel principle to see that
\begin{equation}
\label{eq:error_of_linearization}
Y_\eps(t)=\eps^\alpha \Phi_{x_0}(t)\phi_\eps+\eps \Phi_{x_0}(t)\int_0^t \Phi^{-1}_{x_0}(s)\sigma(X_\eps(s))dW(s)+\Phi_{x_0}(t)\int_0^t\Phi^{-1}_{x_0}(s)Q(S^tx_0, Y_\eps(s))ds.
\end{equation}
We introduce a stopping time $\tau_\eps$ by
\[
\tau_\eps=\inf\{t:\ |Y_\eps(t)|\ge\eps^{\frac{2}{3}(1\wedge\alpha)}\}.
\]
On the event $\{\tau_\eps\le T\}$ we use \eqref{eq:error_of_linearization} to write
\begin{equation}
Y_\eps(t\wedge\tau_\eps)=I_1(t,\eps)+ I_2(t,\eps)+I_3(t,\eps),
\label{eq:decomposition_of_Y}
\end{equation}
with
\[
|I_1(t,\eps)|=|\eps^\alpha \Phi_{x_0}(t\wedge \tau_\eps)\phi_\eps|\le K_1\eps^\alpha|\phi_\eps|  
\]
for some constant $K_1$,
\[
|I_2(t,\eps)|=\eps  \left| \Phi_{x_0}(t\wedge \tau_\eps)\int_0^{t\wedge\tau_\eps}\Phi^{-1}_{x_0}(s)\sigma(X_\eps(s))dW(s)\right|=\eps |N_\eps(t)|,
\]
where $N_\eps$ is a tight (in $C$ with $\sup$-norm) family of continuous processes due to Lemma~\ref{lm:X_to_S_in_probability},
and, finally,
\[
|I_3(t,\eps)|\le K_3\eps^{\frac{4}{3}(1\wedge\alpha)}
\]
for some constant $K_3$ due to \eqref{eq:quadratic_correction}.
It now follows from~\eqref{eq:decomposition_of_Y} for $t=T$, that on $\{\tau_\eps\le T\}$
\[
\eps^{\frac{2}{3}(1\wedge\alpha)}\le K_1\eps^\alpha|\phi_\eps| + \eps |N_\eps(\tau_\eps)| + K_3\eps^{\frac{4}{3}(1\wedge\alpha)},
\]
which automatically implies 
\[
\Pp\{\tau_\eps\le T\}\to 0,\quad\eps\to0.
\]
On the complement, $\{\tau_\eps> T\}$, we have
\begin{equation}
Y_\eps(T)= I'_1(\eps)+ I'_2(\eps)+I'_3(\eps),
\label{eq:decomposition_of_Y_at_T}
\end{equation}
where
\begin{equation}
\label{eq:I_1_prime}
I'_1(\eps)=\eps^\alpha \Phi_{x_0}(T)\phi_\eps,
\end{equation}
\begin{equation}
\label{eq:I_2_prime}
I'_2(\eps)=\eps\Phi_{x_0}(T)\int_0^{T}\Phi^{-1}_{x_0}(s)\sigma(X_\eps(s))dW(s),
\end{equation}
\begin{equation}
\label{eq:I_3_prime}
|I'_3(\eps)|\le K_4 \eps^{\frac{4}{3}(1\wedge\alpha)}
\end{equation}

Now \eqref{eq:decomposition_of_Y_at_T} --- \eqref{eq:I_3_prime} imply the following result:
\begin{lemma}\label{lm:linearization_on_finite_interval} For every $\eps>0$
\[
X_\eps(T)=S^Tx_0+\eps^\alpha\bar\phi_\eps,
\]
where
\[
\bar \phi_\eps\stackrel{\Law}{\to}\bar\phi_0,\quad\eps\to0,
\]
with
\[
\bar\phi_0 = \Phi_{x_0}(T)\phi_0+\ONE_{\alpha=1}N,
\]
$N$ being a Gaussian vector:
\[
N=\Phi_{x_0}(T)\int_0^{T}\Phi^{-1}_{x_0}(s)\sigma(S^sx_0)dW(s).
\]
\end{lemma}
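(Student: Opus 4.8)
\medskip
\noindent\textbf{Proof plan.} The plan is to read the statement off the decomposition~\eqref{eq:decomposition_of_Y_at_T}--\eqref{eq:I_3_prime} established just above. First I would set $\bar\phi_\eps:=\eps^{-\alpha}\bigl(X_\eps(T)-S^Tx_0\bigr)=\eps^{-\alpha}Y_\eps(T)$, so that the displayed identity $X_\eps(T)=S^Tx_0+\eps^\alpha\bar\phi_\eps$ holds by definition for every $\eps>0$ and only the convergence $\bar\phi_\eps\stackrel{\Law}{\to}\bar\phi_0$ remains. Since $\Pp\{\tau_\eps\le T\}\to0$ is already known, it suffices to identify the weak limit of $\bar\phi_\eps\ONE_{\tau_\eps>T}$ and then remove the conditioning by the usual estimate $|\E g(\bar\phi_\eps)-\E[g(\bar\phi_\eps)\ONE_{\tau_\eps>T}]|\le\|g\|_\infty\,\Pp\{\tau_\eps\le T\}$ for bounded continuous $g$; on $\{\tau_\eps>T\}$ one has $Y_\eps(T)=I'_1(\eps)+I'_2(\eps)+I'_3(\eps)$.

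Next I would divide by $\eps^\alpha$ and treat the three terms. The first gives $\eps^{-\alpha}I'_1(\eps)=\Phi_{x_0}(T)\phi_\eps$, which converges in law to $\Phi_{x_0}(T)\phi_0$ by hypothesis. The third satisfies $|\eps^{-\alpha}I'_3(\eps)|\le K_4\eps^{\frac{4}{3}(1\wedge\alpha)-\alpha}=K_4\eps^{\alpha/3}$ because $\alpha\le1$, hence $\eps^{-\alpha}I'_3(\eps)\stackrel{\Pp}{\to}0$. The middle one is $\eps^{-\alpha}I'_2(\eps)=\eps^{1-\alpha}\,\Phi_{x_0}(T)\int_0^T\Phi_{x_0}^{-1}(s)\sigma(X_\eps(s))\,dW(s)$; when $\alpha<1$ the bracketed stochastic integral is a tight family (by Lemma~\ref{lm:X_to_S_in_probability}, continuity of $\sigma$, and BDG with a localization, exactly as for $N_\eps$ above) while the scalar $\eps^{1-\alpha}\to0$, so this term is $o_\Pp(1)$; when $\alpha=1$ the scalar equals $1$ and I would instead show $\Phi_{x_0}(T)\int_0^T\Phi_{x_0}^{-1}(s)\sigma(X_\eps(s))\,dW(s)\stackrel{\Pp}{\to}N$ by subtracting the target, rewriting the difference as $\Phi_{x_0}(T)\int_0^T\Phi_{x_0}^{-1}(s)\bigl(\sigma(X_\eps(s))-\sigma(S^sx_0)\bigr)\,dW(s)$, and driving it to $0$ in probability via BDG plus a localization, using $\sup_{s\le T}|X_\eps(s)-S^sx_0|\stackrel{\Pp}{\to}0$ and the uniform continuity of $\sigma$ on a compact neighborhood of the orbit.

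Putting the pieces together, on $\{\tau_\eps>T\}$ one gets $\bar\phi_\eps=\Phi_{x_0}(T)\phi_\eps+\ONE_{\alpha=1}N+o_\Pp(1)$, where $N=\Phi_{x_0}(T)\int_0^T\Phi_{x_0}^{-1}(s)\sigma(S^sx_0)\,dW(s)$ is the advertised fixed Gaussian vector. Since $N$ is a stochastic integral with deterministic coefficients against $W$, and $\phi_\eps$ is independent of the driving Wiener process, $\phi_\eps\perp N$ for each $\eps$, so together with $\phi_\eps\stackrel{\Law}{\to}\phi_0$ (with $\phi_0$ independent of $W$) we get the joint convergence $(\phi_\eps,N)\stackrel{\Law}{\to}(\phi_0,N)$ with independent coordinates; Slutsky's theorem then yields $\bar\phi_\eps\ONE_{\tau_\eps>T}\stackrel{\Law}{\to}\Phi_{x_0}(T)\phi_0+\ONE_{\alpha=1}N=\bar\phi_0$, and removing the conditioning finishes the argument. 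I expect the only real obstacle to be the $\alpha=1$ case of the middle term: justifying the convergence of the It\^o integrals with the random, $\eps$-dependent integrand $\Phi_{x_0}^{-1}(s)\sigma(X_\eps(s))$ to the one with the deterministic integrand $\Phi_{x_0}^{-1}(s)\sigma(S^sx_0)$, together with the observation that in the limit this Gaussian decouples from $\phi_0$ --- which becomes automatic once the difference of the two It\^o integrals is shown to vanish in probability, since that is an assertion about the $W$-driven part alone.
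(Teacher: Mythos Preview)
Your proposal is correct and follows essentially the same approach as the paper, which simply asserts that the decomposition~\eqref{eq:decomposition_of_Y_at_T}--\eqref{eq:I_3_prime} implies the lemma without further comment. You have spelled out the details the paper leaves implicit---the case split on $\alpha$, the Slutsky/joint-convergence step, and the independence of $\phi_\eps$ from the Wiener functional $N$---all of which are routine and match the intended argument.
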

\begin{remark}\rm The lemma also holds true for the case where $\alpha=1$ and $\phi_\eps=0$.
This situation corresponds to the deterministic initial condition for all $\eps>0$.
\end{remark}

\bigskip 
We can now finish the proof of Lemma~\ref{lm:iteration_lemma}.
\begin{proof}[Proof of Lemma~\ref{lm:iteration_lemma}] Parts \ref{it:convergence_of_times} and \ref{it:convergence_of_splitting_probabilities} follow directly from 
Lemma~\ref{lm:main_linear_lemma_back_to_nonlinear}. Part~\ref{it:distribution_in_the_tangent_space}
follows from consecutive application of 
Lemmas~\ref{lm:main_linear_lemma_back_to_nonlinear} and~\ref{lm:linearization_on_finite_interval} together
with Strong Markov property and the nondegeneracy assumption~\eqref{eq:nondegeneracy_in_other_coordinates}.
The last two statements of the Lemma are obvious in view of the analysis above, and we omit their proofs.
\end{proof}
\begin{remark}\rm
Notice that not only we are able to prove that the entrance-exit map with desired properties exists,
but we also can describe this map explicitly, tracing the details from Lemmas~\ref{lm:main_linear_lemma}
and~\ref{lm:linearization_on_finite_interval}. For example,
\[
p_{i,\pm}(x,\alpha,\mu)=\Pp\{\sgn(\kap^1)=\pm1\},
\]
where $\kap^1$ is the random variable constructed in Lemma~\ref{lm:main_linear_lemma} applied to
the linearization about $z_i$.
\end{remark}

\section{Discussion}\label{sec:discussion}

In this section we informally comment on implications of our main result and its possible extensions.

\subsection{The structure of the limiting process.}\label{sec:limiting_process}

According to our main result, the limiting probability for the solution of the system to evolve along
a sequence of heteroclinic connections is  computed by a recursive procedure
described in \eqref{eq:associated_sequence}--\eqref{eq:product_of_conditional_probabilities}.

The key result for the analysis of the Markov property for the limiting process is Lemma~\ref{lm:main_linear_lemma}, which gives the ``exit distribution''
$\Law(\xi')$ via the ``entrance distribution'' $\Law(\xi_0)$.

Analyzing~\eqref{eq:exit_distribution}, we see that in the
case where for each $i\in\Cc$ we have
$\nu_i=2$, and $-\lambda_{i,2}>\lambda_{i,1}$ (i.e. contraction is stronger than expansion)
our  iteration scheme gives that at each saddle point the limiting process chooses each of the
outgoing heteroclinic connections with equal probabilities, thus defining a simple random walk on the
network viewed as a directed graph. 

However, in general, the  limiting distribution
on sequences of saddles does not necessarily define a Markov chain. 
In fact, if the exit distribution defined in~\eqref{eq:exit_distribution} involves $\eta_-$, then
it is asymmetric. Therefore, at the next saddle point,
the choice between the two heteroclinic orbits is asymmetric as well, and probabilities are generically not equal 
to~$1/2$. In fact they may equal $0$ or $1$.

Let us now look at $\eta_+$, the other random variable involved in the exit distribution. If the entrance distribution
is symmetric, then $\eta_+$ also has symmetric distribution. On the other hand, if the entrance distribution is asymmetric,
then $\eta_+$ is also (typically) asymmetric.

Notice that there are plenty of situations where the entrance distribution is strongly asymmetric
(concentrated on a semiline) and the probabilities to choose one of the two outgoing connections are~$0$
and~$1$, i.e. the choice is asymptotically deterministic (and dependent on the history of the process).

So, there are many possibilities, but roughly the limiting random walk on the saddles
may be described as follows. The system starts evolving in a Markov fashion (choosing the next saddle out of two possible ones
with probabilities $1/2$ independently of the history of the process)
until it meets a saddle point at which the exit distribution becomes asymmetric. After that, 
the choice of one of the two heteroclinic connections  is not Markov any more (being defined by the entrance distribution
whose asymmetry is in turn defined by the history of the process.)
Then, at each saddle point
the following three things may happen: (i) new asymmetry is brought in due to the presence of $\eta_-$ in the
exit asymptotics; (ii) the asymmetry present in the entrance distribution is transferred to the exit distribution
by $\eta_+$; (iii) $\nu=2$ and the contraction is strong enough to ensure that the exit distribution does not involve
$\eta_-$ or $\eta_+$ thus being symmetric. In the first two cases the system remembers its past encoded in the asymmetry
of $\eta_\pm$. However, in the last case, the system loses all the memory and goes back to the ``Markov mode'' (which is just a convenient name for this phase of the system's evolution; of course the system is not truly Markov since some information from the past
is encoded in the fact that the system is in the ``Markov mode'' presently.) 

In particular, if every entrance distribution involved is either symmetric or strongly asymmetric
(this excludes the ``rare'' cases of $[\nu=2;-\alpha\lambda_\nu=\lambda_1]$ and $[\nu>2; -\lambda_\nu= \lambda_1-\lambda_2]$), then
the two splitting probabilities are either both equal to $1/2$, or equal $0$ and $1$ respectively.

\begin{figure}
    \epsfig{file=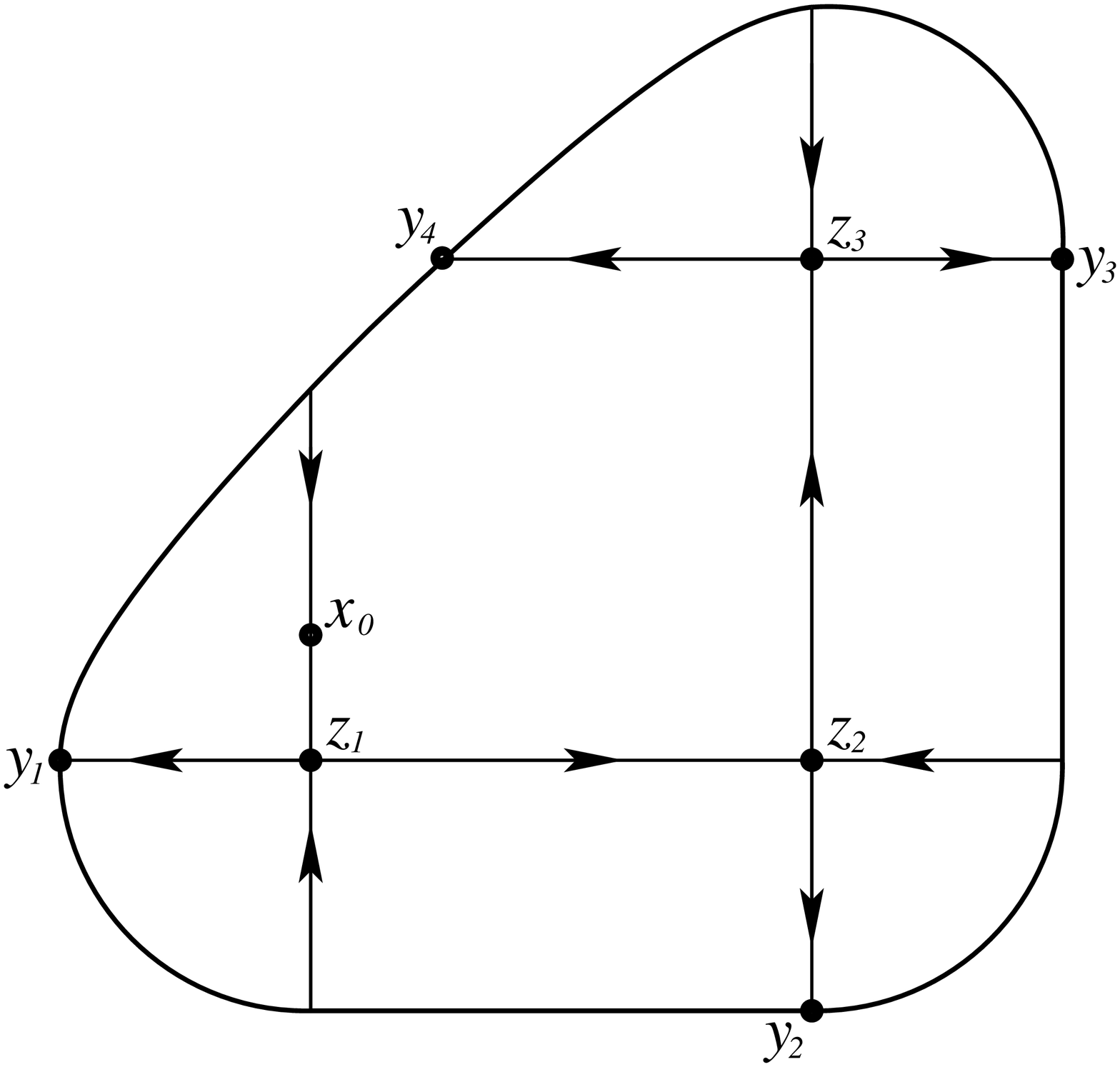,height=8cm}
\caption{} 
 \label{fig:cellular} 
\end{figure}

It is instructional to find the limiting exit measure for the 2-dimensional example shown on Figure~\ref{fig:cellular},
where the domain contains three saddle points $z_1,z_2$, and $z_3$. 
Let us assume that the linearizations at these points have the same eigenvalues $\lambda_1>0>\lambda_2$. 
According to Theorem~\ref{th:exit_asymptotics}, the exit measure will weakly converge, as $\eps\to 0$, to
\[p_1 \delta_{y_1}+p_2 \delta_{y_2}+p_3 \delta_{y_3}+p_4 \delta_{y_4},
\]
where $y_1,\ldots,y_4$ are the points on the boundary that are reachable along the network.

If $-\lambda_2>\lambda_1$ then
we have Markov evolution along the network resulting in
$p_1=1/2$, $p_2=1/4$, $p_3=p_4=1/8$. 

If $-\lambda_2<\lambda_1$, then the exit distribution at each saddle point is strongly asymmetric which
results in deterministic motion after the first bifurcation so that $p_1=1/2$, $p_2=0$, $p_3=0$, $p_4=1/2$.

If $-\lambda_2=\lambda_1$, then the exit distribution $N+\eta_-$ at each saddle point is asymmetric, so that the evolution is not Markov,
and $p_1=1/2$, $0<p_2<1/4$, $0<p_3<p_4$.

\subsection{Dependence of the exit measure on the diffusion coefficient.} Suppose that the vector field $b$
is given. From our basic iteration procedure we see that if for some matrix-valued function $\sigma$ at each saddle point the entrance distribution
is either symmetric or strongly asymmetric, then for any other choice of diffusion coefficients, the situation is the same, so that the
limiting process given by Theorem~\ref{th:main} and the exit measure asymptotics given by Theorem~\ref{th:exit_asymptotics} do not depend
of $\sigma$. Therefore, these theorems describe an intrinsic property of the vector field $b$.

\subsection{Our set of assumptions and possible generalizations.}
\label{sbsec:assumption_discussion}
We begin with an obvious and important extension of our theorem that we are going to mention without any proof.
We assumed so far that every critical point in the network is a saddle with nontrivial stable and unstable manifolds.
Let us see what happens if we allow situations where some of the critical points of the network are actually attracting nodes (this corresponds to $\nu_i=1$,
where $\nu_i$ is the order number of the first negative eigenvalue of the linearization at $z_i$). It follows from the FW theory that if the initial
condition belongs to a neighborhood of such a point, then with probability approaching 1 as $\eps\to0$, the trajectory will not leave that
neighborhood within the time of order $\ln(\eps^{-1})$. Therefore, our main result holds true for this situation as well, with
the following correction: with probability 1, for every $i\in\Cc$ with $\nu_i=1$, if the limiting process reaches~$z_i$, then it stays there indefinitely.

Another essential assumption that we have used is that near each saddle point the dynamics is $C^2$-smoothly 
conjugated to a linear flow.
Although this assumption leaves aside a number of interesting systems (one example is
2-dimensional Hamiltonian
dynamics), it is not too much
restrictive. Namely,
a smooth conjugation to linear dynamics is possible for saddle points satisfying so called no resonance conditions, see
a discussion of local conjugation to a linear flow in \cite[Section 6.6]{Katok--Hasselblatt}). In most examples of
heteroclinic networks in the survey~\cite{Krupa:MR1437986} all the saddles satisfy the no
resonance conditions for almost all parameters involved in the description of the system.

Although our proof relies entirely on the analysis of the conjugated linear systems, we believe that our results can be
extended to the situation with local dynamics that has no smooth linearization. The most promising way to approach the
understanding of that situation is to use normal forms (see \cite[Section 6.6]{Katok--Hasselblatt}).

Our assumption that all the eigenvalues of matrices $A_i$ are real and simple can be weakened. We
could have actually assumed that $\lambda_1,\lambda_2,\lambda_\nu\in\R$ are simple, and
\[
\lambda_1>\lambda_2>\Re\lambda_3>\ldots>0>\lambda_{\nu}>\ldots>\Re\lambda_{\nu+1}>\ldots.
\]

Obviously, the case where the eigenvalue with greatest real part has a nonzero imaginary part leads to a completely different picture since the invariant manifold
associated to $\lambda_1$ is two-dimensional in that case. The case where $\lambda_2$ or $\lambda_{\nu}$ are not real has a lot in common
with the case considered in this paper, since for small values of $\eps$ the system tends to evolve along heteroclinic connections. However,
instead of a unique limit distribution for~$Z_\eps$, there may be a set of limit distributions, i.e., the attractor in the space of
distributions may be non-trivial.
The reason is that due to the rotation associated with the imaginary part, the exit distribution at a saddle point is not well-defined. An analogue of Lemma~\ref{lm:main_linear_lemma}  holds true, but the normalization by $\eps^{-\beta}$ should be replaced by a more sophisticated one
involving a rotation by an angle proportional to $\ln(\eps^{-1})$.
Even more complicated situations arise if, instead of connecting hyperbolic fixed points, the heteroclinic orbits connect 
limit cycles or other invariant sets.

The last assumption we would like to discuss is the assumption that the set $L$ is conservative. Notice that
Theorem~\ref{th:main} does not necessarily describe the asymptotic behavior of the diffusion up to infinite
time. Indeed, times spent at subsequently visited saddles can become smaller and smaller so that the total time
accumulates to a finite value. There is no unique
deterministic time horizon that serves all limiting trajectories at once, since these values may differ from sequence
to sequence. If one chooses $L$ to be conservative, then one can find a valid trajectory-dependent (therefore, random) time
horizon serving all sequences from $L$.

\section{Auxiliary lemmas}
\label{sec:proof_tau_to_infty}

\begin{proof}[Proof of Lemma~\ref{lm:tau-to-infty}.]
We prove that $\tau_\eps^k\stackrel{\Pp}{\to}\infty$ for
every $k$. Due to \eqref{eq:Duhamel_for_Y_in_coordinates}, for any $T>0$,
\begin{equation}\label{eq:decomp-A1-A2-A3}
\Pp\{\tau_\eps^k\le T\}\le \Pp(A_1(\eps,T))+\Pp(A_2(\eps,T))+\Pp(A_3(\eps,T)),
\end{equation}
where
\begin{align*}
A_1(\eps,T)& =\left\{\eps^\alpha (1\vee e^{\lambda_k T})|\xi_\eps^k|>
\frac{\eps^{\bar\alpha}}{4}\right\},\\
A_2(\eps,T)& =\left\{\tau_\eps\le T;\ \exists t<\tau_\eps,\ : \eps e^{\lambda_k t}
\left|\int_0^t e^{-\lambda_k s}B^k(Y_\eps(s))dW(s)\right|>
\frac{\eps^{\bar\alpha}}{4}\right\},\\
A_3(\eps,T)& =\left\{\tau_\eps\le T;\ \exists t<\tau_\eps: \eps^2 e^{\lambda_k t}
\left|\int_0^t e^{-\lambda_k s}C^k(Y_\eps(s))ds\right|>
\frac{\eps^{\bar\alpha}}{4}\right\}.
\end{align*}

Now
\[
\Pp(A_1(\eps,T))\le\Pp\left\{|\xi_\eps^k|>\frac{\eps^{\bar\alpha-\alpha}}{4(1\vee e^{\lambda_kT})}\right\}\to 0,\quad\eps\to0,
\]
since $\eps^{\bar\alpha-\alpha}\to\infty$.

To estimate $A_2(\eps,T)$, we can write 
\begin{equation*}
\Pp(A_2(\eps,T))\le\Pp\left\{\sup_{t\le \tau_\eps\wedge T}|N^{k}_\eps(t)|>
\frac{\eps^{\bar\alpha-1}}{4(1\vee e^{\lambda_kT})}\right\}
=O(\eps^{2(1-\bar\alpha)})\to0,
\end{equation*}
which follows from consecutive application of the Chebyshev inequality, BDG inequalities, and an elementary estimate on the quadratic variation
of $N^{k}_\eps$ which was defined in~\eqref{eq:Nk}.

Next, we notice that for sufficiently small $\eps$,
\[
\Pp(A_3(\eps,T))\le\Pp\left\{(1\vee e^{\lambda_kT})T C^*>\frac{\eps^{\bar\alpha-2}}{4}\right\}=0,
\]
where $C^*=\sup|C(y)|$, which completes the proof.
\end{proof}

\begin{proof}[Sketch of a proof of Lemma~\ref{lm:tau=tau_1}.]  The proof is similar to the proof of Lemma~\ref{lm:tau-to-infty} 
and uses the fact that among the factors $e^{\lambda_k t}$, the one with $k=1$ 
grows fastest of all in~$t$, and $t$ is large due to  Lemma~\ref{lm:tau-to-infty},
so that the exit level $\eps^{\bar\alpha}$ is first reached by $|Y^1_\eps(t)-(S^ty_0)^1|$ with high
probability.
\end{proof}

\section{Proof of properties of the space of curves $\X$}\label{sec:basics-on-paths}
Any nondecreasing continuous map of $[0,1]$ onto itself is called a time change.
For a time change $\lambda$ we define its inverse by
\[
\lambda^{-1}(s)=\inf\{t:\ \lambda(s)\ge s\}.
\]
The inverse function is nondecreasing and satisfies 
\[\lambda\circ\lambda^{-1}(s)=s,\quad s\in[0,1].\] 
It is continuous on the left and has limits on the right, but
it may have jumps associated
with segments of  constancy, and therefore it is not necessarily a time change. Nevertheless,
 the following lemma holds true.
\begin{lemma}
\label{lm:nested_segments}
 Let $\lambda_1$ and $\lambda_2$ be two time changes. Suppose that each segment
of constancy of $\lambda_1$ is contained in a segment of constancy of $\lambda_2$. Then
$\lambda_2\circ\lambda_1^{-1}$ is a time change.
\end{lemma}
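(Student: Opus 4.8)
The plan is to show directly that $\mu:=\lambda_2\circ\lambda_1^{-1}$ is continuous, nondecreasing, and maps $[0,1]$ onto $[0,1]$; these three properties are exactly the definition of a time change. Monotonicity and surjectivity are essentially immediate: $\lambda_1^{-1}$ is nondecreasing (as the left-continuous inverse of the nondecreasing $\lambda_1$) and $\lambda_2$ is nondecreasing, so their composition is nondecreasing; and since $\lambda_1\circ\lambda_1^{-1}(s)=s$ for all $s$ while $\lambda_2$ is onto, the range of $\mu$ contains $\lambda_2(\lambda_1^{-1}([0,1]))$, which I will check is all of $[0,1]$ using that $\lambda_1^{-1}(0)=0$, $\lambda_1^{-1}(1)=1$, $\lambda_2(0)=0$, $\lambda_2(1)=1$, together with monotonicity. (Equivalently, $\mu$ is a nondecreasing function with $\mu(0)=0$, $\mu(1)=1$, so surjectivity reduces to continuity.)

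The real content, and the step I expect to be the main obstacle, is continuity of $\mu$. The only way a nondecreasing function can fail to be continuous is by having a jump, so I must rule out jumps of $\mu$. A jump of $\lambda_2\circ\lambda_1^{-1}$ at a point $s$ can arise only from a jump of $\lambda_1^{-1}$ at $s$ (since $\lambda_2$ itself is continuous): $\lambda_1^{-1}$ is left-continuous and its jumps correspond precisely to the maximal segments of constancy of $\lambda_1$ — if $\lambda_1$ is constant equal to $s$ on a maximal interval $[a,b]$ with $a<b$, then $\lambda_1^{-1}$ jumps from (something $\le a$, actually $=a$ by left-continuity after shrinking) to $b$ at $s$. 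So the potential jump of $\mu$ at $s$ is $\lambda_2(b)-\lambda_2(a)$, where $[a,b]$ is a segment of constancy of $\lambda_1$. Here is where the hypothesis enters: by assumption this segment $[a,b]$ of constancy of $\lambda_1$ is contained in a segment of constancy of $\lambda_2$, hence $\lambda_2(a)=\lambda_2(b)$, so the jump is zero. I should be slightly careful about the precise value of $\lambda_1^{-1}$ at the left endpoint of the plateau (whether it is $a$ or the right endpoint of the preceding plateau), but in all cases the relevant interval over which I evaluate the increment of $\lambda_2$ lies inside a single constancy segment of $\lambda_1$, hence inside one of $\lambda_2$, so $\lambda_2$ does not increase across it.

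Concretely, I would argue as follows. Fix $s_0\in[0,1]$ and set $t_- = \lambda_1^{-1}(s_0) = \inf\{t:\lambda_1(t)\ge s_0\}$ and $t_+ = \sup\{t:\lambda_1(t)\le s_0\}$; by continuity of $\lambda_1$ we have $\lambda_1\equiv s_0$ on $[t_-,t_+]$, and $[t_-,t_+]$ is (contained in) a segment of constancy of $\lambda_1$ whenever $t_-<t_+$. For $s\downarrow s_0$, $\lambda_1^{-1}(s)\downarrow t_+$ (not merely $t_-$), while $\lambda_1^{-1}(s_0)=t_-$; thus $\lim_{s\downarrow s_0}\mu(s) - \mu(s_0) = \lambda_2(t_+) - \lambda_2(t_-)$. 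Since $[t_-,t_+]$ is contained in a constancy segment of $\lambda_2$ by hypothesis (trivially if $t_-=t_+$), this difference is $0$, so $\mu$ is right-continuous at $s_0$; left-continuity of $\mu$ follows from left-continuity of $\lambda_1^{-1}$ and continuity of $\lambda_2$. Hence $\mu$ is continuous, and combined with $\mu$ nondecreasing, $\mu(0)=0$, $\mu(1)=1$, it is a time change. The only thing to double-check in writing this up is the identity $\lim_{s\downarrow s_0}\lambda_1^{-1}(s)=\sup\{t:\lambda_1(t)\le s_0\}$ and the endpoint bookkeeping, which is a short exercise with the definition of $\inf$.
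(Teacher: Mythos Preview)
Your argument is correct and follows essentially the same route as the paper: identify the level set $[t_-,t_+]=\{t:\lambda_1(t)=s_0\}$, observe that the one-sided limits of $\lambda_1^{-1}$ at $s_0$ are $t_-$ and $t_+$, and use the hypothesis to conclude $\lambda_2(t_-)=\lambda_2(t_+)$, hence continuity. One small correction: your claim $\lambda_1^{-1}(1)=1$ is not true in general (if $\lambda_1$ has a plateau at height $1$ on $[a,1]$ then $\lambda_1^{-1}(1)=a$), but $\mu(1)=\lambda_2(a)=\lambda_2(1)=1$ still follows from the hypothesis, exactly as the paper checks separately at the endpoint.
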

\begin{proof} The function  $\lambda_2\circ\lambda_1^{-1}$ is obviously nondecreasing with
$\lambda_2\circ\lambda_1^{-1}(0)=0$. Let us prove that it is continuous.
Take any point $s\in[0,1]$ and let 
\[[t_-,t_+]=\{t:\lambda_1(t)=s\}.\]
Since $\lambda_1^{-1}(s+)=t_+$ and $\lambda_1^{-1}(s-)=t_-$, we have
\[
\lambda_2\circ\lambda_1^{-1}(s+)=\lambda_2(t_+)
\]
and
\[
\lambda_2\circ\lambda_1^{-1}(s-)=\lambda_2(t_-).
\]
By the assumption of the lemma, $\lambda_2(t_+)=\lambda_2(t_-)$. Therefore,
\[
\lambda_2\circ\lambda_1^{-1}(s+)=\lambda_2\circ\lambda_1^{-1}(s-),
\]
and the continuity is proven.

If $s=1$, then, using the same notation and reasoning we see that $t_+=1$, and
\[
\lambda_2\circ\lambda_1^{-1}(1-)=\lambda_2(t_-)=\lambda_2(1)=1,
\]
and the proof is complete.
\end{proof}

\begin{lemma}\label{lm:factor_of_no_constancy} For any nonconstant path $\gamma$ there is a path $\gamma'$ with no intervals of
constancy, and a time change $\lambda$ so that $\gamma=\gamma'\circ\lambda$. 
\end{lemma}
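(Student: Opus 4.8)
The plan is to pass to the quotient of $[0,1]$ by the relation that identifies two parameter values exactly when $\gamma$ is constant between them. If $\gamma$ already has no interval of constancy there is nothing to do: take $\lambda=\mathrm{id}_{[0,1]}$ and $\gamma'=\gamma$. Otherwise, for $s,t\in[0,1]$ declare $s\approx t$ to mean that $\gamma$ is constant on $[s\wedge t,\,s\vee t]$. Using continuity of $\gamma$ one checks that $\approx$ is an equivalence relation whose classes are closed subintervals of $[0,1]$; the nondegenerate classes are precisely the maximal intervals of constancy of $\gamma$, and there are at most countably many of them (each contains a rational lying in no other class).

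Next I would form the quotient $Q=[0,1]/\!\approx$ with the total order inherited from $[0,1]$, and let $q:[0,1]\to Q$ be the quotient map, which is monotone, surjective, and continuous for the order topology on $Q$. The key point is that $Q$ is order-isomorphic to $[0,1]$: $Q$ is compact and connected (a continuous image of $[0,1]$), separable (the image $q(\mathbb{Q}\cap[0,1])$ is dense), has least element $q(0)$ and greatest element $q(1)$ with $q(0)\neq q(1)$ since $\gamma$ is nonconstant, and it has no jumps — if $q(s)<q(t)$ then $q([s,t])$ is a connected, order-convex subset of $Q$ containing both endpoints, so it cannot be the disconnected two-point set $\{q(s),q(t)\}$, hence some element lies strictly between them. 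By Cantor's characterization of $[0,1]$ (equivalently, by the standard fact that collapsing a countable family of pairwise disjoint closed subintervals of $[0,1]$ to points produces an arc), there is an order isomorphism, hence a homeomorphism, $\Theta:Q\to[0,1]$. Set $\lambda=\Theta\circ q$. Then $\lambda$ is a time change, and for $s<t$ one has $\lambda(s)=\lambda(t)$ iff $q(s)=q(t)$ iff $\gamma$ is constant on $[s,t]$; that is, the intervals of constancy of $\lambda$ coincide with those of $\gamma$.

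Then I would define $\gamma'$ and verify the claims. For $u\in[0,1]$ the set $\lambda^{-1}(u)$ is a nonempty closed subinterval on which $\gamma$ is constant by the equivalence above; put $\gamma'(u)$ equal to that constant value. This is well defined, and $\gamma=\gamma'\circ\lambda$ by construction. Since $\lambda$ is a continuous surjection between compact spaces, hence a closed quotient map, and $\gamma$ is continuous and constant on the fibres of $\lambda$, the induced map $\gamma'$ is continuous (alternatively one argues by a direct compactness/subsequence argument). The zeroth coordinate $(\gamma')^0$ is nondecreasing, because for $u<u'$ any chosen preimages satisfy $t<t'$ and $\gamma^0$ is nondecreasing, so $\gamma'$ is a path. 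Finally, $\gamma'$ has no interval of constancy: if $\gamma'$ were constant on $[u,v]$ with $u<v$, put $s=\max\lambda^{-1}(u)$ and $t=\min\lambda^{-1}(v)$; then $s<t$ (otherwise $u=v$), and for every $r\in[s,t]$ we have $\lambda(r)\in[u,v]$, so $\gamma(r)=\gamma'(\lambda(r))$ equals the common value, i.e.\ $\gamma$ is constant on $[s,t]$; hence $s\approx t$, so $\lambda(s)=\lambda(t)$, contradicting $\lambda(s)=u<v=\lambda(t)$.

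The only genuinely substantive step is the identification $Q\cong[0,1]$ — equivalently, the production of a time change whose set of intervals of constancy coincides with that of $\gamma$; everything else (that $\approx$ is an equivalence with interval classes, the continuity of $\gamma'$, the monotonicity of $(\gamma')^0$, and the ``no constancy'' verification) is routine. If one prefers to avoid quoting Cantor's theorem, the same $\lambda$ can be built concretely as $\lambda(t)=\mu([0,t])$ for a suitable non-atomic Borel probability measure $\mu$ on $[0,1]$ that assigns zero mass to each maximal interval of constancy $[a_n,b_n]$ of $\gamma$ and positive mass to every subinterval contained in none of them; constructing such a $\mu$ — a Cantor-type measure carried by the (essentially perfect) set $[0,1]\setminus\bigcup_n(a_n,b_n)$ — then becomes the main point, and is the step I would expect to cost the most care.
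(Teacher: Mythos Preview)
Your proof is correct and follows the same underlying idea as the paper: construct a time change $\lambda$ whose maximal intervals of constancy coincide with those of $\gamma$, then pass $\gamma$ through the resulting quotient to obtain $\gamma'$. The only difference is packaging: the paper simply asserts the existence of such a $\lambda$ ``analogously to the Cantor staircase'' and sets $\gamma'=\gamma\circ\lambda^{-1}$, whereas you obtain $\lambda$ by identifying the quotient $Q=[0,1]/\!\approx$ with $[0,1]$ via Cantor's characterization. Your alternative at the end (take $\lambda(t)=\mu([0,t])$ for a non-atomic measure vanishing on the constancy intervals) is precisely the paper's construction, and your verification that $\gamma'$ has no interval of constancy mirrors the paper's contradiction argument almost word for word.
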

\begin{proof} Let $I_1,I_2,\ldots$ be maximal nondegenerate segments of constancy of~$\gamma$. By assumption, none of these segments coincides with $[0,1]$.
Therefore, there is a time change $\lambda$
that has these and only these maximal nondegenerate segments of constancy ($\lambda$ may be constructed analogously to the Cantor staircase). 
It is easy to see that $\gamma'=\gamma\circ\lambda^{-1}$ is continuous.

Let us assume that there is a nodegenerate segment $[s_-,s_+]$ and a point $x$, such that $\gamma'(s)=x$ for all $s\in[s_-,s_+]$.
This means that $\gamma(t)=x$ for all $t\in\lambda^{-1}([s_-,s_+])$. Now notice that $\lambda^{-1}([s_-,s_+])$ consists of the left ends
of all maximal (not necessarily nondegenerate) segments of constancy of $\lambda$ (and thus of $\gamma$) contained
in $[t_-,t_+]=[\lambda^{-1}(s_-),\lambda^{-1}(s_+)]$. So, for the left end $t$ of any interval of constancy contained in $[t_-,t_+]$, 
we have $\gamma(t)=x$. Therefore, in fact, $\gamma(t)=x$ for all $t\in[t_-,t_+]$. We conclude that $[t_-,t_+]$ is contained in some segment
of constancy of $\lambda$, so that
\[
s_-=\lambda(t_-)=\lambda(t_+)=s_+,
\]
which contradicts the nondegeneracy of $[s_-,s_+]$. This finishes the proof that $\gamma'=\gamma\circ\lambda^{-1}$ is a 
curve with no intervals of constancy. To see that $\gamma=\gamma'\circ\lambda$, we notice that
for each $s$, 
\[
\lambda^{-1}\circ\lambda(s)=\inf\{s_0:\lambda(s_0)=\lambda(s)\}
\]
is the left-end of the segment of constancy of $\lambda$ (and $\gamma$), containing $s$. Therefore,
\[
\gamma'\circ\lambda(s)=\gamma\circ\lambda^{-1}\circ\lambda(s)=\gamma(s), \quad s\in[0,1],
\]
and the proof is complete.
\end{proof}

\begin{proof}[Proof of Lemma \ref{lm:equivalence}] Of all equivalence properties, only the transitivity is not quite obvious. So, we assume that
$\gamma_1\sim\gamma_2$ and $\gamma_2\sim\gamma_3$ and prove that $\gamma_1\sim\gamma_3$. In other words, our 
assumption is that there are paths $\gamma_{12}$, $\gamma_{23}$ and time changes $\lambda_{12,1}$, $\lambda_{12,2}$
$\lambda_{23,2}$, $\lambda_{23,3}$ such that
\begin{align*}
\gamma_1&=\gamma_{12}\circ\lambda_{12,1},&\quad \gamma_2&=\gamma_{12}\circ\lambda_{12,2},\\
\gamma_2&=\gamma_{23}\circ\lambda_{23,2},&\quad \gamma_3&=\gamma_{23}\circ\lambda_{23,3}.
\end{align*}

According to Lemma~\ref{lm:factor_of_no_constancy}, there is a path $\gamma'$ with no intervals of constancy, and a time change $\lambda$
such that
\[
\gamma_2=\gamma'\circ\lambda=\gamma_{12}\circ\lambda_{12,2}=\gamma_{23}\circ\lambda_{23,2}.
\]
This representation implies that segments of constancy of time changes $\lambda_{12,2}$ and $\lambda_{23,2}$
are contained in segments of constancy of $\lambda$, so that Lemma~\ref{lm:nested_segments} implies that
$\lambda\circ\lambda_{12,2}^{-1}$ and $\lambda\circ\lambda_{23,2}^{-1}$ are time changes. We also see that
\begin{align*}
\gamma_{12}&=\gamma'\circ\lambda\circ\lambda_{12,2}^{-1},\\
\gamma_{23}&=\gamma'\circ\lambda\circ\lambda_{23,2}^{-1},
\end{align*}
so that
\begin{align*}
\gamma_{1}&=\gamma'\circ\lambda\circ\lambda_{12,2}^{-1}\circ\lambda_{12,1},\\
\gamma_{3}&=\gamma'\circ\lambda\circ\lambda_{23,2}^{-1}\circ\lambda_{23,2},
\end{align*}
which completes the proof, since the paths $\gamma_1,\gamma_3$ are represented via a common underlying path $\gamma'$ and time changes
$\lambda\circ\lambda_{12,2}^{-1}\circ\lambda_{12,1}$ and $\lambda\circ\lambda_{23,2}^{-1}\circ\lambda_{23,2}$,
respectively.
\end{proof}
\bigskip


For a curve $\Gamma\in\X$, we shall denote by $\Gamma'$ the set of all $\gamma\in\Gamma$ with no segments
of constancy. Lemma~\ref{lm:factor_of_no_constancy} shows that $\Gamma'\ne\emptyset$ for any curve
represented by a nonconstant path.

For a continuous function $f:[0,1]\to \R^{d+1}$, we use 
\[
|f|_\infty=\sup_{s\in[0,1]}|f(s)|.
\] 

\begin{lemma}\label{lm:basics-on-paths}
\begin{enumerate}
\item\label{it:existence-of-reparametrization}  For every $\gamma_1\in\Gamma$ and $\gamma_2\in\Gamma'$, there is a time change
$\lambda$ such that $\gamma_1=\gamma_2\circ \lambda$. If $\gamma_1\in\Gamma'$,
then $\lambda$ is a bijection.
\item\label{it:approx-by-bijections} For every $\gamma\in\Gamma$ and every $\eps>0$, there is a $\gamma_\eps\in\Gamma'$ 
such that \[|\gamma-\gamma_\eps|_\infty<\eps.\]
\item\label{it:path_is_a_closed_set} Every curve is a closed set in the sup-norm $|\cdot|_\infty$.
\item\label{it:equivalent_def} The definition \eqref{eq:def-distance} is equivalent to
\begin{equation}
\label{eq:def-distance2}
d(\Gamma_1,\Gamma_2)=\inf_{\gamma_1\in\Gamma'_1, \gamma_2\in\Gamma'_2} |\gamma_1-\gamma_2|_\infty.
\end{equation}
\end{enumerate}
\end{lemma}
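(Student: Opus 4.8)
The plan is to establish the four items in the order listed, using the structural Lemmas~\ref{lm:nested_segments} and~\ref{lm:factor_of_no_constancy} together with Helly's selection principle for the closedness in item~\eqref{it:path_is_a_closed_set}.

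\emph{Item~\eqref{it:existence-of-reparametrization}.} Take $\gamma_1\in\Gamma$ and $\gamma_2\in\Gamma'$. By transitivity of $\sim$ (Lemma~\ref{lm:equivalence}) there are a path $\hat\gamma$ and nondecreasing surjections $\mu_1,\mu_2$ of $[0,1]$ onto itself with $\gamma_1=\hat\gamma\circ\mu_1$ and $\gamma_2=\hat\gamma\circ\mu_2$; such $\mu_i$ cannot have jumps (a jump would make the image skip an interval), so they are continuous, i.e.\ time changes. If $\mu_2$ were constant on a nondegenerate segment, then so would be $\gamma_2=\hat\gamma\circ\mu_2$, which is impossible; hence $\mu_2$ is strictly increasing, a homeomorphism of $[0,1]$, and $\mu_2^{-1}$ is again a time change. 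Setting $\lambda=\mu_2^{-1}\circ\mu_1$, a composition of time changes and hence a time change, gives $\gamma_1=\gamma_2\circ\lambda$. If moreover $\gamma_1\in\Gamma'$, then $\lambda$ can have no nondegenerate interval of constancy (otherwise $\gamma_1=\gamma_2\circ\lambda$ would be constant there), so $\lambda$ is a strictly increasing continuous surjection of $[0,1]$, i.e.\ a bijection.

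\emph{Item~\eqref{it:approx-by-bijections}.} Fix $\gamma^*\in\Gamma'$; by item~\eqref{it:existence-of-reparametrization} there is a time change $\lambda$ with $\gamma=\gamma^*\circ\lambda$. For $\delta\in(0,1)$ put $\lambda_\delta(s)=(1-\delta)\lambda(s)+\delta s$. Each $\lambda_\delta$ is a strictly increasing continuous bijection of $[0,1]$, so $\gamma_\delta:=\gamma^*\circ\lambda_\delta$ is a reparametrization of $\gamma^*$ with no interval of constancy, hence $\gamma_\delta\in\Gamma'$. Since $|\lambda_\delta(s)-\lambda(s)|=\delta|s-\lambda(s)|\le\delta$ for all $s$ and $\gamma^*$ is uniformly continuous, $|\gamma-\gamma_\delta|_\infty=\sup_{s}|\gamma^*(\lambda(s))-\gamma^*(\lambda_\delta(s))|\to0$ as $\delta\to0$; taking $\delta$ small enough produces the required $\gamma_\eps$.

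\emph{Item~\eqref{it:path_is_a_closed_set}.} The class of a constant path is a singleton and hence closed, so assume $\Gamma$ nonconstant, fix $\gamma^*\in\Gamma'$, and let $\gamma_n\in\Gamma$ with $|\gamma_n-\gamma|_\infty\to0$. By item~\eqref{it:existence-of-reparametrization} write $\gamma_n=\gamma^*\circ\lambda_n$ with $\lambda_n$ time changes. These need not be equicontinuous, so by Helly's selection principle I pass to a subsequence along which $\lambda_n\to\lambda_\infty$ pointwise at every continuity point of some nondecreasing $\lambda_\infty:[0,1]\to[0,1]$ with $\lambda_\infty(0)=0$, $\lambda_\infty(1)=1$. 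The key step is that $\lambda_\infty$ is continuous. Suppose not, and let $s_0$ be a jump with $a:=\lambda_\infty(s_0-)<b:=\lambda_\infty(s_0+)$. Fix $c\in(a,b)$ and set $p:=\gamma^*(c)$. Given $\delta>0$, choose continuity points $u<s_0<v$ of $\lambda_\infty$ with $v-u<\delta$, $\lambda_\infty(u)<c<\lambda_\infty(v)$; then for $n$ large along the subsequence $\lambda_n(u)<c<\lambda_n(v)$, so by the intermediate value theorem there is $s\in(u,v)$ with $\lambda_n(s)=c$, whence $\gamma_n(s)=\gamma^*(c)=p$ and
\[
|\gamma(s_0)-p|\le|\gamma(s_0)-\gamma(s)|+|\gamma(s)-\gamma_n(s)|\le\omega_\gamma(\delta)+|\gamma-\gamma_n|_\infty,
\]
where $\omega_\gamma$ is the modulus of continuity of $\gamma$. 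Letting $n\to\infty$ and then $\delta\to0$ gives $\gamma(s_0)=\gamma^*(c)$ for every $c\in(a,b)$, so $\gamma^*$ is constant on $[a,b]$, contradicting $\gamma^*\in\Gamma'$. Hence $\lambda_\infty$ is a time change, and passing to the limit in $\gamma_n(s)=\gamma^*(\lambda_n(s))$ (valid on a dense set, and extending to all of $[0,1]$ by continuity of both sides) shows $\gamma=\gamma^*\circ\lambda_\infty$, so $\gamma\sim\gamma^*$ and $\gamma\in\Gamma$.

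\emph{Item~\eqref{it:equivalent_def}.} Since $\Gamma_i'\subseteq\Gamma_i$, trivially $\rho(\Gamma_1,\Gamma_2)\le d(\Gamma_1,\Gamma_2)$. For the reverse inequality fix $\eps>0$, pick $\gamma_i\in\Gamma_i$ with $\sup_s|\gamma_1(s)-\gamma_2(s)|<\rho(\Gamma_1,\Gamma_2)+\eps$, and, by item~\eqref{it:approx-by-bijections}, pick $\tilde\gamma_i\in\Gamma_i'$ with $|\gamma_i-\tilde\gamma_i|_\infty<\eps$; then $d(\Gamma_1,\Gamma_2)\le|\tilde\gamma_1-\tilde\gamma_2|_\infty\le\rho(\Gamma_1,\Gamma_2)+3\eps$, and $\eps\downarrow0$ completes the proof.

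The main obstacle is item~\eqref{it:path_is_a_closed_set}: the time changes $\lambda_n$ realizing $\gamma_n$ over the fixed skeleton $\gamma^*$ are not equicontinuous, so only a pointwise (Helly) limit $\lambda_\infty$ is available, and the crux is ruling out jumps of $\lambda_\infty$ — exactly where the defining property of $\Gamma'$, the absence of nondegenerate intervals of constancy, enters, through the observation that a jump of $\lambda_\infty$ would force $\gamma^*$ to be constant on a nondegenerate interval.
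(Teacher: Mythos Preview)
Your proof is correct and follows essentially the same approach as the paper for all four items: inverting the strictly increasing time change attached to the $\Gamma'$ representative for item~\ref{it:existence-of-reparametrization}, the convex perturbation $\lambda_\delta=(1-\delta)\lambda+\delta\,\mathrm{id}$ for item~\ref{it:approx-by-bijections}, Helly's selection theorem for item~\ref{it:path_is_a_closed_set}, and item~\ref{it:approx-by-bijections} for item~\ref{it:equivalent_def}. The only noteworthy difference is in item~\ref{it:path_is_a_closed_set}: after Helly, you rule out a jump of $\lambda_\infty$ by showing it would force $\gamma^*$ to be constant on the jump interval (contradicting $\gamma^*\in\Gamma'$), whereas the paper instead argues that such a jump would directly contradict the uniform convergence $\gamma_n\to\mu$; your route is a bit shorter, and your explicit handling of the constant-path case is a detail the paper leaves implicit.
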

\begin{proof}
Part~\ref{it:existence-of-reparametrization}. If $\gamma_1\in\Gamma$, $\gamma_2\in\Gamma'$, then there are time changes
 $\lambda_1,\lambda_2$ and a path $\gamma'$ 
with $\gamma_1=\gamma'\circ \lambda_1$ and
$\gamma_2=\gamma'\circ\lambda_2$. 
The map $\lambda_2$ is strictly increasing and $\gamma'\in\Gamma'$, since if any of these
two conditions is violated then $\gamma_2$ has a segment of constancy. In particular, $\lambda_2^{-1}$ is also a time change, and
we can write $\gamma'=\gamma_2\circ \lambda_2^{-1}$.
So, $\gamma_1=\gamma_2\circ\lambda_2^{-1}\circ\lambda_1$, and we can set
$\lambda=\lambda_2^{-1}\circ\lambda_1$. If $\lambda$ is not a bijection, then $\gamma_1$ has an
interval of constancy, and the proof of part~\ref{it:existence-of-reparametrization} is complete.

To prove part~\ref{it:approx-by-bijections}, we use part~\ref{it:existence-of-reparametrization} to find
$\gamma'\in\Gamma'$ and a time change $\lambda$ so that $\gamma=\gamma'\circ\lambda$. For any $\delta>0$,
$\lambda_\delta(s)=\delta s +(1-\delta)\lambda(s)$ defines a time change. Notice
that $|\lambda_\delta-\lambda|_\infty\le 2\delta$ and $\gamma' \circ \lambda_\delta\in\Gamma'$.
Due to the uniform continuity of $\gamma$,
\[|\gamma-\gamma' \circ \lambda_\delta|_\infty= |\gamma'\circ \lambda-\gamma' \circ \lambda_\delta|_\infty<\eps,\]
 for
sufficiently small $\delta$, and we are done.  

Part~\ref{it:path_is_a_closed_set}. We have to prove that if 
$\gamma_n\in\Gamma$ for all $n\in\N$, and $\lim_{n\to\infty}|\gamma_n-\mu|_\infty=0$, then $\mu\in\Gamma$.
Due to part~\ref{it:approx-by-bijections}, it is sufficient to consider the case where $\gamma_n\in\Gamma'$ for all $n$. In this situation
there exists
a sequence of time changes $\lambda_n$ and $\gamma\in\Gamma'$ such that $\gamma_n=\gamma\circ\lambda_n$
for all $n$.
Due to Helley's Selection Theorem, see \cite[Appendix II]{Billingsley:MR1700749}, there is a sequence $n'\to\infty$ and  
a nondecreasing function 
$\lambda_\infty:[0,1]\to[0,1]$ such that $\lim_{n'\to\infty}\lambda_{n'}(s)=\lambda_\infty(s)$ for every
point of continuity $s$ of $\lambda_\infty$. 
This implies 
\begin{equation}
\label{eq:limit_representation_of_mu}
\mu(s)=\lim_{n'\to\infty}\gamma_{n'}(s)=\lim_{n'\to\infty}\gamma\circ\lambda_{n'}(s)=\gamma\circ\lambda_\infty(s)
\end{equation}
for all points of continuity $s$.

Our goal is to show that $\lambda_\infty$ is actually
continuous at every point in $[0,1]$. That will imply that $\lambda_{n'}$ converges to $\lambda_\infty$ uniformly, since all these functions
are nondecreasing, and we shall be able to conclude that $\lambda_\infty(0)=0$, $\lambda_\infty(1)=1$, so that
$\lambda_\infty$ is a time change. Moreover, it follows that~\eqref{eq:limit_representation_of_mu} holds for all $s\in[0,1]$, so that
$\mu=\gamma\circ\lambda_\infty$ and $\mu\in\Gamma$.

So, we assume that there is a point $s_0\in(0,1)$ with $\lambda_\infty(s_0-)<\lambda_\infty(s_0+)$ and we are
going to show that this assumption contradicts the uniform convergence of $\gamma_{n'}$ to $\mu$. 

Take any point $r\in(\lambda_\infty(s_0-),\lambda_\infty(s_0+))$ with 
$\gamma(r)\ne\gamma\circ\lambda_\infty(s_0+)$ and $\gamma(r)\ne\gamma\circ\lambda_\infty(s_0-)$ (this can be done since
$\gamma$ has no segments of constancy),
and define
\begin{equation}
\label{eq:def-of-eps-for-non-uniform}
\eps=\frac{1}{2}\min\{|\gamma(r)-\gamma\circ\lambda_\infty(s_0+)|,|\gamma(r)-\gamma\circ\lambda_\infty(s_0-)|\}.
\end{equation} 

The continuity of $\gamma$ and the existence of the right and left limits of $\lambda_\infty$ at $s_0$ allow
us to choose $\delta>0$ so that 
\begin{align}
s\in(s_0,s_0+\delta)\quad \text{implies\quad } 
|\gamma\circ\lambda_\infty(s)-\gamma\circ\lambda_\infty(s_0+)|<\eps,
\label{eq:close-to-the-jump1}\\
s\in(s_0-\delta,s_0)\quad \text{implies\quad }
|\gamma\circ\lambda_\infty(s)-\gamma\circ\lambda_\infty(s_0-)|<\eps.
\label{eq:close-to-the-jump2}
\end{align}

Since $\lambda_{n'}$ converges to $\lambda_\infty$ at all its continuity points, and these are dense, we see that 
there is a number $n_0$ such that for every $n'>n_0$, there is a continuity point $s(n')\in(s_0-\delta,s_0+\delta)$ with $\lambda_{n'}(s(n'))=r$.
For these $n'$, if $s=s(n')\in(s_0,s_0+\delta)$, \eqref{eq:def-of-eps-for-non-uniform} and \eqref{eq:close-to-the-jump1} imply
\begin{align*}
|\gamma_{n'}(s)-\gamma\circ\lambda_\infty(s)|&=|\gamma(r)-\gamma\circ\lambda_\infty(s)|\\
&\ge |\gamma(r)-\gamma\circ\lambda_\infty(s_0+)|-|\gamma\circ\lambda_\infty(s)-\gamma\circ\lambda_\infty(s_0+)|\\
&\ge 2\eps-\eps=\eps,
\end{align*}
and if $s=s(n')\in(s_0-\delta,s_0)$, \eqref{eq:def-of-eps-for-non-uniform} and \eqref{eq:close-to-the-jump2} imply
\begin{align*}
|\gamma_{n'}(s)-\gamma\circ\lambda_\infty(s)|&=|\gamma(r)-\gamma\circ\lambda_\infty(s)|\\
&\ge |\gamma(r)-\gamma\circ\lambda_\infty(s_0-)|-|\gamma\circ\lambda_\infty(s)-\gamma\circ\lambda_\infty(s_0-)|\\
&\ge 2\eps-\eps=\eps,
\end{align*}
So, for all $n'>n_0$ there is $s\in(s_0-\delta,s_0+\delta)$ such that
\[
|\gamma_{n'}(s)-\mu(s)|=|\gamma_{n'}(s)-\gamma\circ\lambda_\infty(s)|\ge\eps,
\]
and we obtain a contradiction with the uniform convergence of $\gamma_{n'}$ to $\mu$. This completes the proof that
there is no discontinuities of $\lambda_\infty$ within $(0,1)$. The demonstration showing that $\lambda_\infty$ is
continuous at the endpoints of $[0,1]$ is similar. This completes the proof of part~\ref{it:path_is_a_closed_set}.

Part~\ref{it:equivalent_def} follows immediately from part \ref{it:approx-by-bijections}. The lemma is proven completely.
\end{proof}


\begin{proof}[Proof of Theorem~\ref{thm:metric_space}] Part~1. We notice first that $d$ is nonnegative and symmetric.
To show the triangle inequality, for any three curves $\Gamma_1,\Gamma_2,\Gamma_3$ and $\eps>0$ we use 
part~\ref{it:equivalent_def} of Lemma~\ref{lm:basics-on-paths} to
find
$\gamma_{1,2}\in\Gamma'_1$, $\gamma_{2,1},\gamma_{2,3}\in\Gamma'_2$, and $\gamma_{3,2}\in\Gamma'_3$
with \[|\gamma_{1,2}-\gamma_{2,1}|_\infty<d(\Gamma_1,\Gamma_2)+\eps\] and
\[|\gamma_{3,2}-\gamma_{2,3}|_\infty<d(\Gamma_3,\Gamma_2)+\eps.\] Let us find an  time change $\lambda$ such that
$\gamma_{2,3}=\gamma_{2,1}\circ\lambda$. Then
\begin{align*}
d(\Gamma_1,\Gamma_3)&\le|\gamma_{1,2}\circ\lambda-\gamma_{3,2}|_\infty\\
&\le|\gamma_{1,2}\circ\lambda-\gamma_{2,1}\circ\lambda|_\infty+|\gamma_{2,1}\circ\lambda-\gamma_{3,2}|_\infty\\
&\le|\gamma_{1,2}-\gamma_{2,1}|_\infty+|\gamma_{2,3}-\gamma_{3,2}|_\infty\\
&\le  d(\Gamma_1,\Gamma_2)+d(\Gamma_3,\Gamma_2)+2\eps,
\end{align*}
and the desired inequality follows since $\eps$ is arbitrarily small.

If $d(\Gamma_1,\Gamma_2)=0$, then there are two sequences of paths $\gamma_{1,n}\in\Gamma'_1$,
$\gamma_{2,n}\in\Gamma'_2$ with $\lim_{n\to\infty}|\gamma_{1,n}-\gamma_{2,n}|_\infty=0$. Using appropriate time changes,
we see that there is $\tilde \gamma_1\in\Gamma'_1$ and $\tilde\gamma_{2,n}\in\Gamma'_2, n\in\N$ such that
$\lim_{n\to\infty}|\tilde\gamma_1-\tilde\gamma_{2,n}|=0$. Due to part \ref{it:path_is_a_closed_set} of Lemma~\ref{lm:basics-on-paths}, we see that $\tilde\gamma_1\in\Gamma_2$. Therefore,
$\Gamma_1=\Gamma_2$, and we are done with part~1.

Part 2. The space $\X$ is separable since it inherits a dense countable set from a closed
subset (of functions with nondecreasing zeroth coordinate) of the separable space $C([0,1]\to [0,\infty)\times\R^d)$. 

Let us now prove that $\X$ is complete.
Suppose that $(\Gamma_n)_{n\in\N}$ is a Cauchy sequence in $(\X,\rho)$. Using Lemma~\ref{lm:basics-on-paths} and 
reparametrization, we
can find an increasing number sequence $(n_k)_{k\in\N}$, with $n_k\in\N$, and 
a sequence of paths $(\gamma_k)_{k\in\N}$ with $\gamma_k\in\Gamma'_{n_k}$ such that
$|\gamma_k-\gamma_{k+1}|_\infty<2^{-k}$. This sequence of paths is Cauchy in $C$ 
and, therefore,  converges to a path $\gamma_\infty$ which defines a path $\Gamma_\infty$. Obviously,
$\Gamma_{n_k}$ converges to $\Gamma_\infty$ as $k\to\infty$. One can now use
the triangle inequality to extend this convergence to the whole sequence~$(\Gamma_n)$. So every
Cauchy sequence is convergent, which completes the proof of Theorem~\ref{thm:metric_space}.
\end{proof}
%

\begin{proof}[Proof of Lemma~\ref{lm:graph_coverge_if_continuous_function_converge}] 
We prove the necessity part of the statement since the sufficiency part is obvious. Due to Lemma~\ref{lm:basics-on-paths}, we may assume that there is a sequence of paths
$\gamma_n\in\Gamma_{f_n}'$ such that 
\begin{equation}
\label{eq:uniform_convergence_of_paths}
|\gamma_n-\gamma_g|_\infty\to 0,\quad n\to\infty,
\end{equation} 
where
$\gamma_g$ is defined in~\eqref{eq:gamma_f} .
Therefore,
\[
\sup_{s\in[0,1]}|\gamma_n^0(s)-sT|\to 0,\quad n\to\infty.
\]
This in turn implies that if we define a strictly increasing and continuous function $u_n(s)$ by $\gamma_n^0(u(s))=sT$,
then
\[
\sup_{s\in[0,1]}|u_n(s)-sT|\to 0,\quad n\to\infty.
\]
This, together with~\eqref{eq:uniform_convergence_of_paths} and uniform continuity of $g$ proves that
\[
\sup_{s\in[0,1]}|f_n(sT)-g(sT)|=\sup_{s\in[0,1]}|\gamma_n(u_n(s))-\gamma_g(s)|\to 0,\quad n\to\infty,
\]
and the proof is complete.
\end{proof}

\begin{proof}[Proof of Lemma~\ref{lm:proximity_to_piecewise_constant}] It is straightforward to see that
the path $\tilde\gamma_f$ defined by
\[
\tilde\gamma_f(s)=\begin{cases}(\gamma^0(s),f^1(\gamma^0(s)),\ldots,f^d(\gamma^0(s))),&\gamma^0(s)\in[r_{2j-1},r_{2j}],\ j=1,\ldots,k\\
                             (r_{2j}, f^1(r_{2j}),\ldots,f^d(r_{2j})),&\gamma^0(s)\ge r_{2j},\ s\le s'_{2j},\ j=0,\ldots,k\\
                             (\lambda_j^{-1}(s),f^1(\lambda_j^{-1}(s)),\ldots,f^d(\lambda_j^{-1}(s))),& s\in[s'_{2j},s'_{2j+1}], \ j=0,\ldots,k\\
			     (r_{2j+1}, f^1(r_{2j+1}),\ldots,f^d(r_{2j+1})),&s\ge s'_{2j+1},\ \gamma^0(s)\le r_{2j+1},\ j=0,\ldots,k
			     \end{cases}
\]
belongs to $\Gamma_f$ and
$|\gamma-\tilde\gamma_f|_\infty<3\delta$.
\end{proof}
 
\bibliographystyle{plain}
\bibliography{happydle}

\def\cprime{$'$}
\begin{thebibliography}{10}

\bibitem{Armbruster--Stone--Kirk}
Dieter Armbruster, Emily Stone, and Vivien Kirk.
\newblock Noisy heteroclinic networks.
\newblock {\em Chaos}, 13(1):71--86, 2003.

\bibitem{Bakhtin-SPA:MR2411523}
Yuri Bakhtin.
\newblock Exit asymptotics for small diffusion about an unstable equilibrium.
\newblock {\em Stochastic Process. Appl.}, 118(5):839--851, 2008.

\bibitem{Billingsley:MR1700749}
Patrick Billingsley.
\newblock {\em Convergence of probability measures}.
\newblock Wiley Series in Probability and Statistics: Probability and
  Statistics. John Wiley \& Sons Inc., New York, second edition, 1999.
\newblock A Wiley-Interscience Publication.

\bibitem{Blagoveschenskii:MR0139204}
Ju.~N. Blagove{\v{s}}{\v{c}}enski{\u\i}.
\newblock Diffusion processes depending on a small parameter.
\newblock {\em Teor. Verojatnost. i Primenen.}, 7:135--152, 1962.

\bibitem{FW:MR1652127}
M.~I. Freidlin and A.~D. Wentzell.
\newblock {\em Random perturbations of dynamical systems}, volume 260 of {\em
  Grundlehren der Mathematischen Wissenschaften [Fundamental Principles of
  Mathematical Sciences]}.
\newblock Springer-Verlag, New York, second edition, 1998.
\newblock Translated from the 1979 Russian original by Joseph Sz\"ucs.

\bibitem{Karatzas--Shreve}
Ioannis Karatzas and Steven~E. Shreve.
\newblock {\em Brownian motion and stochastic calculus}, volume 113 of {\em
  Graduate Texts in Mathematics}.
\newblock Springer-Verlag, New York, 1988.

\bibitem{Katok--Hasselblatt}
Anatole Katok and Boris Hasselblatt.
\newblock {\em Introduction to the modern theory of dynamical systems},
  volume~54 of {\em Encyclopedia of Mathematics and its Applications}.
\newblock Cambridge University Press, Cambridge, 1995.
\newblock With a supplementary chapter by Katok and Leonardo Mendoza.

\bibitem{Kifer}
Yuri Kifer.
\newblock The exit problem for small random perturbations of dynamical systems
  with a hyperbolic fixed point.
\newblock {\em Israel J. Math.}, 40(1):74--96, 1981.

\bibitem{Krupa:MR1437986}
M.~Krupa.
\newblock Robust heteroclinic cycles.
\newblock {\em J. Nonlinear Sci.}, 7(2):129--176, 1997.

\bibitem{rabinovich:188103}
Mikhail~I. Rabinovich, Ramon Huerta, and Valentin Afraimovich.
\newblock Dynamics of sequential decision making.
\newblock {\em Physical Review Letters}, 97(18):188103, 2006.

\bibitem{Skorohod:MR0084897}
A.~V. Skorohod.
\newblock Limit theorems for stochastic processes.
\newblock {\em Teor. Veroyatnost. i Primenen.}, 1:289--319, 1956.

\end{thebibliography}
\end{document}